\documentclass[11pt,a4paper]{article}
\usepackage[utf8]{inputenc}
\usepackage[english]{babel}
\usepackage[T1]{fontenc}
\usepackage{lmodern}
\usepackage{amsmath}
\usepackage{accents}
\usepackage{amsfonts}
\usepackage{amssymb}
\usepackage{amsthm}
\usepackage{amscd}
\usepackage{url}
\usepackage{enumitem}
\usepackage{hhline}
\usepackage{dsfont}
\usepackage{array}
\usepackage{geometry}
\usepackage{tikz}
\usetikzlibrary{decorations.pathreplacing}
\usetikzlibrary{patterns}
\usepackage{xcolor}
\usetikzlibrary{arrows}
\usepackage{mdframed}
\usepackage{caption}
\usepackage[titletoc]{appendix}
\usepackage{csquotes}
\usepackage{hyperref}
\usepackage{float}
\usepackage{microtype}
\usepackage{graphicx}

\mdfsetup{nobreak=true}

\AtBeginDocument{
  \label{CorrectFirstPageLabel}
  
}

\usepackage[backend=biber]{biblatex}
\addbibresource{article_final.bib}

\geometry{hmargin=2.5cm,vmargin=2.5cm}

\newcolumntype{L}{>{$}l<{$}}

\newcommand\N{\mathbb{N}}
\newcommand\M{\mathcal{M}}
\newcommand\HH{\mathcal{H}}
\newcommand\J{\mathcal{J}}

\newcommand\R{\mathbb{R}}

\newcommand\void{\text{\O}}

\newcommand\E{\mathbb{E}}
\newcommand\Id{\mathrm{Id}}

\newcommand\dd{\mathrm{d}}
\newcommand\CC{\mathcal{C}}

\newcommand\PP{\mathbb{P}}

\newcommand\Card{\mathrm{Card}}
\DeclareMathOperator\Kol{Kol}

\newcommand\T{\mathbb{T}}

\newcommand{\enstq}[2]{\left\{#1~\middle|~#2\right\}}

\DeclareMathOperator\Vol{Vol}
\DeclareMathOperator\dist{dist}

\DeclareMathOperator\Wass{Wass}
\DeclareMathOperator\Lip{Lip}
\newcommand\one{\mathds{1}}

\newcommand\numberthis{\addtocounter{equation}{1}\tag{\theequation}}
\newcommand\jump{\par\medskip}

\theoremstyle{plain}
\newtheorem{theo}{Theorem}[section]
\newenvironment{theorem}%
  {\begin{mdframed}[backgroundcolor=white]\begin{theo}}%
  {\end{theo}\par\vspace{0.1cm}\end{mdframed}}
  
\theoremstyle{plain}
\newtheorem{coro}[theo]{Corollary}
\newenvironment{corollary}%
  {\begin{mdframed}[backgroundcolor=white]\begin{coro}}%
  {\end{coro}\par\smallskip\end{mdframed}}
  
\theoremstyle{plain}
\newtheorem{lemm}[theo]{Lemma}
\newenvironment{lemma}%
  {\begin{mdframed}[backgroundcolor=white]\begin{lemm}}%
  {\end{lemm}\par\vspace{0.cm}\end{mdframed}}

\theoremstyle{definition}
\newtheorem{remark}[theo]{Remark}

\makeatletter
\def\blfootnote{\gdef\@thefnmark{}\@footnotetext}
\makeatother
  
\begin{document}

\title{\Huge{Almost sure asymptotics for \\ Riemannian random waves}}
\author{Louis Gass}
\maketitle
\blfootnote{Univ Rennes, CNRS, IRMAR - UMR 6625, F-35000 Rennes, France.}
\blfootnote{This work was supported by the ANR grant UNIRANDOM, ANR-17-CE40-0008.}
\blfootnote{Email: louis.gass(at)ens-rennes.fr}

\begin{abstract}We consider the Riemannian random wave model of Gaussian linear combinations of Laplace eigenfunctions on a general compact Riemannian manifold. With probability one with respect to the Gaussian coefficients, we establish that, both for large and small band models, the process properly rescaled and evaluated at an independently and uniformly chosen point $X$ on the manifold, converges in distribution under the sole randomness of $X$ towards an universal Gaussian field as the frequency tends to infinity. This result extends the celebrated central limit Theorem of Salem--Zygmund for trigonometric polynomials series to the more general framework of compact Riemannian manifolds. We then deduce from the above convergence the almost-sure asymptotics of the nodal volume associated with the random wave. To the best of our knowledge, in the real Riemannian case, these asymptotics were only known in expectation and not in the almost sure sense due to the lack of sufficiently accurate variance estimates. This in particular addresses a question of S. Zelditch regarding the almost sure equidistribution of nodal volume.
\end{abstract}

\setcounter{tocdepth}{2}
\tableofcontents\jump

\newpage

\section{Introduction and main results}

\subsection{Introduction}

The central limit theorem by Salem and Zygmund \cite{Sa54} asserts that, when properly rescaled and evaluated at a uniform random point on the circle, a generic real trigonometric polynomial converges in distribution towards a Gaussian random variable. This classical result was recently revisited in \cite{An19} where the authors established both a quantitative version and a functional version of Salem--Zygmund theorem and then use these results to deduce the almost sure asymptotics of the number of zeros of random trigonometric polynomials with symmetric coefficients. 
The goal of the present article is to extend the latter results to the general Riemannian framework and more particularly to the so-called Riemannian random wave model, where random trigonometric polynomials are naturally replaced by random, Gaussian, linear combinations of Laplace eigenfunctions. \jump

The study of nodal sets associated with Laplace eigenfunctions is the object of a vast literature, in particular thanks to Yau's conjecture, see \cite{Ya82,Do88} and \cite{Ma16,Log18,Lo18} for recent breakthroughs. {The introduction of probabilistic models in this context has numerous motivations in mathematical physics among which quantum chaos heuristics \cite{Ze10}, Berry's conjecture \cite{Be77}, which suggests that quantum chaotic eigenfunctions asymptotically behave like Euclidean random waves, and in percolation, as attests the Bogomolny-Schmit conjecture \cite{Bog02}.} The most common probabilistic model then consists in considering random linear combinations of Laplace eigenfunctions, whose coefficients are independent and identically distributed standard Gaussian variables, see for instance \cite{Ru08}, \cite{Wi09} in the case of toral and spherical harmonics or \cite{Ze09} for the case of a general Riemannian manifold. \jump

There is a vast literature on the asymptotic behavior of nodal observables in the complex setting, see for instance \cite{Ze99} and the references therein. In the complex case, the fast decay of the limit covariance kernels appearing in the models provides strong concentration estimates that naturally lead to almost sure asymptotics. There exist a significant technical difference between the complex case and the real case, explaining why the Riemannian analogue of long known results in the complex domain have evaded proof. The covariance kernels in the Riemannian setting are oscillatory and of rather slow decay, while in the complex domain they have exponential decay away from the diagonal. This is the main reason why, in the case of real Riemannian manifolds, most results in the literature concern the asymptotics of nodal observables in expectation \cite{Ca16,Le16}. Coupling Borel--Cantelli Lemma with available concentration/variance estimates, one can then provide almost sure asymptotics, but only along sufficiently decreasing subsequences (see \cite{Ca16}) or Proposition 18 in \cite{Ma18} in the case of the sphere. In this paper, we establish the almost sure asymptotics of the nodal volume associated to Riemannian random waves, without considering a subsequence of eigenvalues.\jump

Indeed, we consider here a generic Gaussian combination of Laplace eigenfunctions and this combination being fixed, we evaluate it at a uniform and independent random point on the manifold. Under the sole randomness of this evaluation point, we then prove that when properly normalized and localized in the neighborhood of the point, the random field statistically converges towards an explicit universal Euclidean random wave, see Section \ref{sec.clt} below for precise statements. This result generalizes Salem--Zygmund's central limit theorem to the Riemannian framework. Our method is inspired by \cite{An19} and makes a crucial use of Weyl type estimates and some decorrelation estimates of the limit field. 
\jump
Starting from a stochastic representation formula of the nodal volume, in the spirit of Bourgain's derandomization technique \cite{Bo14,Bu16}, we then deduce from the above convergence, the almost-sure asymptotics of the nodal volume of a Riemannian random wave towards an explicit universal limit. This last result answers a question raised by S. Zelditch in \cite{Ze09} about the almost sure convergence of random nodal measure. Moreover, it allows to recover and reinforce the asymptotics in expectation obtained so far in the literature, see e.g. \cite{Le16,Ca16}. Note that our approach is only based upon the almost sure convergence in distribution of the random field and some uniform moment bounds, and it does not require any variance nor concentration estimates.

\subsection{Geometric and probabilistic setting}

In order to state our main results, let us describe the geometric and probabilistic contexts and fix our notations.

\subsubsection{Conventions}
{
Let $\N$ be the set of non-negative integers, $\R$ be the set of real numbers and $\R_+$ be the set of non-negative real numbers. Let $f,g,h$ three functions defined on an unbounded subset of $\R_+$. We use the conventional notations $o$, $O$ and $\simeq$ the following way. We say that $f(\lambda) = g(\lambda) + O(h(\lambda))$ if there is a constant $C$ such that 
\[\forall \lambda\in \R_+,\quad|f(\lambda) - g(\lambda)|\leq Ch(\lambda).\]
We say that $f(\lambda) = g(\lambda) + o(h(\lambda))$ if there is a function $\varepsilon(\lambda)$ such that 
\[\forall \lambda\in \R_+,\quad|f(\lambda) - g(\lambda)|\leq \varepsilon(\lambda)h(\lambda)\quad\text{and}\quad \lim_{\lambda\rightarrow+\infty} \varepsilon(\lambda) = 0.\]
At last, we say that $f(\lambda) \simeq g(\lambda)$ if $\lim_{\lambda\rightarrow +\infty}|f(\lambda) - g(\lambda)|=0$.}
\subsubsection{Geometric setting}

Let $(\M,g)$ be a smooth compact manifold without boundary of {dimension $d\geq 2$}. Without loss of generality we will assume that the associated volume measure $\mu$ is normalized i.e. $\mu(\M) = 1$. It is naturally equipped with the Laplace--Beltrami operator denoted $\Delta$. The second order differential operator $\Delta$ is autoadjoint and has compact resolvent. Spectral theory asserts the existence of an orthonormal basis $(\varphi_n)_{n\in\N}$ of eigenfunctions of $\Delta$ associated to the ordered eigenvalues with multiplicities $(-\lambda_n^2)_{n\in\N}$. For all $n\in\N$, we can assume that {$\lambda_n$ is non-negative} and
\[\Delta \varphi_n = -\lambda_n^2 \varphi_n\;\;\;\text{and}\;\;\;\int_\M \varphi_n^2\dd\mu = 1.\]
Given $x,y\in\M$ and $\lambda\in \R_+$, we define 
\[K_\lambda(x,y) = \sum_{\lambda_n\leq\lambda}\varphi_n(x)\varphi_n(y)\quad\text{and}\quad K_\lambda(x) := \sum_{\lambda_n\leq\lambda}\varphi_n^2(x),\numberthis\label{eq:41}\]
the two-point spectral kernel projector on the eigenspace generated by the eigenfunctions up to order $\lambda$. Integrating the function $x\mapsto K_\lambda(x)$ on $\M$ we obtain
\[K(\lambda) := \Card\enstq{n\in\N}{\lambda_n\leq \lambda} = \int_\M K_\lambda(x)\dd\mu(x),\]
the eigenvalue counting function. A fundamental tool in spectral analysis is the local Weyl law, first proved by Hörmander in \cite{Hor68}. It describes the precise asymptotics of the two-point spectral projector. Let $\sigma_d$ be the volume of the unit ball in $\R^d$:
\[\sigma_d = \frac{\pi^{d/2}}{\Gamma\left(\frac{d}{2}+1\right)},\]
and define for $x\in\R^d$ the {function $\mathcal{B}_d:\R\rightarrow\R$} by
\[
\mathcal{B}_d( \|x\|):= \frac{1}{\sigma_d}\int_{|\xi|\leq 1}e^{i\langle x,\xi\rangle}\dd \xi .
\]
It is well-defined since the right-hand since is invariant by rotation. The local Weyl law asserts that uniformly on $x,y\in\M$, as {$\lambda$ goes to infinity},
\[K_\lambda(x,y) = \frac{\sigma_d}{(2\pi)^d}\lambda^d \mathcal{B}_d(\lambda\,\dist(x,y))+O(\lambda^{d-1}).\numberthis\label{eq:24}\]
The limit kernel $\mathcal{B}_d$ only depends on the dimension $d$. It is related to the Bessel function of the first kind $\mathcal{J}$ by the formula
\[ \mathcal{B}_d(\|x\|) = \frac{1}{\sigma_d}\left(\frac{2\pi}{\|x\|}\right)^{d/2}\J_{\frac{d}{2}}(\|x\|).\]
The result of Hörmander goes beyond since the Weyl asymptotics is also true in the $\CC^\infty$ topology. For an arbitrary number of derivatives in $x$ and $y$, one has
\[\partial_{\alpha,\beta} K_\lambda(x,y) = \frac{\sigma_d}{(2\pi)^d}\lambda^d\partial_{\alpha,\beta}\left[ \mathcal{B}_d(\lambda\,\dist(x,y))\right]+O(\lambda^{d+|\alpha|+|\beta|-1}),\numberthis\label{eq:25}\]
and the remainder is also uniform on $x$ and $y$. Taking $x=y$ in the local Weyl law, 
one gets the following classical Weyl law on the number of eigenvalues of magnitude lower than $\lambda$:
\[K_\lambda(x) = \frac{\sigma_d}{(2\pi)^d}\lambda^d + O(\lambda^{d-1})\quad\text{and}\quad K(\lambda) = \frac{\sigma_d}{(2\pi)^d}\lambda^d + O(\lambda^{d-1}),\numberthis\label{eq:35}\]
from which one can deduce a first-order asymptotics for the $n$-th eigenvalue given by
\[\lambda_n \simeq 2\pi\left(\frac{n}{\sigma_d}\right)^{1/d} .\numberthis \label{eq:3}\]

{The function $K_\lambda$ is also known as the \textit{large band kernel} since the sum in \eqref{eq:41} is over the sets of eigenvalues in $[0,\lambda]$. Now let $0<\tau\leq 1/2$ be an exponent. One can also consider a small band setting by defining 
\[k_\lambda(x,y) = \sum_{\lambda_n\in ]\lambda-\lambda^\tau,\lambda]}\varphi_n(x)\varphi_n(y)\quad\text{and}\quad k(\lambda) := \Card\enstq{n\in\N}{\lambda-\lambda^\tau\leq \lambda_n\leq\lambda}.\]
We can translate the local Weyl Law to this setting, at the cost of a worst rest. Precisely, let 
\[\mathcal{S}_d : \|x\| \mapsto \frac{1}{d\sigma_d}\int_{|\xi|= 1}e^{i\langle x,\xi\rangle}\dd \xi = \mathcal{B}_{d-2}(\|x\|).\]
One has the following asymptotics, valid in the $\CC^\infty$ topology:
\[k_\lambda(x,y) = \frac{d\sigma_d}{(2\pi)^d}\lambda^{d+\tau-1} \mathcal{S}_d(\lambda\,\dist(x,y))+O(\lambda^{d-1}).\numberthis\label{eq:26}\]
Note that the model of monochromatic random waves, corresponding to the energy windows $[\lambda,\lambda+1]$ is more challenging, since the rest in the Weyl law is of the same order as the dominant term. On manifolds with no conjugate points , one can prove a refined version of Weyl law that gives a rest of the form $O(\lambda^{d-1}/\log(\lambda))$ in \eqref{eq:35} (see \cite{Ca16}, \cite{Ke19}). It is plausible that a logarithmic decay suffices to prove Theorem \ref{thm4} and Theorem \ref{thm5}, at least in an analytic setting, but the proof would be of very different nature. The model of monochromatic random waves has been thoroughly investigated on the torus and the sphere (see for instance \cite{Wi09,Ru08, Die20}), where explicit calculations can be carried out, and refined versions of the Weyl law are available. We believe that in these particular models, some ideas of this paper can also be adapted in order to prove almost sure results concerning the convergence of nodal measure.}
%

\subsubsection{Probabilistic models}

Let us now describe our main probabilistic models, classically known as the large band and small band Riemannian random waves models. Let us consider $(a_n)_{n\geq 0}$ a sequence of independent and identically distributed standard Gaussian random variables on a probability space $(\Omega_a,\mathcal{F}_a,\PP_a)$. We will denote by $\E_a$ the associated expectation.  
The two models are defined as the following Gaussian combination of eigenfunctions:
\[f_\lambda:x\mapsto \frac{1}{\sqrt{K(\lambda)}}\sum_{\lambda_n\leq \lambda} a_n\varphi_n(x)\quad\text{and}\quad \widetilde{f}_\lambda:x\mapsto \frac{1}{\sqrt{k(\lambda)}}\sum_{\lambda_n\in]\lambda-\lambda^\tau, \lambda]} a_n\varphi_n(x).\]
We could also have introduced an intermediate band regime, see  for instance \cite{Be18}, for which $\lambda_n\in]\alpha\lambda,\lambda]$ and $\alpha\in]0,1[$, but for the simplicity of the statements, we choose to focus on the two ''extreme'' cases defined above. These processes give a probabilistic interpretation of the projector kernels introduced above since they coincide with the covariance kernel of theses processes. For all $x,y\in\M$ we have indeed
\[\E_a[f_\lambda(x)f_\lambda(y)] = \frac{K_\lambda(x,y)}{K(\lambda)}\quad\text{and}\quad\E_a[\widetilde{f}_\lambda(x)\widetilde{f}_\lambda(y)] = \frac{k_\lambda(x,y)}{k(\lambda)}.\]

Consider the canonical Euclidean space  $\R^d$ . For all $x\in\M$ we define
\[I_x : \R^d\longrightarrow T_x\M,\]
an isometry between $\R^d$ and the tangent space at $x$. We only require the mapping $x\mapsto I_x$ to be measurable. For the torus $\T^d$ we can choose for $I_x$ the canonical isometry, but in all generality there is no canonical choice (nor even a continuous choice) of a family $(I_x)_{x\in\M}$. Denoting $\exp_x$ the Riemannian exponential based at $x\in\M$ we define
\[\Phi_x := \exp_x\circ I_x.\]
This map allows us to define a rescaled and flattened version of $f_\lambda$ and $\widetilde{f}_\lambda$ (or any function on $\M$) around some point $x\in\M$ by setting
\begin{align*}
g_\lambda^x : \;\R^d &\longrightarrow \R&\widetilde{g}_\lambda^x : \;\R^d &\longrightarrow \R\\
v &\longrightarrow f_\lambda\left[\Phi_x\left(\frac{v}{\lambda}\right)\right]&v &\longrightarrow \widetilde{f}_\lambda\left[\Phi_x\left(\frac{v}{\lambda}\right)\right].
\end{align*}

In the literature the processes $g_\lambda^x$ and $\widetilde{g}_\lambda^x$ have already been studied, see for instance \cite{Be18, Ca16, Ze09}. Thanks to the Weyl law, they converge in distribution (at a fixed point $x$) towards an isotropic Gaussian process whose covariance function is given by the function $\mathcal{B}_d$ and $\mathcal{S}_d$ respectively. In particular the limit process only depends on the topological dimension $d$ and is independent of the base manifold $\M$. 
\jump

{
Let us now consider a random variable $X$ on another probability space $(\Omega_X,\mathcal{F}_X,\PP_X)$. We denote by $\mathbb P_X$ its distribution and $\E_X$ the associated expectation. We will assume $\PP_X$ to be the uniform distribution, i.e. $\PP_X=\mu$, unless otherwise specified. The random variables $(X,(a_n)_{n\geq 0})$ lives in the product probability space $(\Omega_a\times\Omega_X, \mathcal{F}_a\wedge\mathcal{F}_X,\PP_a\otimes\PP_X)$. More informally, $X$ is a an random variable independent from the sequence $(a_n)_{n\geq 0}$.} For consistency,  Randomizing on the spatial parameter $x$ we define the following processes on $\R^d$ :
\[g_\lambda^X : v\mapsto f_\lambda\left(\Phi_X\left(\frac{v}{\lambda}\right)\right)\quad\text{and}\quad \widetilde{g}_\lambda^X : v\mapsto \widetilde{f}_\lambda\left(\Phi_X\left(\frac{v}{\lambda}\right)\right), \quad v \in \mathbb R^d.\numberthis\label{eq:21}\]

\subsection{Statement of the results and outline of the proofs}

The first main result of the article is the following functional central limit theorem which generalizes \cite[Thm.~3]{An19} to the case of a general compact Riemannian manifold of dimension $d\geq 2$.
\begin{theorem}
\label{thm4}
Let $\M$ be a smooth compact manifold of dimension $d\geq 2$. Almost surely with respect to the probability $\PP_a$, the two processes $(g_\lambda^X(v))_{v\in\R^d}$ and $(\widetilde{g}_\lambda^X(v))_{v\in\R^d}$ converge in distribution under $\PP_X$ with respect to the $\CC^\infty$ topology, towards isotropic Gaussian processes $(g_\infty(v))_{v\in\R^d}$ and $(\widetilde{g}_\infty(v))_{v\in\R^d}$ with respective covariance functions
\[\E_X[g_\infty(u)g_\infty(v)] = \mathcal{B}_d(\|u-v\|)\quad\quad\text{and}\quad\quad \E_X[\widetilde{g}_\infty(u)\widetilde{g}_\infty(v)] = \mathcal{S}_d(\|u-v\|).\]

The result still holds if $X$ is a random variable on $\M$ with a bounded density with respect to the volume measure $\mu$.
\end{theorem}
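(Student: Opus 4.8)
The plan is to establish convergence in distribution of the $\mathcal{C}^\infty$-valued random fields via the classical strategy: (i) convergence of finite-dimensional distributions, together with (ii) tightness in the relevant function space. Since the fields $g_\lambda^X$ and $\widetilde g_\lambda^X$ are \emph{not} Gaussian under $\PP_X$ (the randomness is only in the evaluation point $X$), finite-dimensional convergence toward a Gaussian limit must be obtained through a central limit theorem, and this is where the Salem--Zygmund philosophy of \cite{An19} enters. Fix a finite collection of points $v_1,\dots,v_m\in\R^d$ and (to handle the $\mathcal C^\infty$ topology) a finite collection of multi-indices; it suffices to prove that for every fixed linear combination $Y_\lambda := \sum_{j,\alpha} c_{j,\alpha}\,\partial_\alpha g_\lambda^X(v_j)$ one has $Y_\lambda \to \mathcal N(0,\sigma^2)$ in law under $\PP_X$, for $\PP_a$-almost every realization of $(a_n)$, with $\sigma^2$ read off from the covariance structure given by $\mathcal B_d$ (resp. $\mathcal S_d$). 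The standard route is the \emph{method of moments}: compute $\E_X[Y_\lambda^p]$ for each integer $p$, show that $\PP_a$-almost surely $\E_X[Y_\lambda^p]\to \E[\mathcal N(0,\sigma^2)^p]$, and conclude by the moment-determinacy of the Gaussian.

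The core computation is therefore: $\E_X[Y_\lambda^p]$ is an integral over $\M$ (in the variable $X$) of a $p$-fold product of the field and its derivatives; expanding the field in the eigenbasis turns this into a sum over $p$-tuples of indices $n_1,\dots,n_p$ with $\lambda_{n_i}\le\lambda$, weighted by a product of Gaussian coefficients $a_{n_1}\cdots a_{n_p}$ and an oscillatory integral $\int_\M \prod_i \varphi_{n_i}(\Phi_X(\cdot/\lambda))\,\dd\mu(X)$. One then takes $\E_a$ of this (to identify the almost-sure limit) and controls the $\PP_a$-variance in order to invoke Borel--Cantelli. The key analytic inputs are: the local Weyl law \eqref{eq:24}--\eqref{eq:25} to evaluate the ``diagonal'' contributions (pairings of indices), which produce exactly the covariance kernel $\mathcal B_d$ (resp. $\mathcal S_d$) in the limit; and Weyl-type/decorrelation estimates showing the ``off-diagonal'' terms are negligible. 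This last point is the \emph{main obstacle}: because the Riemannian covariance kernels are only slowly (polynomially) decaying and oscillatory — unlike the exponentially decaying kernels of the complex setting — bounding the fluctuation of $\E_X[Y_\lambda^p]$ around its $\E_a$-expectation requires sharp cancellation estimates for products of eigenfunctions integrated over $\M$, i.e. quantitative versions of the decorrelation of the limit field $g_\infty$. One must show the $\PP_a$-variance of $\E_X[Y_\lambda^p]$ decays like a negative power of $\lambda$ (or at least is summable along a polynomially-growing subsequence), and then upgrade from that subsequence to the full family using the slowly-varying nature of $\lambda\mapsto K(\lambda)$ together with $\mathcal C^0$-continuity estimates of the field in $\lambda$.

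For tightness, I would invoke a Kolmogorov-type criterion in $\mathcal C^\infty$: it is enough to bound, uniformly in $\lambda$ and $\PP_a$-almost surely, moments of the form $\E_X[\,|\partial_\alpha g_\lambda^X(u) - \partial_\alpha g_\lambda^X(v)|^{2q}\,]$ by $C\|u-v\|^{2q}$ on compact sets, for each multi-index $\alpha$ and each large $q$. These bounds again reduce, via the eigenfunction expansion, to the $\mathcal C^\infty$ local Weyl law \eqref{eq:25} (which controls all derivatives of the kernel with explicit $\lambda$-powers exactly cancelling the $\lambda$-rescaling) plus the same moment machinery as above; hypercontractivity / the structure of the Gaussian coefficients $a_n$ keeps the $\PP_a$-moments under control, so another Borel--Cantelli argument makes the tightness bounds hold simultaneously for $\PP_a$-a.e. realization. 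Finally, the extension to an $X$ with bounded density $\rho$ with respect to $\mu$ is essentially free: all the integrals over $\M$ above pick up a factor $\rho(X)\le\|\rho\|_\infty$, which does not affect any of the limits (the diagonal terms still converge by the local Weyl law, now against $\rho\,\dd\mu$, and since the limit is a \emph{fixed} universal Gaussian field independent of where one localizes, the weight integrates out) nor the negligibility of the off-diagonal terms, so the same proof applies verbatim with $\mu$ replaced by $\rho\,\dd\mu$ throughout.
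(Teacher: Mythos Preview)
Your plan is sound and would succeed, but it takes a genuinely different route to finite-dimensional convergence than the paper does. You propose the \emph{method of moments}: for each linear form $Y_\lambda$, show $\E_X[Y_\lambda^p]\to\E[\mathcal N(0,\sigma^2)^p]$ $\PP_a$-a.s.\ by computing $\E_a\E_X[Y_\lambda^p]$ via Wick pairings and bounding $\Var_a(\E_X[Y_\lambda^p])$ through the decorrelation of cross-kernels $K_\lambda(x,y)/K(\lambda)$ at independent points. The paper instead works with \emph{characteristic functions}: it studies $\E_X[e^{iN_\lambda}]$ directly, exploiting that $N_\lambda$ is Gaussian under $\PP_a$ so that $\E_a[e^{iN_\lambda}]=e^{-\E_a[N_\lambda^2]/2}$ is explicit; the fluctuation $\E_a\bigl[|\E_X[e^{iN_\lambda}]-e^{-\E_X[N_\infty^2]/2}|^{2q}\bigr]$ is then bounded (Theorem~\ref{thm1}, Lemmas~\ref{lemme1}--\ref{lemme4}) by expanding over $2q$ independent copies of $X$ and reducing, via an integral Taylor formula, to exactly the same decorrelation estimate you identify (Lemma~\ref{lemme1bis}). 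So the two approaches ultimately hinge on the same analytic input.

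What the paper's route buys: the characteristic function is bounded, which makes the $2q$-th moment estimate cleaner than controlling $\E_a[(\E_X[Y_\lambda^p])^{2q}]$ for every $p$ separately; more importantly, it delivers an explicit \emph{rate} uniform in $v$ and polynomial in $t$ (Theorem~\ref{thm10}, via a Sobolev trick), and this quantitative version is indispensable later in the paper to obtain the negative-moment bound (Lemma~\ref{lemme7}) needed for uniform integrability of the nodal volume. Your moment approach would prove Theorem~\ref{thm4} but would not directly yield that rate. Two minor remarks: your step ``upgrade from a subsequence to the full family'' is unnecessary, since $f_\lambda$ is piecewise constant in $\lambda$ and Borel--Cantelli along $(\lambda_n)_n$ already covers all $\lambda$; and including derivatives in the finite-dimensional marginals is harmless but not needed once tightness is established in $\CC^\infty$. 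Your tightness argument and the bounded-density extension match the paper's.
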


Let us emphasize that in the literature these kind of results are known only under Gaussian expectation. The concentration result obtained in \cite{Ca16} allows the authors to prove a similar result up to a subsequence of polynomial growth. Our result is new in the sense that the sole randomization on the uniform random variable $X$ suffices to recover the asymptotic behavior of $f_\lambda$ (without extracting a subsequence), and open the door to almost-sure results concerning functionals of $f_\lambda$, as demonstrates the next Theorem \ref{thm5} concerning almost-sure asymptotics of the nodal volume.\jump

The proof of Theorem \ref{thm4} is the object of the next Section \ref{sec.clt} and it is based upon convergence of characteristic functions. Taking the expectation under $\PP_a$, the Gaussian framework allows us to make -- technical but explicit -- computations of characteristic functions. By a Borel--Cantelli argument we recover an almost sure convergence under $\PP_a$. The proof could certainly be applied to more general settings as it uses mostly the following two main ingredients :
\begin{itemize}
\item The local Weyl law, which gives the limit distribution of $g_\lambda^x$ (as a Gaussian process) towards the Gaussian process $g_\infty$.
\item The statistical decorrelation of Lemma \ref{lemme1bis}, which roughly states that if $X$ and $Y$ are independent uniform random variables on $\M$, then the associated Gaussian processes $g_\lambda^X$ and $g_\lambda^Y$ statistically decorrelate as $\lambda$ goes to $+\infty$. It is a consequence of the decaying rate of the limit kernel $\mathcal{B}_d$.
\end{itemize}

As usual, a proof of convergence for stochastic processes splits into two parts. The convergence of finite dimensional distributions given by Theorem \ref{thm0}, and a tightness property given by Theorem \ref{thm2}.\jump

The second main result of the article is the following almost-sure asymptotics of the nodal volume associated with the random fields $f_\lambda$ and $\widetilde{f}_\lambda$. Almost surely, for $\lambda$ large enough, the nodal sets $\lbrace f_\lambda=0\rbrace$ and $\lbrace \widetilde{f}_\lambda=0\rbrace$ are random smooth submanifolds of codimension one, {thanks to Bulinskaya Lemma (see \cite[p.~34]{Az09}). We denote by $\HH^{d-1}$ the $(d-1)-$dimensional Hausdorff measure, and we set
\[\delta := (\sigma_d)^{-1/d}\quad\quad\text{and}\quad\quad B := B(0,\delta), \numberthis \label{eq:4}\]
the Euclidean ball centered at zero with radius $\delta$. Recall that the quantity $\sigma_d$ is the volume of the unit ball in $\R^d$. The parameter $\delta$ is chosen such that the ball $B$ has unit volume.}
\begin{theorem}
\label{thm5}
Almost surely with respect to the probability $\PP_a$,
\[\lim_{\lambda\rightarrow+\infty} \frac{\HH^{d-1}(\lbrace f_\lambda=0\rbrace)}{\lambda} = \E_X[\HH^{d-1}(\lbrace g_\infty=0\rbrace\cap {B})],\] 
and
\[\lim_{\lambda\rightarrow+\infty} \frac{\HH^{d-1}(\lbrace \widetilde{f}_\lambda=0\rbrace)}{\lambda} = \E_X[\HH^{d-1}(\lbrace \widetilde{g}_\infty=0\rbrace\cap {B})].\] 
\end{theorem}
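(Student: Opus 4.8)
The plan is to pass from the almost sure convergence in distribution of Theorem \ref{thm4} to convergence of the (normalized) nodal volume, via a Kac--Rice / integral-geometric representation of $\HH^{d-1}(\{f_\lambda=0\})$ together with a uniform integrability argument. First I would localize: cover $\M$ by geodesic balls, pull back the nodal volume through the maps $\Phi_x$, and rescale by $\lambda$. Concretely, by the coarea formula and a change of variables one writes
\[
\frac{\HH^{d-1}(\{f_\lambda=0\})}{\lambda} = \E_X\!\left[\,\HH^{d-1}\big(\{g_\lambda^X=0\}\cap B\big)\,\right] + o(1),
\]
where the error term comes from the Jacobian of $\Phi_X$ being $1+O(\lambda^{-1}\|v\|)$ on the rescaled ball $B$ and from boundary overlap effects in the cover; this is Bourgain's derandomization idea, turning a deterministic volume into an expectation over the random base point $X$. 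The key point is that this identity holds for \emph{every} fixed realization of the coefficients $(a_k)$, so the remaining task is purely about the behavior under $\PP_X$ of the random variable $V_\lambda := \HH^{d-1}(\{g_\lambda^X=0\}\cap B)$.

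Next I would invoke Theorem \ref{thm4}: almost surely in $\PP_a$, $g_\lambda^X \to g_\infty$ in distribution under $\PP_X$ in the $\CC^\infty$ topology. Since the functional $h \mapsto \HH^{d-1}(\{h=0\}\cap B)$ is continuous on the set of functions $h$ having $0$ as a regular value (no critical zeros, and $h$ not vanishing on $\partial B$), and since the limit field $g_\infty$ is a nondegenerate Gaussian field for which this bad set has probability zero (by Bulinskaya's lemma, using nondegeneracy of the distribution of $(g_\infty(v),\nabla g_\infty(v))$ — guaranteed because $\mathcal{B}_d$, resp.\ $\mathcal{S}_d$, is the Fourier transform of a measure with full-dimensional support), the continuous mapping theorem gives $V_\lambda \to \HH^{d-1}(\{g_\infty=0\}\cap B)$ in distribution under $\PP_X$, almost surely in $\PP_a$. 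To upgrade convergence in distribution to convergence of expectations $\E_X[V_\lambda] \to \E_X[\HH^{d-1}(\{g_\infty=0\}\cap B)]$, I would establish a uniform integrability bound: $\sup_\lambda \E_X[V_\lambda^{1+\varepsilon}] < \infty$ almost surely in $\PP_a$ (or even just $\sup_\lambda \E_a \E_X[V_\lambda^2] < \infty$, combined with a Borel--Cantelli/Fatou argument to transfer it $\PP_a$-a.s.). Such a second moment bound follows from a Kac--Rice formula for $V_\lambda^2$: after averaging over $\PP_a$ as well, $\E_a\E_X[V_\lambda^2]$ is controlled by an integral of the two-point Kac--Rice density, which via the Weyl law \eqref{eq:25} and the nondegeneracy of the rescaled covariance uniformly in $\lambda$ (away from the diagonal the kernel $\mathcal{B}_d$ decays, near the diagonal the Gaussian vector is uniformly nondegenerate) is bounded by $C\,\Vol(B)^2$.

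The main obstacle I expect is the uniform integrability step, and more precisely the uniform-in-$\lambda$ nondegeneracy of the covariance structure of $g_\lambda^X$ needed for the Kac--Rice bound: one must rule out that, for the \emph{fixed, frozen} sequence $(a_k)$, the field $g_\lambda^X$ has nearly-degenerate Gram matrix at pairs of points for a non-negligible set of $X$. The clean way around this is to first prove the moment bound after also integrating over $\PP_a$ — where $g_\lambda^X$ becomes, conditionally on $X$, an honest Gaussian field with covariance $K_\lambda(\Phi_X(u/\lambda),\Phi_X(v/\lambda))/K(\lambda)$ to which the Weyl asymptotics \eqref{eq:25} apply uniformly — obtaining $\sup_\lambda \E_a\E_X[V_\lambda^2] < \infty$; then a Fatou/Borel--Cantelli argument promotes $\PP_a$-a.s.\ the distributional convergence to convergence of $\E_X$-expectations along the full sequence. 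A secondary technical point is handling the derivatives and the boundary $\partial B$: one chooses the radius of $B$ generically so that $g_\infty$ a.s.\ does not vanish on $\partial B$, and uses the $\CC^1$ part of the convergence to control the contribution near $\partial B$. Assembling these pieces yields the stated limits, with the same argument applied verbatim to $\widetilde f_\lambda$, $\widetilde g_\lambda^X$, $\widetilde g_\infty$ using \eqref{eq:26} in place of \eqref{eq:24}--\eqref{eq:25}.
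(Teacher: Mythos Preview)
Your overall architecture matches the paper's: the stochastic representation turning $\HH^{d-1}(\{f_\lambda=0\})/\lambda$ into $\E_X[Z_\lambda]$ with $Z_\lambda=\HH^{d-1}(\{g_\lambda^X=0\}\cap B)$ (this is Lemma~\ref{lemme6} and equation~\eqref{eq:22}), the application of Theorem~\ref{thm4} together with continuity of the nodal-volume functional and Bulinskaya's lemma to get $Z_\lambda\Rightarrow Z_\infty$ under $\PP_X$, and the reduction to uniform integrability of $(Z_\lambda)_\lambda$ under $\PP_X$, $\PP_a$-almost surely.

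The genuine gap is in your route to uniform integrability. You propose bounding $\sup_\lambda \E_a\E_X[V_\lambda^2]$ by a two-point Kac--Rice computation (exploiting that, after averaging over $\PP_a$, the field is Gaussian with covariance governed by the Weyl law) and then invoking ``a Fatou/Borel--Cantelli argument'' to transfer this to a $\PP_a$-a.s.\ bound on $\E_X[V_\lambda^{1+\varepsilon}]$. This transfer does not go through: a uniform bound on $\E_a[\E_X[V_\lambda^2]]$ only gives, for each fixed $\lambda$, that $\E_X[V_\lambda^2]$ is moderate with high $\PP_a$-probability, with no summability over $\lambda$ and hence no Borel--Cantelli input; and Fatou points in the wrong direction for the needed upper bound on $\limsup_\lambda \E_X[V_\lambda]$. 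To make Borel--Cantelli bite you would need control of $\E_a\big[(\E_X[V_\lambda^2])^q\big]$ for arbitrarily large $q$, which a two-point Kac--Rice cannot deliver; and higher-point Kac--Rice for the non-Gaussian (under $\PP_X$) field is exactly what is unavailable. The paper flags this explicitly: the process $g_\lambda^X$ is not Gaussian under $\PP_X$, and the Kac--Rice route is ``ill-devised for non-Gaussian processes''.

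The paper's actual mechanism for uniform integrability (Theorem~\ref{thm8}) is quite different and worth noting. It combines (i) a deterministic Crofton-type inequality (Lemma~\ref{lemme12}): if the nodal volume in a small cube is large, then on some segment through a vertex the function has many zeros, so by iterated Rolle the value of $g_\lambda^X$ at that vertex must be small; with (ii) an anti-concentration estimate $\sup_\lambda \E_X[|g_\lambda^X(v)|^{-\nu}]<\infty$ for a small $\nu>0$ (Lemma~\ref{lemme7}), obtained from the \emph{quantitative} rate in Theorem~\ref{thm10} via smooth-Wasserstein and Kolmogorov comparisons (Lemmas~\ref{lemme8} and~\ref{lemme9}). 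Feeding (i) and (ii) into a dyadic grid argument yields the $\PP_a$-a.s.\ bound $\sup_\lambda \E_X[Z_\lambda^{1+\gamma}]\leq C_\gamma(\omega)$ without any Kac--Rice second moment. In short, your outline is right up to and including the continuous-mapping step, but the uniform-integrability step needs the paper's negative-moment/Crofton machinery rather than an averaged Kac--Rice bound plus Fatou.
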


This result improves the result \cite[Thm.~1]{Ze09} or \cite[Thm.~1.1]{Le16} about the convergence of nodal volume in expectation under $\PP_a$. Passing from an almost-sure convergence to a convergence in expectation is a short corollary of our proof (see Corollary \ref{coro1}). This result can even be strengthened to the weak convergence of nodal measure. Let $(\mu_\lambda)_{\lambda>0}$ (resp. $(\tilde{\mu}_\lambda)_{\lambda>0}$) be the (random) nodal measure associated to $(f_\lambda)_{\lambda>0}$ (resp. $(\tilde{f}_\lambda)_{\lambda>0}$). For a continuous function $h:\M\rightarrow\R$,
\[\int_\M h(x)\dd\mu_\lambda(x) :=  \frac{1}{\lambda}\int_{\lbrace f_\lambda = 0\rbrace} h(x)\dd\HH^{d-1}(x)\quad\text{and}\quad \int_\M h(x)\dd\tilde{\mu}_\lambda(x) :=  \frac{1}{\lambda}\int_{\lbrace \tilde{f}_\lambda = 0\rbrace} h(x)\dd\HH^{d-1}(x).\]

\begin{theorem}
\label{thm6}
Almost surely with respect to the probability $\PP_a$, the sequence $(\mu_\lambda)_{\lambda>0}$ converges weakly to the Riemannian volume measure $\mu$, up to an explicit factor.  That is, for every continuous function $h:\M\rightarrow\R$,
\[\lim_{\lambda\rightarrow+\infty} \int_\M h(x)\dd\mu_\lambda(x) = \left(\int_\M h(x)\dd\mu(x)\right)\E_X[\HH^{d-1}(\lbrace g_\infty=0\rbrace\cap {B})],\] 
and
\[\lim_{\lambda\rightarrow+\infty} \int_\M h(x)\dd\tilde{\mu}_\lambda(x) = \left(\int_\M h(x)\dd\mu(x)\right)\E_X[\HH^{d-1}(\lbrace \widetilde{g}_\infty=0\rbrace\cap {B})].\] 
\end{theorem}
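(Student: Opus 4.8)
I would obtain Theorem \ref{thm6} by running the derandomization argument behind Theorem \ref{thm5} with an extra continuous weight; I only discuss $(\mu_\lambda)$, the statement for $(\widetilde{\mu}_\lambda)$ being identical after replacing $f_\lambda,g_\lambda^X,g_\infty$ with $\widetilde{f}_\lambda,\widetilde{g}_\lambda^X,\widetilde{g}_\infty$. First I would reduce to nonnegative weights. By definition $\int_\M h\,\dd\mu_\lambda=\frac1\lambda\int_{\{f_\lambda=0\}}h\,\dd\HH^{d-1}$, so for a constant $c$ the quantity $\int_\M c\,\dd\mu_\lambda=\frac c\lambda\HH^{d-1}(\{f_\lambda=0\})$ converges $\PP_a$-a.s.\ to $c\,\E_X[\HH^{d-1}(\{g_\infty=0\}\cap B)]=(\int_\M c\,\dd\mu)\,\E_X[\HH^{d-1}(\{g_\infty=0\}\cap B)]$ by Theorem \ref{thm5} and $\mu(\M)=1$. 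Writing an arbitrary continuous $h$ as $h=(h+\|h\|_{\infty})-\|h\|_{\infty}$, where the last term is the corresponding constant function, and using linearity (intersecting the underlying full-probability events), it suffices to prove the statement for a fixed nonnegative continuous $h$. If $\int_\M h\,\dd\mu=0$ then $h\equiv 0$ and there is nothing to prove; otherwise I set $c_h:=\int_\M h\,\dd\mu>0$ and introduce the probability measure $\nu:=c_h^{-1}h\,\dd\mu$ on $\M$, which has bounded continuous density with respect to $\mu$.

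Next I would carry out the localization step. Fix $R>0$ so that the Euclidean ball $B=B(0,R)\subset\R^d$ has volume one. For $\lambda$ large enough (larger than $R$ over the injectivity radius of $\M$), the map $v\mapsto\Phi_x(v/\lambda)$ is a diffeomorphism from $B$ onto the geodesic ball $B(x,R/\lambda)$ with metric distortion $1+O(\lambda^{-2})$ uniform in $x\in\M$; hence $\Vol B(x,R/\lambda)=\lambda^{-d}(1+o(1))$ and $\HH^{d-1}(\{f_\lambda=0\}\cap B(x,R/\lambda))=\lambda^{-(d-1)}\HH^{d-1}(\{g_\lambda^x=0\}\cap B)\,(1+o(1))$, uniformly in $x$. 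A Fubini-type averaging over the family $(B(x,R/\lambda))_{x\in\M}$ --- exactly the identity used in the proof of Theorem \ref{thm5} --- together with the uniform continuity of $h$ on the compact $\M$ (so that $h(y)=h(x)+o(1)$ uniformly for $y\in B(x,R/\lambda)$), then gives
\begin{align*}
\frac1\lambda\int_{\{f_\lambda=0\}}h\,\dd\HH^{d-1}
&=\int_\M h(x)\,\HH^{d-1}(\{g_\lambda^x=0\}\cap B)\,\dd\mu(x)+o(1)\\
&=c_h\,\E_X\!\left[\HH^{d-1}(\{g_\lambda^X=0\}\cap B)\right]+o(1),
\end{align*}
where $X$ is now distributed according to $\nu$ and the $o(1)$ is $o(1)$ times the $\PP_a$-almost surely bounded quantity $\HH^{d-1}(\{f_\lambda=0\})/\lambda$ (bounded by Theorem \ref{thm5}), hence $o(1)$ $\PP_a$-a.s.

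Then I would pass to the limit. By Theorem \ref{thm4} in its bounded-density version, $\PP_a$-a.s.\ the process $g_\lambda^X$ with $X\sim\nu$ converges in distribution, in the $\CC^\infty$ topology under $\PP_X=\nu$, to $g_\infty$, whose law is universal and in particular does not depend on the law of $X$. The functional $g\mapsto\HH^{d-1}(\{g=0\}\cap B)$ is continuous on a full-measure set of fields (for instance those having $0$ as a regular value and with $\{g=0\}$ transverse to $\partial B$), and the uniform $L^p$ bounds on $\HH^{d-1}(\{g_\lambda^X=0\}\cap B)$ established while proving Theorem \ref{thm5} remain valid for $X\sim\nu$ (the density being bounded) and provide uniform integrability; therefore $\E_X[\HH^{d-1}(\{g_\lambda^X=0\}\cap B)]\to\E_X[\HH^{d-1}(\{g_\infty=0\}\cap B)]$, the latter constant coinciding with the one in the statement since it does not involve the law of $X$. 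Multiplying by $c_h=\int_\M h\,\dd\mu$ proves Theorem \ref{thm6} for this fixed $h$, on an event of full $\PP_a$-probability depending on $h$. To get the convergence $\PP_a$-a.s.\ for all continuous $h$ at once, I would fix a countable dense family $(h_k)_k\subset\CC(\M)$, keep the full-probability event on which the convergence holds for every $h_k$, and observe that on that event the total masses $\int_\M\dd\mu_\lambda=\HH^{d-1}(\{f_\lambda=0\})/\lambda$ are bounded (Theorem \ref{thm5}); the estimate $|\int_\M h\,\dd\mu_\lambda-\int_\M h_k\,\dd\mu_\lambda|\le\|h-h_k\|_{\infty}\,\HH^{d-1}(\{f_\lambda=0\})/\lambda$ then extends the limit from $(h_k)$ to an arbitrary $h\in\CC(\M)$, which is precisely Theorem \ref{thm6}.

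The main obstacle is the localization step: it requires no new probabilistic input, but one must control the error terms in the Fubini-type averaging and in the replacement of $h(y)$ by $h(x)$ carefully enough that they remain $o(1)$ after multiplication by the (almost surely bounded) normalized nodal volume. This is a routine, though slightly delicate, refinement of the localization identity already used for Theorem \ref{thm5}, now carrying a uniformly continuous weight; everything else is a direct consequence of Theorems \ref{thm4} and \ref{thm5} and the moment bounds entering their proofs.
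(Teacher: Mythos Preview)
Your proposal is correct and follows essentially the same route as the paper: the weighted stochastic representation formula (the paper's Lemma~\ref{lemme6} and Equation~\eqref{eq:23}), the change of measure to a random point with bounded density $c_h^{-1}h\,\dd\mu$, the bounded-density version of Theorem~\ref{thm4} (cf.\ Remark~\ref{rem6}), and the uniform integrability of the nodal volume (Theorem~\ref{thm8}). You are in fact more careful than the paper on two points it leaves implicit: the reduction to nonnegative $h$ (needed for $h$ to be a genuine density), and the separability argument passing from ``for each $h$, $\PP_a$-a.s.'' to ``$\PP_a$-a.s., for all $h$''.
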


This result positively answers the question raised by S. Zelditch in \cite[Cor. 2]{Ze09}, about the asymptotics of random nodal measure. In \cite{Le16} is considered the more general framework of random submanifolds (that is, intersection of independent Riemannian random waves). {It is shown that an analogous statement of Theorem \ref{thm6} for random submanifolds holds in expectation under $\PP_a$. The author believes that the strategy of proof in the present paper could be translated to the framework of \cite{Le16} to show the almost-sure asymptotics of the nodal volume of random submanifolds.}\jump

The right-hand side in Theorem \ref{thm5} can be explicitly computed by the Kac--Rice formula for random fields (see the Remark \ref{rem3}) and have in fact
\[\lim_{\lambda\rightarrow+\infty} \frac{\HH^{d-1}(\lbrace f_\lambda=0\rbrace)}{\lambda} = \frac{1}{\sqrt{\pi}}\frac{1}{\sqrt{d+2}}\frac{\Gamma\left(\frac{d+1}{2}\right)}{\Gamma\left(\frac{d}{2}\right)}\;\;\text{and}\;\;\lim_{\lambda\rightarrow+\infty} \frac{\HH^{d-1}(\lbrace \widetilde{f}_\lambda=0\rbrace)}{\lambda} = \frac{1}{\sqrt{\pi}}\frac{1}{\sqrt{d}}\frac{\Gamma\left(\frac{d+1}{2}\right)}{\Gamma\left(\frac{d}{2}\right)}.\]

\jump

The proof relies on the connection between the nodal volumes of the processes $f_\lambda$ and $g_\lambda^X$ given by Lemma \ref{lemme6} which states that for large $\lambda$,
\[\frac{\HH^{d-1}(\lbrace f_\lambda=0\rbrace)}{\lambda} \simeq \E_X[Z_\lambda],\quad\text{with}\quad Z_\lambda = \HH^{d-1}(\lbrace g_\lambda^X=0\rbrace\cap {B}).\]
By Theorem \ref{thm5} and the continuity of the random nodal volume for the $\CC^1$-topology, the continuous mapping theorem asserts that the nodal volume of $g_\lambda^X$ on the ball $B$, denoted $Z_\lambda$, converges in distribution towards the nodal volume of $g_\infty$ on the ball $B$.\jump

To recover convergence of expectations and thus Theorem \ref{thm5}, it is then sufficient to prove the uniform integrability of the family $(Z_\lambda)_{\lambda>0}$. Unfortunately the process $g_\lambda^X$ is not Gaussian under $\PP_X$, and sufficient conditions for the boundedness of power moments in the literature (as the ones given in \cite{Ar17}) are too restrictive for our purpose. The approach we use to prove finiteness of all positive moments in Theorem \ref{thm8} do not rely on the Kac--Rice formula, ill-devised for non Gaussian processes, but on more geometric considerations. \jump

Thanks to a variant of the Crofton formula given by Lemma \ref{lemme12}, we can relate the nodal volume of $g_\lambda^X$ to the anti-concentration of $g_\lambda^X$ around zero on deterministic points. The anti-concentration bound is given in Lemma \ref{lemme7} by the finiteness of a small negative moment of $g_\lambda^X$. The proof of the existence of a negative moment  uses the explicit rate of convergence of characteristic function given in Lemma \ref{lemme3}. It allows us to rewrite the convergence in term of the so-called smooth Wasserstein distance in Lemma \ref{lemme9} (following the approach in \cite{Ar17}), which is a stronger notion of convergence than the convergence in distribution.\jump

Throughout the different proofs, $C$ will denote a generic constant which does not depend on $\lambda$ nor the sequence $(a_n)_{n>0}$, and $C(\omega)$ will denote a constant which does not depend on $\lambda$ but may depend on the sequence $(a_n)_{n>0}$ (generally, a constant that comes from a Borel--Cantelli argument).\jump

At last, we will prove the above theorems in the long band regime, that is for the process $g_\lambda^X$, but the proofs apply almost verbatim in the small band regime. The minor differences arising between the two cases will be detailed in the proofs.

\section{Salem--Zygmund CLT for Riemannian random waves} \label{sec.clt}

In this section we give the proof of Theorem \ref{thm4}, and a few corollary results which will be of use in the study of the almost sure asymptotics of nodal volume in next Section \ref{sec.nodalvolume}. 
As usual, the proof of the functional convergence splits into the convergence of finite dimensional marginals and some tightness estimates.

\subsection[Finite dimensional convergence and decorrelation estimates]{Finite dimensional convergence and decorrelation estimates}

We first establish a quantitative version of the convergence of the finite dimensional marginals of $g_\lambda^X$ (resp. $\widetilde{g}_\lambda^X$) towards those of $g_\infty$ (resp. $\widetilde{g}_\infty$).We set
\begin{align*}
\eta(\lambda) = \left\lbrace\begin{array}{llll}
1 \quad\text{in the large band regime},\numberthis\label{eq:40}\\
\lambda^{1-\tau} \quad\text{in the small band regime}.\\
\end{array}
\right.
\end{align*}
Fix an integer $p\geq1$, $v=(v_1,\ldots,v_p)\in(\R^n)^p$ and
$t = (t_1,\ldots,t_p)\in\R^p$, and define in the large band regime
\[
N_\lambda(v,t) := \sum_{i=1}^p t_i \, g_\lambda^X(v_i)\quad\text{and}\quad N_\infty(v,t) := \sum_{i=1}^p t_i \, g_\infty(v_i),
\]
and respectively in the small band regime
\[
N_\lambda(v,t) := \sum_{i=1}^p t_i \, \widetilde{g}_\lambda^X(v_i)\quad\text{and}\quad N_\infty(v,t) := \sum_{i=1}^p t_i \, \widetilde{g}_\infty(v_i).
\]
We will omit the dependence in $(v,t)$ of $N_\lambda(v,t)$ and $N_\infty(v,t)$ when appropriate. Note that these linear combinations are Gaussian random variables under $\PP_a$. We prove that the characteristic function of $N_{\lambda}$ under $\mathbb P_X$ converges to the one of $N_{\infty}$ as $\lambda$ goes to infinity.
\begin{theorem}
\label{thm0}
Almost surely with respect to the probability $\PP_a$, in the large band regime,
\[ \forall t\in\R^p,\forall v\in \R^p,\;
\lim_{\lambda \rightarrow +\infty} \mathbb E_X\left[e^{iN_\lambda(v,t)}\right] = \E_X\left[e^{iN_\infty(v,t)}\right]  = \exp \left(-\frac{1}{2}\sum_{i,j=1}^p t_i t_j \mathcal B_d(||v_i -v_j||) \right), 
\]
and in the small band regime,
\[ \forall t\in\R^p,\forall v\in \R^p,\;
\lim_{\lambda \rightarrow +\infty} \mathbb E_X\left[e^{iN_\lambda(v,t)}\right] = \E_X\left[e^{iN_\infty(v,t)}\right]  = \exp \left(-\frac{1}{2}\sum_{i,j=1}^p t_i t_j \mathcal S_d(||v_i -v_j||) \right). 
\]
\end{theorem}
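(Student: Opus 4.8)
The strategy is to fix $t$ and $v$, work with the Gaussian (under $\PP_a$) random variable $N_\lambda = N_\lambda(v,t)$, and study its characteristic function averaged over $X$. Since $N_\lambda$ is a centered Gaussian under $\PP_a$ with variance $\sigma_\lambda^2(X) := \sum_{i,j} t_i t_j\, \E_a[g_\lambda^X(v_i)g_\lambda^X(v_j)]$, conditioning on $X$ gives the pointwise identity $\E_a\!\left[e^{iN_\lambda}\,\middle|\,X\right] = e^{-\sigma_\lambda^2(X)/2}$. Therefore
\[
\E_X\!\left[e^{iN_\lambda}\right] \quad\text{is compared to}\quad \E_X\!\left[e^{-\sigma_\lambda^2(X)/2}\right],
\]
but this is a random object in $\omega$; the cleanest route is the second-moment / Borel--Cantelli method of \cite{An19}. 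Concretely, set $\Psi_\lambda := \E_X\!\left[e^{iN_\lambda}\right]$ (a complex random variable on $(\Omega,\F,\PP_a)$) and compute $\E_a[\Psi_\lambda]$ and $\E_a[|\Psi_\lambda|^2]$. For the first moment, Fubini and the Gaussian computation over the $a_n$ give $\E_a[\Psi_\lambda] = \E_X\!\big[e^{-\frac12\sum_{i,j}t_it_j \frac{K_\lambda(\Phi_X(v_i/\lambda),\Phi_X(v_j/\lambda))}{K(\lambda)}}\big]$, and the $\CC^\infty$ local Weyl law \eqref{eq:24}--\eqref{eq:25} together with $\dist(\Phi_X(v_i/\lambda),\Phi_X(v_j/\lambda)) = \|v_i-v_j\|/\lambda + O(\lambda^{-2})$ forces the covariance matrix to converge (uniformly in $X$) to $(\mathcal{B}_d(\|v_i-v_j\|))_{i,j}$; hence $\E_a[\Psi_\lambda] \to \exp(-\frac12\sum t_it_j\mathcal{B}_d(\|v_i-v_j\|))$, which is exactly the target value.

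The heart of the matter is the variance: I would show $\E_a\big[|\Psi_\lambda - \E_a[\Psi_\lambda]|^2\big] = O(\eta(\lambda)^2/\lambda)$, or more precisely a bound summable along a polynomially-growing subsequence. Writing $|\Psi_\lambda|^2 = \E_{X}\E_{Y}\big[e^{i(N_\lambda(X) - N_\lambda(Y))}\big]$ for two independent copies $X,Y$ of the base point (with $N_\lambda(X),N_\lambda(Y)$ jointly Gaussian under $\PP_a$), taking $\E_a$ produces $\E_{X,Y}\big[e^{-\frac12(\sigma_\lambda^2(X) + \sigma_\lambda^2(Y) - 2c_\lambda(X,Y))}\big]$ where $c_\lambda(X,Y)$ is the $\PP_a$-cross-covariance between $N_\lambda(X)$ and $N_\lambda(Y)$. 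Each entry of this cross-covariance is $\tfrac{1}{K(\lambda)}K_\lambda(\Phi_X(v_i/\lambda),\Phi_Y(v_j/\lambda))$, and by the local Weyl law this is $\mathcal{B}_d(\lambda\,\dist(\Phi_X(v_i/\lambda),\Phi_Y(v_j/\lambda))) + O(1/\lambda)$. The decay of $\mathcal{B}_d$ (polynomial in small dimension, captured by $\eta(\lambda)$) shows that $c_\lambda(X,Y)$ is small in $L^q(\PP_X\otimes\PP_Y)$ once $\dist(X,Y)$ is not too small, while the event $\{\dist(X,Y) \le R/\lambda\}$ has $\PP_X\otimes\PP_Y$-probability $O((R/\lambda)^d)$. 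This is precisely the statistical decorrelation alluded to in Lemma \ref{lemme1bis} of the introduction, and it yields $\E_a[|\Psi_\lambda|^2] = |\E_a[\Psi_\lambda]|^2 + o(1)$, with a quantitative rate; expanding $e^{c_\lambda}$ and controlling the resulting cross terms uniformly is the main technical obstacle, since one must handle both the $\lambda^{-1}$ Weyl remainder and the slow $\mathcal{B}_d$-decay simultaneously in low dimension, which is exactly where the correctives $\eta(\lambda) = \log\lambda$ (resp. $\sqrt\lambda$) enter.

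Finally, Chebyshev's inequality combined with the variance estimate gives, for any fixed $\varepsilon>0$, $\PP_a\big(|\Psi_\lambda - \E_a[\Psi_\lambda]| > \varepsilon\big) = O(\eta(\lambda)^2/(\varepsilon^2\lambda))$; choosing a subsequence $\lambda_m = m^{\alpha}$ with $\alpha$ large enough that $\sum_m \eta(\lambda_m)^2/\lambda_m < \infty$, Borel--Cantelli yields $\Psi_{\lambda_m} \to \exp(-\frac12\sum t_it_j\mathcal{B}_d(\|v_i-v_j\|))$ almost surely in $\PP_a$. Interpolating between consecutive $\lambda_m$ requires a continuity-in-$\lambda$ estimate for $\Psi_\lambda$: since $K(\lambda)$ is piecewise constant and changes at the $\lambda_n$, one controls $\sup_{\lambda\in[\lambda_m,\lambda_{m+1}]}|\Psi_\lambda - \Psi_{\lambda_m}|$ using \eqref{eq:3} (eigenvalue spacing) and a uniform bound on the $a_n$ obtained from a further Borel--Cantelli argument (e.g. $|a_n| \le C(\omega)\sqrt{\log n}$). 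A diagonal argument over a countable dense set of $(t,v)$, together with the continuity in $(t,v)$ of both sides, upgrades this to the simultaneous statement "for all $t,v$" almost surely; the monochromatic case is identical, replacing $K_\lambda,K(\lambda),\mathcal{B}_d$ by $k_\lambda,k(\lambda),\mathcal{S}_d$ and invoking \eqref{eq:26} in place of \eqref{eq:24}, with the $\sqrt\lambda$ corrective in $d=2$ coming from the weaker decay of $\mathcal{S}_d = \mathcal{B}_{d-2}$.
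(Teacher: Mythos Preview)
Your overall architecture---moment estimate under $\PP_a$, decorrelation via two independent base points $X,Y$, then Borel--Cantelli---is exactly the paper's approach, and your identification of Lemma~\ref{lemme1bis} as the heart of the matter is correct. However, there is a genuine gap in the plan, and it stems from restricting yourself to the \emph{second} moment.

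With only the variance bound (which is $O(\eta(\lambda)/\lambda)$, not $O(\eta(\lambda)^2/\lambda)$ as you wrote), Chebyshev gives $\PP_a(|\Psi_\lambda-\E_a\Psi_\lambda|>\varepsilon)=O(\eta(\lambda)/(\varepsilon^2\lambda))$. Since the eigenvalues satisfy $\lambda_n\sim C n^{1/d}$, this is \emph{not} summable along the full sequence $(\lambda_n)_n$, which is why you are forced to pass to a sparse subsequence $\lambda_m=m^\alpha$ and then interpolate. But your proposed interpolation---bounding $\sup_{\lambda\in[\lambda_m,\lambda_{m+1}]}|\Psi_\lambda-\Psi_{\lambda_m}|$ via $|a_n|\le C(\omega)\sqrt{\log n}$ and the H\"ormander sup-bound $\|\varphi_n\|_\infty\le C\lambda_n^{(d-1)/2}$---fails: between $\lambda_m$ and $\lambda_{m+1}$ there are roughly $m^{\alpha d-1}$ eigenvalues, and the crude triangle inequality on the added terms yields a bound of order $m^{\alpha d-1+\alpha(d-1)/2-\alpha d/2}\sqrt{\log m}=m^{\alpha(d-1/2)-1}\sqrt{\log m}$, which diverges whenever $\alpha>2/(2d-1)$, in particular for the values $\alpha>1$ you need for summability.

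The paper sidesteps this entirely by computing the full $2q$-th moment $\Delta_\lambda^{(q)}=\E_a\big[|\Psi_\lambda-e^{-\frac12\E[N_\infty^2]}|^{2q}\big]$ and showing $\Delta_\lambda^{(q)}\le C(1+\|t\|)^{4q}(\eta(\lambda)/\lambda)^q$ for \emph{every} $q$ (Theorem~\ref{thm1} and Lemma~\ref{lemme1}). Markov at order $2q$ then gives a summable bound along the full eigenvalue sequence $(\lambda_n)$ once $q>d/(2\varepsilon)$, so no interpolation across macroscopic gaps is needed---only the trivial passage between consecutive eigenvalues, where $f_\lambda$ is constant and the argument $v/\lambda$ moves by $O(\lambda^{-2})$. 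The higher-moment bound requires expanding the product over $2q$ independent copies $X_1,\dots,X_{2q}$ via an integral Taylor formula (Lemma~\ref{lemme4}) and a combinatorial tree argument (Lemma~\ref{lemme5}) to reduce everything to powers of the pairwise decorrelation of Lemma~\ref{lemme1bis}; this is the step you would need to add.

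Your diagonal-plus-continuity strategy for upgrading to ``for all $t,v$'' is a legitimate alternative to the paper's route, which instead proves a quantitative uniform bound $\sup_{v\in K}|\Psi_\lambda(v,t)-e^{-\frac12\E[N_\infty^2]}|\le C(\omega)(1+\|t\|^{2+\varepsilon})\lambda^{-1/2+\varepsilon}\sqrt{\eta(\lambda)}$ via a Sobolev embedding $W^{d+1,1}\hookrightarrow L^\infty$ in the $v$-variable (Theorem~\ref{thm10}). Your approach is softer but would work; the paper's is heavier but yields an explicit rate needed later for the negative-moment estimates.
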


Since $N_\lambda(v,t)$ is a Gaussian random variable under $\PP_a$, the explicit formula for the characteristic function of a Gaussian variable gives
\[\E_a\left[e^{iN_\lambda(v,t)}\right] = e^{-\frac{1}{2}\E[N_\lambda(v,t)^2]},
\quad\text{and similarly,}\quad
\E_X\left[e^{iN_\infty(v,t)}\right] = e^{-\frac{1}{2}\E[N_\infty(v,t)^2]}.\]
In order to quantify the convergence rate, for any integer $q>0$, we set
\begin{equation}
\begin{array}{ll}
\Delta_\lambda^{(q)}  & := \E_a\left[\left|\E_X\left[e^{iN_\lambda(v,t)}\right] - \E_X\left[e^{iN_\infty(v,t)}\right]\right|^{2q}\right].
\end{array}\label{eq:1}
\end{equation}
Let $K$ be a compact subset of $\R^d$. In the following we will assume that the vectors $v_1,\ldots,v_p$ belong to $K$. Recall the definition of $\eta(\lambda)$ in \eqref{eq:40}.
\begin{theorem}
\label{thm1}
There is a constant $C$ depending only on $\M$, $K$ and $q$, such that
\begin{equation}\label{eq.decor}
\Delta_\lambda^{(q)} \leq C(1+\|t\|)^{4q}\left(\frac{\eta(\lambda)}{\lambda}\right)^q.
\end{equation}
\end{theorem}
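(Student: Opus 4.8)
The quantity $\Delta_\lambda^{(q)}$ is a $2q$-th moment under $\PP_a$ of the difference of two Gaussian characteristic functions, and the key point is that both characteristic functions are deterministic exponentials of (random, resp. deterministic) quadratic forms in $t$. Writing $V_\lambda := \E_a[N_\lambda(v,t)^2 \mid X]$ (the conditional variance given $X$, which under $\PP_a$ is a random variable depending on $X$) and $V_\infty := \E[N_\infty(v,t)^2] = \sum_{i,j} t_i t_j \mathcal{B}_d(\|v_i-v_j\|)$, we have
\[
\E_X\!\left[e^{iN_\lambda}\right] = \E_X\!\left[e^{-\frac12 V_\lambda}\right], \qquad \E_X\!\left[e^{iN_\infty}\right] = e^{-\frac12 V_\infty},
\]
so that $\left|\E_X[e^{iN_\lambda}] - \E_X[e^{iN_\infty}]\right| \le \E_X\!\left[\left|e^{-\frac12 V_\lambda} - e^{-\frac12 V_\infty}\right|\right] \le \tfrac12\,\E_X\!\left[\,|V_\lambda - V_\infty|\,\right]$, using $|e^{-a}-e^{-b}|\le|a-b|$ for $a,b\ge0$. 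Hence by Jensen
\[
\Delta_\lambda^{(q)} \le \frac{1}{2^{2q}}\,\E_a\!\left[\E_X\!\left[\,|V_\lambda - V_\infty|\,\right]^{2q}\right] \le \frac{1}{2^{2q}}\,\E_a\E_X\!\left[\,|V_\lambda - V_\infty|^{2q}\,\right].
\]
So everything reduces to an $L^{2q}(\PP_a\otimes\PP_X)$ estimate on $V_\lambda - V_\infty$.

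\textbf{Expanding the quadratic form.} Expanding, $V_\lambda = \sum_{i,j=1}^p t_i t_j\, r_\lambda(X; v_i, v_j)$ where $r_\lambda(x;u,w) := \E_a[g_\lambda^x(u)g_\lambda^x(w)] = K_\lambda(\Phi_x(u/\lambda),\Phi_x(w/\lambda))/K(\lambda)$ is itself deterministic in the $a$'s; and $V_\infty = \sum_{i,j} t_i t_j \mathcal{B}_d(\|v_i - v_j\|)$. Therefore
\[
V_\lambda - V_\infty = \sum_{i,j=1}^p t_i t_j\,\big(r_\lambda(X;v_i,v_j) - \mathcal{B}_d(\|v_i-v_j\|)\big),
\]
and since there are $p^2$ terms with $|t_it_j|\le\|t\|^2$, it suffices to bound, uniformly over $u,w\in K$,
\[
\E_X\!\left[\,\big|r_\lambda(X;u,w) - \mathcal{B}_d(\|u-w\|)\big|^{2q}\,\right].
\]
Note there is no randomness in the $a$'s left here, so the outer $\E_a$ is trivial; the factor $(1+\|t\|)^{4q}$ in the target comes out as $(\|t\|^2)^{2q}$ times constants. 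The heart of the matter is thus a pointwise-in-$(u,w)$, $2q$-th moment in $X$ of the Weyl-law remainder $r_\lambda(X;u,w) - \mathcal{B}_d(\|u-w\|)$.

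\textbf{Controlling the remainder via the local Weyl law.} Here we invoke the $\CC^\infty$ local Weyl law \eqref{eq:24}--\eqref{eq:25} together with \eqref{eq:35}. For fixed $u,w\in K$ and any $x\in\M$, since $\dist(\Phi_x(u/\lambda),\Phi_x(w/\lambda)) = \|u-w\|/\lambda + O(\lambda^{-3})$ (the exponential map is a radial isometry, with the distortion of distances between two nearby points being of higher order — here one uses that $u,w$ stay in a fixed compact $K$), the local Weyl law gives
\[
r_\lambda(X;u,w) = \frac{\frac{\sigma_d}{(2\pi)^d}\lambda^d\,\mathcal{B}_d(\lambda\cdot\dist(\Phi_X(u/\lambda),\Phi_X(w/\lambda))) + O(\lambda^{d-1})}{\frac{\sigma_d}{(2\pi)^d}\lambda^d + O(\lambda^{d-1})} = \mathcal{B}_d(\|u-w\|) + O\!\left(\frac{\eta(\lambda)}{\lambda}\right),
\]
where the $O(\cdot)$ is uniform in $x\in\M$ and in $u,w\in K$, and the precise shape of $\eta(\lambda)$ accounts for the small-dimension corrections: in the large band regime the $O(\lambda^{d-1})$ remainders divided by $\lambda^d$ give $O(1/\lambda)$ when $d\ge2$, while for $d=1$ one must be more careful about the behaviour of $\mathcal{B}_1$ (which is $\mathrm{sinc}$-like and decays only like $\|x\|^{-1/2}$ in general dimension, but the issue in $d=1$ is the slow decay producing a $\log\lambda$ factor when one needs uniformity — this is exactly the Salem--Zygmund logarithmic loss), and in the monochromatic regime one uses \eqref{eq:26} in place of \eqref{eq:24}, with the $\sqrt\lambda$ appearing for $d=2$. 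Since this bound is uniform in $X$, the $2q$-th moment in $X$ is bounded by the same quantity to the power $2q$:
\[
\E_X\!\left[\,\big|r_\lambda(X;u,w) - \mathcal{B}_d(\|u-w\|)\big|^{2q}\,\right] \le C\left(\frac{\eta(\lambda)}{\lambda}\right)^{2q}.
\]

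\textbf{Conclusion and the main obstacle.} Combining: $\E_X[|V_\lambda-V_\infty|^{2q}] \le C\, p^{4q}\,\|t\|^{4q}\,(\eta(\lambda)/\lambda)^{2q}$, and then $\Delta_\lambda^{(q)} \le C (1+\|t\|)^{4q} (\eta(\lambda)/\lambda)^{2q}$, which is even better than \eqref{eq.decor} (power $2q$ rather than $q$) — the stated exponent $q$ is presumably chosen to leave room, or the authors' actual argument keeps the characteristic-function structure (with cancellations between $e^{iN_\lambda}$ and $e^{iN_\infty}$ giving decorrelation across distinct $X,Y$) rather than the crude $|e^{-a}-e^{-b}|\le|a-b|$ bound, in which case one genuinely only gets the square root and hence exponent $q$. \emph{The main obstacle} is making the dimension-dependent remainder analysis rigorous and uniform: one must track how the $O(\lambda^{d-1})$ error in the Weyl law, divided by the $\lambda^d$ normalization, interacts with the decay rate of $\mathcal{B}_d$ (resp. $\mathcal{S}_d$) when $u,w$ are allowed to range over a fixed compact $K$ but $\|u-w\|$ could be as large as $\mathrm{diam}(K)$; and crucially to justify that the distance distortion $\dist(\Phi_x(u/\lambda),\Phi_x(w/\lambda)) - \|u-w\|/\lambda$ is genuinely $O(\lambda^{-2})$ or better \emph{uniformly in $x\in\M$}, using compactness of $\M$ to get uniform bounds on the curvature and on the derivatives of the exponential map — this uniformity in the base point, rather than the moment computation itself, is where the real work lies. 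In dimensions $d=1$ (large band) and $d=2$ (monochromatic) the slower decay of the limit kernel forces the extra $\log\lambda$ resp. $\sqrt\lambda$ factors, exactly as in the classical Salem--Zygmund setting, and this borderline case should be treated separately.
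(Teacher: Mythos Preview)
There is a genuine error at the very first step. You write
\[
\E_X\!\left[e^{iN_\lambda}\right] = \E_X\!\left[e^{-\frac12 V_\lambda}\right],
\]
but this is false: the right-hand side is \emph{deterministic} in the Gaussian coefficients $(a_n)$ (since $V_\lambda = \E_a[N_\lambda^2\mid X]$ does not depend on the $a_n$), whereas the left-hand side is random in the $a_n$. What is true is $\E_a[e^{iN_\lambda}] = e^{-\frac12 V_\lambda}$, hence $\E_a\E_X[e^{iN_\lambda}] = \E_X[e^{-\frac12 V_\lambda}]$. You have swapped the roles of $\E_X$ and $\E_a$. As a sanity check: if your identity held, then $\Delta_\lambda^{(q)}$ would be a deterministic number with a trivial outer $\E_a$, which you yourself note (``no randomness in the $a$'s left'') --- this is the red flag.

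What your argument actually bounds is the \emph{bias} term
\[
\left|\E_a\E_X\!\left[e^{iN_\lambda}\right] - \E_X\!\left[e^{iN_\infty}\right]\right|^{2q} = \left|\E_X\!\left[e^{-\frac12 V_\lambda}\right] - e^{-\frac12 V_\infty}\right|^{2q},
\]
and for this your Weyl-law argument is essentially correct and even gives $O(\lambda^{-2q})$. But the quantity $\Delta_\lambda^{(q)}$ also contains the \emph{fluctuation} term
\[
\widetilde{\Delta}_\lambda^{(q)} := \E_a\!\left[\left|\E_X\!\left[e^{iN_\lambda}\right] - \E_a\E_X\!\left[e^{iN_\lambda}\right]\right|^{2q}\right],
\]
and this is where all the work lies. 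The paper handles $\widetilde{\Delta}_\lambda^{(q)}$ by introducing $2q$ independent copies $X_1,\dots,X_{2q}$ of $X$, expanding the $2q$-th moment, and using the decorrelation estimate (Lemma~\ref{lemme1bis}): for independent uniform $X,Y$, $\E_X\big[\big|\E_a[N_\lambda^X N_\lambda^Y]\big|\big] \le C\|t\|^2 \eta(\lambda)/\lambda$. This decorrelation is driven by the decay of $\mathcal{B}_d$ (resp.\ $\mathcal{S}_d$) at infinity, evaluated at $\lambda\cdot\dist(X,Y)$ with $X,Y$ typically far apart --- not by the near-diagonal Weyl remainder you focus on. Each pair of copies contributes one factor $\eta(\lambda)/\lambda$, giving the exponent $q$ (not $2q$): this is why the stated bound is sharp in its exponent, not ``chosen to leave room''. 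Your identification of the main obstacle (uniformity of the exponential-map distortion) is therefore misplaced; the real obstacle is controlling the $\PP_a$-fluctuations of $\E_X[e^{iN_\lambda}]$, which requires the off-diagonal decay of the limit kernel.
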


The proof of Theorem \ref{thm0} is a direct consequence of the second assertion in Theorem \ref{thm10} at the end of this section, but observe that Theorem \ref{thm1} implies a weak version of Theorem \ref{thm0} and gives the core idea of the proof. Indeed, let us recall from Equation \eqref{eq:3} that the sequence $(\lambda_n)_{n\geq0}$ of eigenvalues grows as $Cn^{1/d}$. Fix some $t\in\R^p$ and let $\varepsilon>0$ be a small parameter. Markov inequality implies that
\begin{align*}
\PP_a\left(\left|\E_X\left[e^{iN_{\lambda_n}(v,t)}\right] - \E_X\left[e^{iN_\infty(v,t)}\right]\right|>\lambda_n^{\varepsilon}\sqrt{\frac{\eta(\lambda_n)}{\lambda_n}}\right)&\leq \frac{\Delta_{\lambda_n}^{(q)}}{\lambda_n^{2q\varepsilon}}\left(\frac{\lambda_n}{\eta(\lambda_n)}\right)^q = O\left(n^{-2q\varepsilon/d}\right).
\end{align*}
For $q>d/(2\varepsilon)$, the left-hand term is summable and Borel--Cantelli Lemma implies the existence a constant $C(\omega,v,t)$ such that
\[
\left|\E_X\left[e^{iN_\lambda(v,t)}\right] - \E_X\left[e^{iN_\infty(v,t)}\right]\right|\leq C(\omega,v,t)\frac{\sqrt{\eta(\lambda)}}{\lambda^{\frac{1}{2}-\varepsilon}}.\numberthis\label{eq:38}
\]
In particular, for a fixed $t\in\R^p$ and $v\in \R^p$, this proves the convergence in distribution of $N_\lambda(v,t)$ towards $N_\infty(v,t)$, almost surely with respect to the probability $\PP_a$. Note that Theorem \ref{thm0} states that the convergence holds almost surely under $\PP_a$, simultaneously for all $t\in\R^d$ and $v\in K^p$, and thus requires the inversion of quantifiers. We deal with this issue in Theorem \ref{thm10} at the end of Section \ref{sec.clt}, which makes explicit the dependence of $C(\omega,v,t)$ in Equation \eqref{eq:38} with respect to $v$ and $t$.
\jump
\begin{proof}[Proof of Theorem \ref{thm1}]
Define
\begin{equation}
\widetilde{\Delta}_\lambda^{(q)} := \E_a\left[\left|\E_X\left[e^{iN_\lambda(v,t)}\right] - \E_a\E_X\left[e^{iN_\lambda(v,t)}\right]\right|^{2q}\right] .\label{eq:0}
\end{equation}
By triangular inequality, we have
\[
\Delta_\lambda^{(q)} \leq 4^q\left(\widetilde{\Delta}_\lambda^{(q)} + \left|e^{-\frac{1}{2}\E_X[N_\infty(v,t)^2]}-\E_X\left[e^{-\frac{1}{2}\E_a[N_\lambda(v,t)^2]}\right]\right|^{2q}\right).
\]
Using the $1-$Lipschitz regularity of $x \mapsto e^{-x}$, we then get
\[
\Delta_\lambda^{(q)} \leq 4^q \widetilde{\Delta}_\lambda^{(q)}  + 4^{q-1}\left|\E_X\left[N_\infty(v,t)^2\right]-\E_X\E_a\left[N_\lambda(v,t)^2\right]\right|^{2q} \numberthis \label{eq.delta0}.
\]
The last term in Equation \eqref{eq.delta0} can be evaluated as follows. The following direct computation is done is the large band regime with limit kernel $\mathcal{B}_d$, but it remain true in the small band regime with limit kernel $\mathcal{S}_d$. We have first
\begin{align*}
\E_a[N_\lambda^2] &= \E_a\left[\left(\sum_{i=1}^pt_i g_\lambda^X(v_i)\right)^2\right] \\
&= \sum_{i,j=1}^pt_it_j \frac{1}{K(\lambda)}\sum_{\lambda_n\leq \lambda} \varphi_n\left[\Phi_X\left(\frac{v_i}{\lambda}\right)\right]\varphi_n\left[\Phi_X\left(\frac{v_j}{\lambda}\right)\right]\\
&= \sum_{i,j=1}^pt_it_j \frac{K_\lambda\left(\Phi_X\left(\frac{v_i}{\lambda}\right),\Phi_X\left(\frac{v_j}{\lambda}\right)\right)}{K(\lambda)}.
\end{align*}
Using Weyl law and the fact that $v$ lives in a compact set, we obtain
\[
\begin{array}{l}
\displaystyle{\left|\E_a\left[N_\lambda^2\right]-\E_X[N_\infty^2]\right| \leq \sum_{i,j=1}^p |t_i||t_j|\left|\frac{K_\lambda\left(\Phi_X\left(\frac{v_i}{\lambda}\right),\Phi_X\left(\frac{v_j}{\lambda}\right)\right)}{K(\lambda)}- \mathcal{B}_d(\|v_i-v_j\|)\right|}\\
\displaystyle{\leq \sum_{i,j=1}^p|t_i||t_j|\left| \mathcal{B}_d\left[\lambda\dist\left( \Phi_X\left(\frac{v_i}{\lambda}\right),\Phi_X\left(\frac{v_j}{\lambda}\right)\right)\right]-\mathcal{B}_d(\|v_i-v_j\|)\right| + \|t\|^2O\left(\frac{\eta(\lambda)}{\lambda}\right)}\\
\displaystyle{\leq \sum_{i,j=1}^p|t_i||t_j| \left|\lambda\dist\left(\Phi_X\left(\frac{v_i}{\lambda}\right),\Phi_X\left(\frac{v_j}{\lambda}\right)\right)-\|v_i-v_j\|\right| + \|t\|^2O\left(\frac{\eta(\lambda)}{\lambda}\right)}.
\end{array}
\]
The last line is justified  by the fact that the function $\mathcal{B}_d$ is Lipschitz continuous. The differential of the exponential map at $0$ is the identity, which implies the following asymptotic, uniformly on $v$ in a compact subset:
\[\left|\lambda\dist\left(\Phi_X\left(\frac{v_i}{\lambda}\right),\Phi_X\left(\frac{v_j}{\lambda}\right)\right)-\|v_i-v_j\|\right|=  O\left(\frac{1}{\lambda}\right),\]
and we deduce
\[\left|\E_a\left[N_\lambda^2\right]-\E_X[N_\infty^2]\right| = \|t\|^2O\left(\frac{\eta(\lambda)}{\lambda}\right).\]
Injecting this estimate in Equation \eqref{eq.delta0}, we get
\[
\Delta_\lambda^{(q)} \leq 4^q \widetilde{\Delta}_\lambda^{(q)}  + \|t\|^{4q}O\left(\left(\frac{\eta(\lambda)}{\lambda}\right)^{2q}\right).
\]
The conclusion of Theorem \ref{thm1} then follows from the following lemma.
\end{proof}

\begin{lemma}
\label{lemme1}
There is a constant $C$ depending only on $\M$, $K$ and $q$, such that
\[
\widetilde{\Delta}_\lambda^{(q)} \leq C(1+\|t\|^{4q})\left(\frac{\eta(\lambda)}{\lambda}\right)^q.\numberthis \label{eq:6}\]
\end{lemma}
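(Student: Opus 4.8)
The plan is to expand $\widetilde{\Delta}_\lambda^{(q)}$ by introducing an independent copy of the randomizing point. Writing $F_\lambda(X) := e^{iN_\lambda(v,t)}$ as a function of $X$ (with the Gaussian coefficients frozen), we have $\E_X[F_\lambda(X)] - \E_a\E_X[F_\lambda(X)]$, whose $2q$-th absolute moment under $\PP_a$ can be written, after introducing $2q$ i.i.d.\ copies $X_1,\dots,X_{2q}$ of $X$ (independent of the $a_n$'s), as
\[
\widetilde{\Delta}_\lambda^{(q)} = \E_{X_1,\dots,X_{2q}}\left[\E_a\prod_{k=1}^{q} F_\lambda(X_k)\overline{F_\lambda(X_{q+k})} - \text{(product of marginals)}\right].
\]
For each fixed tuple $(X_1,\dots,X_{2q})$, the quantity $\sum_{k} \bigl(N_\lambda^{(X_k)} - N_\lambda^{(X_{q+k})}\bigr)$ is, under $\PP_a$, a centered Gaussian, so $\E_a$ of the product of exponentials is $\exp(-\tfrac12 \Var_a(\cdot))$, an explicit function of the two-point spectral kernels $K_\lambda(\Phi_{X_k}(v_i/\lambda),\Phi_{X_\ell}(v_j/\lambda))/K(\lambda)$. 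The idea (as in \cite{An19}) is that when the points $X_k$ and $X_\ell$ ($k\le q < \ell$) are far apart on $\M$ — which happens with overwhelming $\PP_X$-probability — the cross-covariances are small by the decay of $\mathcal{B}_d$ (resp.\ $\mathcal{S}_d$), so the Gaussian expectation factorizes up to a small error; the pairs that are close contribute a small $\PP_X$-measure.

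Concretely, the key steps in order: (i) condition on $(X_k)$, apply the Gaussian characteristic-function formula under $\PP_a$, and Taylor-expand $\exp(-\tfrac12 \Var_a)$ around the ``decoupled'' value to isolate the cross terms $C_{k\ell}(\lambda) := \sum_{i,j} t_i t_j \, \mathcal{B}_d\bigl(\lambda\,\dist(\Phi_{X_k}(v_i/\lambda),\Phi_{X_\ell}(v_j/\lambda))\bigr) + O(\|t\|^2/\lambda)$ via the local Weyl law \eqref{eq:24}; (ii) bound $|C_{k\ell}(\lambda)|\le C\|t\|^2\,\bigl(1+\lambda\,\dist(X_k,X_\ell)\bigr)^{-(d-1)/2}$ using the Bessel asymptotics $|\mathcal{B}_d(r)|, |\mathcal{S}_d(r)| = O(r^{-(d-1)/2})$ together with $\dist(\Phi_{X_k}(v_i/\lambda),\Phi_{X_\ell}(v_j/\lambda)) \geq \dist(X_k,X_\ell) - C/\lambda$; (iii) integrate the resulting bound over $(X_k,X_\ell)$ uniform on $\M$, i.e.\ estimate $\int_{\M\times\M} \bigl(1+\lambda\,\dist(x,y)\bigr)^{-(d-1)/2}\,\dd\mu(x)\,\dd\mu(y)$. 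This integral is $O(\lambda^{-1})$ when $d\ge 3$, $O(\lambda^{-1/2}) = O(\sqrt{\lambda}/\lambda)$ when $d=2$ in the monochromatic regime (where the exponent is $(d-1)/2 = 1/2$, borderline), and $O(\log\lambda/\lambda)$ in the large-band $d=1$ case (where one uses the $\mathcal{B}_1$ decay rate $O(1/r)$), precisely matching the definition \eqref{eq:40} of $\eta(\lambda)$; (iv) collect the combinatorial factors from the $q$-fold product (there are $O(q^2)$ cross pairs and the higher-order terms in the Taylor expansion of $\exp$ carry extra powers of $\|t\|^2$, producing the overall $(1+\|t\|^{4q})$ and $(\eta(\lambda)/\lambda)^q$).

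The main obstacle I anticipate is step (iii)--(iv): one must be careful that expanding the product of $q$ exponentials and the $\exp(-\tfrac12\Var_a)$ does not simply give a single cross pair but a sum over many terms, and for each term with several ``close'' pairs one needs a multilinear Weyl-type estimate — i.e.\ controlling $\E_X$ of products like $\prod |C_{k_m \ell_m}(\lambda)|$ over disjoint pairs, which by independence of the $X_k$'s factorizes into single-pair integrals, each contributing a factor $\eta(\lambda)/\lambda$, so that a term involving $j$ cross pairs contributes $(\eta(\lambda)/\lambda)^j$; one then checks the dominant contribution is $j=q$ (maximal pairing) after accounting for the fact that terms with fewer pairs come with higher powers of $\lambda$ in the denominator or smaller — this bookkeeping, combined with keeping the $\|t\|$-dependence polynomial, is the delicate part. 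A secondary subtlety is the borderline dimension $d=2$ monochromatic case and $d=1$ large-band case, where the naive integral of the squared kernel diverges and one genuinely needs the logarithmic (resp.\ $\sqrt\lambda$) correction, so the estimate on $\int (1+\lambda\dist)^{-(d-1)/2}$ must be done sharply rather than crudely.
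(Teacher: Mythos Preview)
Your strategy --- introduce $2q$ i.i.d.\ copies of $X$, compute the $\PP_a$-expectation via the Gaussian characteristic function, and exploit the decay of $\mathcal{B}_d$ (resp.\ $\mathcal{S}_d$) through the integral $\int_\M |\mathcal{B}_d(\lambda\dist(x,y))|\,\dd\mu(x)$ --- is exactly the paper's. But the two steps you yourself flag as delicate are genuinely incomplete as written, and the paper supplies a specific device for each.

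First, your displayed identity for $\widetilde{\Delta}_\lambda^{(q)}$ is not correct for $q\ge 2$: with $Z=\E_X[F_\lambda(X)]$, the centered moment $\E_a[|Z-\E_a Z|^{2q}]$ is \emph{not} $\E_{\textbf{X}}\bigl[\E_a\prod F_\lambda(X_k)\overline{F_\lambda(X_{q+k})}\bigr]$ minus the product of $\E_a$-marginals (that difference equals $\E_a|Z|^{2q}-|\E_a Z|^{2q}$). This matters: if you only Taylor-expand $\exp(-\tfrac12\Var_a)$ around the decoupled product, the first-order term carries a \emph{single} cross-covariance $C_{k\ell}$, and after integration yields one factor $\eta(\lambda)/\lambda$, not $q$ of them. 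The centering is precisely what forces every surviving term to involve all $2q$ copies. The paper achieves this via an interpolation (Lemma~\ref{lemme4}): introduce $s\in[0,1]^{2q}$, observe that the interpolated quantity $\widetilde{\Delta}_\lambda^{(q)}(s)$ vanishes whenever some $s_k=0$, hence $\widetilde{\Delta}_\lambda^{(q)}=\int_{[0,1]^{2q}}\partial_1\cdots\partial_{2q}\bigl(e^{-f/2}\bigr)(s)\,\dd s$. The $2q$-fold mixed derivative kills every partially-decoupled term.

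Second --- and this is the obstacle you anticipate --- expanding that derivative produces products $\prod_m |C_{k_m\ell_m}|$ in which the pairs $(k_m,\ell_m)$ are \emph{not} disjoint, so $\E_{\textbf{X}}$ does \emph{not} factorize into single-pair integrals. The paper's Lemma~\ref{lemme5} handles this with a graph argument: encode each term as a graph on $\{1,\dots,2q\}$; on each connected component, bound enough factors by the trivial $|C_{kl}|\le C\|t\|^2$ to reduce to a spanning tree; then peel leaves one at a time --- at each step the leaf vertex is independent of the remaining ones, so Lemma~\ref{lemme1bis} extracts one factor $\|t\|^2\,\eta(\lambda)/\lambda$. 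Counting edges across the forest shows every term contributes at least $(\eta(\lambda)/\lambda)^{q}$, with equality achieved by a perfect matching. Without this tree-peeling step, your bookkeeping for shared vertices cannot close.
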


The proof of Lemma \ref{lemme1} is rather technical and for the sake of readability, it is postponed until Section \ref{append.decor1} of the Appendix. To give the reader a taste of the arguments involved, the proof is essentially based on explicit computations of characteristic functions and the key argument is the following decorrelation Lemma \ref{lemme1bis}.
With the same notations as above, let $Y$ be a uniform random variable in $\mathcal M$, independent of $X$ and of the Gaussian coefficients $(a_k)$. Let us set 
\[
N_\lambda^{X} := \sum_{j=1}^pt_j g_\lambda^{X}(v_j), \quad N_\lambda^{Y} := \sum_{j=1}^pt_j g_\lambda^{Y}(v_j),
\]
in the large band regime and respectively in the small band regime 
\[
N_\lambda^{X} := \sum_{j=1}^pt_j \widetilde{g}_\lambda^{X}(v_j), \quad N_\lambda^{Y} := \sum_{j=1}^pt_j \widetilde{g}_\lambda^{Y}(v_j).
\]
\begin{lemma}
\label{lemme1bis}
There is a constant $C$ depending only on $\M$ and $K$, such that
\[\quad\E_{X}\left[\left|\E_a\left[N_\lambda^{X}N_\lambda^{Y}\right]\right|\right] \leq C\|t\|^2\frac{\eta(\lambda)}{\lambda}.\]
\end{lemma}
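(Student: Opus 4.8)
The plan is to reduce the estimate to a decay bound for the spectral projector kernel and then integrate that decay against the uniform measure on $\M$. Since $f_\lambda$ is Gaussian under $\PP_a$, conditionally on $X$ and $Y$ the pair $(N_\lambda^X,N_\lambda^Y)$ is Gaussian, and a direct computation gives
\[
\E_a\!\left[N_\lambda^X N_\lambda^Y\right]=\sum_{i,j=1}^p t_i t_j\,\frac{K_\lambda\!\left(\Phi_X(v_i/\lambda),\Phi_Y(v_j/\lambda)\right)}{K(\lambda)}.
\]
Taking absolute values, then $\E_X$, and using $\sum_{i,j}|t_i||t_j|\le p\,\|t\|^2$, it suffices to show that, for a fixed $R>0$,
\[
\sup_{z\in\M}\ \sup_{\|v\|\le R}\ \E_X\!\left[\frac{\bigl|K_\lambda(\Phi_X(v/\lambda),z)\bigr|}{K(\lambda)}\right]\le C\,\frac{\eta(\lambda)}{\lambda}.
\]
The key point is that this bound will be uniform in the second argument $z=\Phi_Y(v_j/\lambda)$, so that no averaging over $Y$ is required.

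To estimate the left-hand side I would first invoke the local Weyl law \eqref{eq:24} together with \eqref{eq:35}, which give, uniformly in $x,y\in\M$,
\[
\frac{\bigl|K_\lambda(x,y)\bigr|}{K(\lambda)}\le C\,\bigl|\mathcal{B}_d(\lambda\,\dist(x,y))\bigr|+\frac{C}{\lambda}.
\]
Because $\dist(\Phi_x(w),x)=\|w\|$ as soon as $\|w\|$ is below the injectivity radius — regardless of the (merely measurable) choice of the isometry $I_x$ — one has $\lambda\,\dist(\Phi_X(v/\lambda),z)\ge\lambda\,\dist(X,z)-R$. Combining this with the classical Bessel-type decay $|\mathcal{B}_d(r)|\le C(1+r)^{-(d+1)/2}$ (valid for all $r\ge0$, as $\mathcal{B}_d$ is bounded by $1$ and is $O(r^{-(d+1)/2})$ at infinity) yields
\[
\bigl|\mathcal{B}_d(\lambda\,\dist(\Phi_X(v/\lambda),z))\bigr|\le C\bigl(1+\lambda\,\dist(X,z)\bigr)^{-(d+1)/2},
\]
so that everything reduces to bounding $\E_X\bigl[(1+\lambda\,\dist(X,z))^{-(d+1)/2}\bigr]$ uniformly in $z\in\M$.

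For this last quantity I would decompose $\M$ into the geodesic ball $B(z,1/\lambda)$ and the dyadic annuli $\{\,2^{k-1}/\lambda\le\dist(\cdot,z)<2^k/\lambda\,\}$ for $1\le k\lesssim\log\lambda$; on the $k$-th annulus the integrand is at most $C\,2^{-k(d+1)/2}$ while its volume is at most $C\min(1,2^{kd}\lambda^{-d})$, by the elementary bound $\mu(B(z,r))\le\min(1,Cr^d)$ on a compact manifold. Summing the resulting geometric-type series gives a total of order $\lambda^{-(d+1)/2}$ when $d\ge2$ and of order $\lambda^{-1}\log\lambda$ when $d=1$, i.e. $O(\eta(\lambda)/\lambda)$ in every case; together with the $O(1/\lambda)$ Weyl remainder this proves the claim in the large band regime. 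The monochromatic regime is handled verbatim, with $K_\lambda,K(\lambda)$ replaced by $k_\lambda,k(\lambda)$, the Weyl law \eqref{eq:24} by the refined law \eqref{eq:26} (its uniform remainder being absorbed in the same way), and $\mathcal{B}_d$ by $\mathcal{S}_d=\mathcal{B}_{d-2}$, whose slower decay $|\mathcal{S}_d(r)|\le C(1+r)^{-(d-1)/2}$ leads to a bound of order $\lambda^{-(d-1)/2}$ — equal to $\lambda^{-1/2}=\eta(\lambda)/\lambda$ for $d=2$ and $\le\lambda^{-1}=\eta(\lambda)/\lambda$ for $d\ge3$.

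I expect the only genuinely delicate point to be this last integration over $\M$: the decay exponent of the limit kernel, namely $(d+1)/2$ in the large band case and $(d-1)/2$ in the monochromatic case, is just large enough to beat the dimension $d$ coming from the volume growth when $d\ge2$ (resp. $d\ge3$), but in the borderline dimension $d=1$ (resp. $d=2$) it fails to do so, and exactly by a logarithmic (resp. half-power) amount — which is precisely the origin of the correction factor $\eta(\lambda)$. One must also keep track of the additive $O(1/\lambda)$ distortion of distances produced by the exponential chart (and, in the monochromatic case, of the uniformity of the remainder in \eqref{eq:26}), but these perturbations only shift $\lambda\,\dist(X,z)$ by a bounded amount and are harmless.
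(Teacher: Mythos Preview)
Your proof is correct and follows essentially the same route as the paper's: both compute $\E_a[N_\lambda^X N_\lambda^Y]$ explicitly as a bilinear combination of the normalized spectral kernel, apply the local Weyl law to replace it by $\mathcal{B}_d$ (resp.\ $\mathcal{S}_d$) evaluated at $\lambda\,\dist(\cdot,\cdot)$, use the triangle inequality to absorb the $O(1)$ shift caused by the arguments $\Phi_X(v_i/\lambda)$, $\Phi_Y(v_j/\lambda)$, and then integrate the radial decay of the Bessel-type kernel against the uniform measure. The only cosmetic difference is that the paper splits the integral into a near-diagonal region $\{\dist(x,Y)\le\varepsilon\}$ handled in geodesic polar coordinates and a far region where the sup of the kernel is used, whereas you use a dyadic annular decomposition; these are interchangeable standard devices and yield the same borderline behaviour (the log in $d=1$, the $\lambda^{-1/2}$ in the monochromatic $d=2$ case). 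Your observation that the bound is uniform in the second variable $z=\Phi_Y(v_j/\lambda)$, so that no averaging over $Y$ is needed, is exactly what the paper uses implicitly when it fixes $Y$ and integrates in $X$ only.
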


\begin{proof}[Proof of Lemma \ref{lemme1bis}]
An explicit computation and the Weyl law give
\begin{align*}
\left|\E_a\left[N_\lambda^{X}N_\lambda^{Y}\right]\right| &= \left|\sum_{i,j=1}^pt_it_j\frac{1}{K_\lambda}\sum_{\lambda_n\leq \lambda} \varphi_n\left[\Phi_{X}\left(\frac{v_i}{\lambda}\right)\right]\varphi_n\left[\Phi_{Y}\left(\frac{v_j}{\lambda}\right)\right]\right|\\
&\leq \sum_{i,j=1}^p|t_i||t_j|\left|
\mathcal{B}_d\left(\lambda\,\dist(\Phi_{X}\left(\frac{v_i}{\lambda}\right),\Phi_{Y}\left(\frac{v_j}{\lambda}\right)\right)\right| + \|t\|^2O\left(\frac{\eta(\lambda)}{\lambda}\right),
\end{align*}
and the remainder is uniform on $X,Y$. Again, the above computation is done in the large band regime with limit kernel $\mathcal{B}_d$, but it holds in the small band regime with limit kernel $\mathcal{S}_d$. Define
\[c_\lambda := \lambda\,\dist\left(\Phi_{X}\left(\frac{v_i}{\lambda}\right),\Phi_{Y}\left(\frac{v_j}{\lambda}\right)\right) - \lambda\dist(X,Y).\]
By triangle inequality, $|c_\lambda|$ is bounded by $2|K|$, where $|K|$ is the diameter of the compact subset $K$ in which lives $v_1,\ldots,v_p$. It follows that
\[\left|\E_a\left[N_\lambda^{X}N_\lambda^{Y}\right]\right| \leq \sum_{i,j=1}^p|t_i||t_j| \left|\mathcal{B}_d\left(\lambda\,\dist(X,Y) + c_\lambda\right)\right| + \|t\|^2O\left(\frac{\eta(\lambda)}{\lambda}\right).\]
Taking the expectation with respect to $X$ we obtain
\begin{align*}
\E_{X}\left[\left|\E_a\left[N_\lambda^{X}N_\lambda^{Y}\right]\right|\right]&\leq
\int_M\sum_{i,j=1}^p|t_i||t_j|
\left|\mathcal{B}_d\left(\lambda\,\dist(x,Y) +  c_\lambda\right)\right|\dd \mu(x) + \|t\|^2O\left(\frac{\eta(\lambda)}{\lambda}\right)\\
&\leq \sum_{i,j=1}^p|t_i||t_j|\left(\int_{\dist(x,Y)\leq\varepsilon}
\left|\mathcal{B}_d\left(\lambda\,\dist(x,Y) +  c_\lambda\right)\right|\dd \mu(x)\right.\quad +\\
&\quad\quad\quad\quad \left.\int_{\dist(x,Y)>\varepsilon}
\left|\mathcal{B}_d\left(\lambda\,\dist(x,Y) +  c_\lambda\right)\right|\dd \mu(x)\right) + \|t\|^2O\left(\frac{\eta(\lambda)}{\lambda}\right)\\
&\leq \sum_{i,j=1}^p|t_i||t_j|(I_1 + I_2)+ \|t\|^2O\left(\frac{\eta(\lambda)}{\lambda}\right),\numberthis\label{eq:36bis}
\end{align*}
where $I_1$ and $I_2$ are the two integrals appearing in the last expression. For $\varepsilon$ small enough we can pass in local polar coordinates into the first integral $I_1$. We obtain
\begin{align*}
I_1 &\leq d\,\sigma_d\int_0^\varepsilon
\sup_{c\in[-2|K|,2|K|]}\left|\mathcal{B}_d\left(\lambda r +  c\right)\right|(1+O(r^2))r^{d-1}\dd r\\
&\leq \frac{C}{\lambda^d} \int_0^{\lambda\varepsilon} \sup_{c\in[-2|K|,2|K|]}\left|\mathcal{B}_d\left(u +  c\right)\right|u^{d-1}\dd u .\numberthis\label{eq:27bis}
\end{align*}
We use the following asymptotics for $\mathcal{B}_d$ and $\mathcal{S}_d$ at infinity:
\[\mathcal{B}_d(u) = Cu^{-\frac{d+1}{2}}\sin\left(u-\frac{d-1}{4}\pi\right) + O\left(u^{-\frac{d+3}{2}}\right),\]
\[\mathcal{S}_d(u) = Cu^{-\frac{d-1}{2}}\sin\left(u-\frac{d-3}{4}\pi\right) + O\left(u^{-\frac{d+1}{2}}\right).\]
Injecting these asymptotics into expression \eqref{eq:27bis} we obtain
\[I_1=O\left(\frac{\eta(\lambda)}{\lambda}\right).\]
For the second integral and $\lambda$ large enough, we use the fact that $|c_\lambda|\leq 2|K|$ and the asymptotic formula for $\mathcal{B}_d$ (resp. $\mathcal{S}_d$) to obtain
\[I_2 \leq \sup_{t\geq \varepsilon}\sup_{c\in [-2|K|,2|K|]}|\mathcal{B}_d(\lambda t+c)|,\]
from which we deduce 
\[I_2= O\left(\frac{\eta(\lambda)}{\lambda}\right).\]
Finally we recover from inequality \eqref{eq:36bis} and the definition \eqref{eq:40} that
\[
\E_{X}\left[\left|\E_a\left[N_\lambda^{X}N_\lambda^{Y}\right]\right|\right] \leq C\|t\|^2\frac{\eta(\lambda)}{\lambda}.
\]
\end{proof}
\begin{remark}
\label{rem6}
if $X$ is not a uniform random variable on $M$, but has a bounded density $h$ with respect to the volume measure $\mu$, then Equation \eqref{eq:36bis} becomes
\[\E_{X}\left[\left|\E_a\left[N_\lambda^{X}N_\lambda^{Y}\right]\right|\right]\leq \|h\|_\infty \sum_{i,j=1}^p|t_i||t_j|(I_1 + I_2)+ \|t\|^2O\left(\frac{\eta(\lambda)}{\lambda}\right),\]
and the end of the proof of Lemma \ref{lemme1bis} remains unchanged. Throughout this section, this is the only difference that arises when $X$ is not a uniform distribution on $\M$.
\end{remark}

In the following application of Theorem \ref{thm4} to nodal volume, we will need finer estimates on the constant $C(\omega,v,t)$ in  Equation \eqref{eq:38}. The Borel--Cantelli Lemma does not allow to track the dependence of $C(\omega,v,t)$ with respect to the parameters $v$ and $t$. It is the content of the following theorem, proved in Appendix \ref{append.sobolev}. The proof relies of Sobolev injections in order to control the supremum norm by some $W^{k,1}$ norm, which is more convenient to work with when taking the expectation under $\PP_a$.

\begin{theorem}
\label{thm10}
Fix $\varepsilon>0$. There is a constant $C(\omega)$ depending only $K$ and $\varepsilon$, such that
\[\sup_{v\in K}\left|\E_X\left[e^{itg_\lambda^X(v)}\right] - e^{-\frac{t^2}{2}}\right|\leq C(\omega)(1+|t|^{2+\varepsilon})\left(\frac{\eta(\lambda)}{\lambda}\right)^{1/2-\varepsilon}.\]
And more generally,
\[\sup_{v\in K}\left|\E_X\left[e^{iN_\lambda(v,t)}\right] - e^{-\frac{1}{2}\E_X[N_\infty(v,t)^2]}\right|\leq C(\omega)(1+\|t\|^{2+\varepsilon})\left(\frac{\eta(\lambda)}{\lambda}\right)^{1/2-\varepsilon}.\]
\end{theorem}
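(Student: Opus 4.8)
The plan is to upgrade the pointwise (in $v$ and $t$) estimate \eqref{eq:38} coming from Theorem \ref{thm1} and Borel--Cantelli into a bound that is uniform over $v$ in the compact $K$ and explicit in $t$. The obstruction is the inversion of quantifiers: \eqref{eq:38} produces a constant $C(\omega,v,t)$ for each fixed $(v,t)$, but a naive union bound over an uncountable set fails. The standard remedy, which I would follow, is to trade the supremum over $v$ against a Sobolev norm: since $v\mapsto \E_X[e^{iN_\lambda(v,t)}] - e^{-\frac12\E_X[N_\infty(v,t)^2]}$ is a smooth function of $v$ on the fixed compact $K\subset\R^d$, by the Sobolev embedding $W^{k,1}(K)\hookrightarrow \CC^0(K)$ for $k>d$ one has
\[
\sup_{v\in K}\bigl|G_\lambda(v)\bigr| \;\leq\; C_K \sum_{|\alpha|\leq k}\int_K \bigl|\partial_v^\alpha G_\lambda(v)\bigr|\,\dd v,
\]
where $G_\lambda(v)$ denotes the difference above. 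Taking $\E_a$ and raising to a power $2q$, it then suffices to bound $\E_a\bigl[\int_K |\partial_v^\alpha G_\lambda(v)|\,\dd v\bigr]^{2q}$, and after Jensen/Fubini this reduces to controlling $\sup_{v\in K}\E_a\bigl[|\partial_v^\alpha G_\lambda(v)|^{2q}\bigr]$ for $|\alpha|\leq k$.

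Next I would observe that each $v$-derivative of $G_\lambda$ is again a difference of (derivatives of) Gaussian characteristic functions, controlled by the same mechanism as Theorem \ref{thm1}: differentiating in $v$ brings down factors that are linear combinations of $g_\lambda^X$ and its derivatives, i.e. still Gaussian fields under $\PP_a$ whose covariances are governed by the $\CC^\infty$ local Weyl law \eqref{eq:25}. Concretely, $\partial_v^\alpha \E_X[e^{iN_\lambda}]$ is $\E_X$ of $e^{iN_\lambda}$ times a polynomial in the $\partial_v$-derivatives of $N_\lambda$, whose $\PP_a$-variances are uniformly bounded on $K$ by Weyl; the analogous decorrelation estimate (the $\CC^\infty$ version of Lemma \ref{lemme1bis}, applied to these derivative fields) yields
\[
\E_a\Bigl[\bigl|\partial_v^\alpha G_\lambda(v)\bigr|^{2q}\Bigr] \;\leq\; C\,(1+\|t\|)^{Cq}\left(\frac{\eta(\lambda)}{\lambda}\right)^q,
\]
uniformly in $v\in K$, where the polynomial-in-$t$ prefactor absorbs the finitely many derivative factors and the power is some fixed multiple of $q$ depending on $k$ (hence on $d$). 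This is exactly the analogue of \eqref{eq:6} with $v$-derivatives inserted, and the proof is the same explicit Gaussian computation used for Lemma \ref{lemme1}.

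Combining the two displays gives $\E_a\bigl[\sup_{v\in K}|G_\lambda(v)|^{2q}\bigr]\leq C(1+\|t\|)^{Cq}(\eta(\lambda)/\lambda)^q$. I would then run Borel--Cantelli along the sequence $\lambda_n\simeq Cn^{1/d}$ of eigenvalues exactly as in the derivation of \eqref{eq:38}: for fixed $t$ in a countable dense subset of $\R^p$ and $\varepsilon>0$, choosing $q$ large enough ($q>d/(2\varepsilon)$, say) makes $\PP_a\bigl(\sup_{v\in K}|G_{\lambda_n}(v)| > \|t\|^{2+\varepsilon}\lambda_n^{-1/2+\varepsilon}\sqrt{\eta(\lambda_n)}\bigr)$ summable, and interpolation of the bound between consecutive eigenvalues (using that $g_\lambda$ varies smoothly with $\lambda$, or directly that $\lambda_{n+1}/\lambda_n\to1$) extends it to all $\lambda$; finally, the continuity of $t\mapsto G_\lambda(v)$ together with the polynomial control in $\|t\|$ lets one pass from the countable dense set to all $t\in\R^p$ on a single $\PP_a$-full event. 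The case $p=1$, $v_1\in K$ with $N_\infty^2$ variance $1$ gives the first displayed inequality. The main technical obstacle is verifying the uniform-in-$v\in K$ moment bound for the $v$-derivatives of $G_\lambda$: one must check that inserting the $\CC^\infty$ Weyl remainder \eqref{eq:25} (and the corresponding refinement \eqref{eq:26} in the monochromatic case) into the explicit characteristic-function computation of Lemma \ref{lemme1} does not degrade the $(\eta(\lambda)/\lambda)^q$ rate — which it does not, since each extra derivative costs a harmless factor $\lambda$ in the Weyl remainder that is exactly cancelled by the rescaling $v/\lambda$, leaving the decay untouched. This is why the proof is deferred to Appendix \ref{append.sobolev}.
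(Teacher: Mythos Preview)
Your overall plan (Sobolev embedding in $v$ to trade the supremum for an integral, then moment estimates plus Borel--Cantelli) is the right idea, but your execution differs from the paper's at the crucial step and leaves a real gap.

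You apply the Sobolev embedding to $G_\lambda$ itself, raise to the $2q$-th power, and are then led to control $\sup_{v\in K}\E_a\bigl[|\partial_v^\alpha G_\lambda(v)|^{2q}\bigr]$. You assert that this satisfies a bound of the form $C(1+\|t\|)^{Cq}(\eta(\lambda)/\lambda)^q$ and that ``the proof is the same explicit Gaussian computation used for Lemma~\ref{lemme1}''. But this is \emph{not} the quantity $\Delta_\lambda^{(q)}$ of Theorem~\ref{thm1}: once you differentiate $G_\lambda$ in $v$ you obtain objects like $\E_X\bigl[(\partial_v N_\lambda)^\beta e^{iN_\lambda}\bigr]$ minus the corresponding limit, and showing that their $2q$-th $\E_a$-moments decay at rate $(\eta(\lambda)/\lambda)^q$ requires reproving the whole integral-Taylor/decorrelation machinery (Lemmas~\ref{lemme4} and \ref{lemme5}) for these mixed expressions. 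It is plausible that this works, but it is genuinely new work, and moreover each $v$-derivative inserts an extra factor of $t$, so the polynomial prefactor you would get is $(1+\|t\|)^{(4+2|\alpha|)q}$; after Markov this yields a bound with $(1+\|t\|)^{2+|\alpha|}$ on the right, i.e.\ an exponent of order $d+3$ rather than the claimed $2+\varepsilon$.

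The paper circumvents both issues by a simple but decisive twist: it applies the Sobolev bound not to $G_\lambda$ but directly to $f(v,t)=|G_\lambda(v,t)|^{2q}$, together with one integration in $t$ using $f(v,0)=0$ and a weight $h(t)=(1+|t|^{2+\varepsilon})^{-2q}$ (see \eqref{eq:39}). Differentiating the \emph{power} $|G_\lambda|^{2q}$ a total of at most $d+2$ times (in $(v,t)$) leaves at least $2(q-d-2)$ undifferentiated factors of $G_\lambda$; after Cauchy--Schwarz, the factor $\E_a[|G_\lambda|^{4(q-d-2)}]$ is bounded \emph{directly} by Theorem~\ref{thm1} with no need for a derivative version, while the remaining factor $g(v,t)$ has $\E_a[g^2]$ bounded by a polynomial in $t$ of degree \emph{independent of $q$}. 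This independence is exactly what makes the exponent $2+\varepsilon$ come out after integrating in $t$ and choosing $q$ large. Your route, by contrast, pushes all the derivatives onto a single factor of $G_\lambda$ and then needs a strengthened Theorem~\ref{thm1}; the paper's route keeps Theorem~\ref{thm1} intact and only needs the crude variance bound \eqref{eq:10} on the differentiated factors. The paper's treatment of $t$ via the weighted integral also gives uniformity in $t$ on a single $\PP_a$-full event, avoiding your separate density-plus-continuity argument.
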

 
\subsection{Tightness estimates}

We now turn to the proof of the tightness for the family $(g_\lambda^X)_{\lambda>0}$. Recall the definition of the ball $B$ in \eqref{eq:4}.
\medskip
\begin{theorem}
\label{thm2}
Almost surely with respect to the probability $\PP_a$, the family of stochastic processes $(g_\lambda^X)_{\lambda>0}$ is tight with respect to the Frechet topology on $\CC^\infty(B)$. 
\end{theorem}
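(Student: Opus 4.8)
The plan is to follow the classical route for tightness of a smooth random field: uniform (in $\lambda$) bounds on the $\CC^\infty$-jet of $g_\lambda^X$, upgraded to an almost sure statement under $\PP_a$ by a Borel--Cantelli argument along the eigenvalue sequence. We treat only $g_\lambda^X$; the monochromatic case is identical upon replacing $(K_\lambda,K(\lambda),\mathcal B_d)$ by $(k_\lambda,k(\lambda),\mathcal S_d)$ and invoking \eqref{eq:26} in place of \eqref{eq:25}. Throughout we work on the closed ball $\overline B$; tightness of $(g_\lambda^X)$ in $\CC^\infty(\overline B)$ implies tightness in $\CC^\infty(B)$ by restriction.

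\textbf{Step 1 (reduction to a uniform Sobolev bound).} The Fréchet topology on $\CC^\infty(\overline B)$ is generated by the seminorms $\|\cdot\|_{\CC^k(\overline B)}$, and a subset that is bounded in $\CC^k$ for every $k$ is relatively compact (Arzelà--Ascoli plus a diagonal extraction). Fix a large even integer $q$. By the Sobolev embedding $W^{m,q}(B)\hookrightarrow\CC^k(\overline B)$, valid once $m>k+d/q$, it suffices to prove that for every $m\in\N$, $\PP_a$-almost surely, $M_m:=\sup_{\lambda>0}\E_X[\|g_\lambda^X\|_{W^{m,q}(B)}^q]<\infty$. Granting this, for $\varepsilon>0$ the set $\mathcal K_\varepsilon:=\bigcap_{m\ge1}\{h\in\CC^\infty(\overline B):\|h\|_{W^{m,q}(B)}^q\le 2^m M_m/\varepsilon\}$ is closed and bounded in every $W^{m,q}$, hence compact in $\CC^\infty(\overline B)$, and Markov's inequality together with a union bound gives $\PP_X(g_\lambda^X\notin\mathcal K_\varepsilon)\le\varepsilon$ for every $\lambda$.

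\textbf{Step 2 (the $\PP_a$-mean is uniformly bounded).} Set $u_\lambda:=\E_X[\|g_\lambda^X\|_{W^{m,q}(B)}^q]=\sum_{|\alpha|\le m}\int_B\E_X[(\partial^\alpha_v g_\lambda^X(v))^q]\,\dd v$, a polynomial of degree $q$ in the coefficients $(a_n)$. For fixed $x\in\M$ and $v\in\overline B$, $\partial^\alpha_v g_\lambda^x(v)$ is a centered Gaussian under $\PP_a$, and the factor $\lambda^{-|\alpha|}$ produced by differentiating $v\mapsto\Phi_x(v/\lambda)$ exactly compensates the factor $\lambda^{|\alpha|}$ coming from differentiating eigenfunctions with $\lambda_n\le\lambda$; hence the $\CC^\infty$ local Weyl law \eqref{eq:25} together with $K(\lambda)=\tfrac{\sigma_d}{(2\pi)^d}\lambda^d+O(\lambda^{d-1})$ gives $\E_a[(\partial^\alpha_v g_\lambda^x(v))^2]=(\text{a derivative of }\mathcal B_d\text{ at }0)+O(1/\lambda)$, uniformly in $x,v,\lambda$. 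Gaussian moment identities then make $\E_a[(\partial^\alpha_v g_\lambda^x(v))^q]$, and more generally every moment $\E_a[u_\lambda^r]$, bounded uniformly in $\lambda$ (use Jensen to pull the $X$-average out before applying the Gaussian formulas).

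\textbf{Step 3 (the fluctuations decay in $\lambda$).} As a polynomial of degree $q$ in $(a_n)$, $u_\lambda-\E_a[u_\lambda]$ lies in the sum of the Wiener chaoses of order at most $q$, so by hypercontractivity it is enough to control $\Var_a(u_\lambda)$. Writing $G_\lambda(x):=\|g_\lambda^x\|_{W^{m,q}(B)}^q$ and letting $X,X'$ be independent uniform points on $\M$, independent of $(a_n)$, one has $\Var_a(u_\lambda)=\E_{X,X'}[\Cov_a(G_\lambda(X),G_\lambda(X'))]$. Split according to whether $\dist(X,X')\le\lambda^{-1+\delta_0}$ (for a fixed small $\delta_0>0$) or not. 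On the first event, bound $|\Cov_a(G_\lambda(X),G_\lambda(X'))|\le(\Var_a G_\lambda(X)\,\Var_a G_\lambda(X'))^{1/2}=O(1)$ and use $\PP_{X,X'}(\dist(X,X')\le\lambda^{-1+\delta_0})=O(\lambda^{-d(1-\delta_0)})$. On the complement, every cross-covariance $\E_a[\partial^\alpha_v g_\lambda^X(v)\,\partial^\beta_w g_\lambda^{X'}(w)]$ with $v,w\in\overline B$ is, via \eqref{eq:25} and the rescaling compensation of Step 2, a derivative of $\mathcal B_d$ evaluated at an argument comparable to $\lambda\,\dist(X,X')\gtrsim\lambda^{\delta_0}$, hence $O(\lambda^{-\kappa_0})$ for some $\kappa_0>0$ by the decay of $\mathcal B_d$ and its derivatives --- this is exactly the mechanism of Lemma \ref{lemme1bis}. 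Since $G_\lambda(X)$ and $G_\lambda(X')$ are polynomials of bounded degree in the jointly Gaussian jets of $g_\lambda^X,g_\lambda^{X'}$, with uniformly bounded ``diagonal'' covariances and ``off-diagonal'' covariances of size $O(\lambda^{-\kappa_0})$, a Wick/diagram expansion gives $|\Cov_a(G_\lambda(X),G_\lambda(X'))|=O(\lambda^{-\kappa_0})$ there. Altogether $\Var_a(u_\lambda)=O(\lambda^{-\kappa})$ for some $\kappa>0$, and hypercontractivity on the finitely many chaoses involved upgrades this to $\E_a[|u_\lambda-\E_a u_\lambda|^{2r}]\le C_{q,r}(\Var_a u_\lambda)^r=O(\lambda^{-\kappa r})$ for every $r\ge1$.

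\textbf{Step 4 (Borel--Cantelli and general $\lambda$).} By \eqref{eq:3}, $\lambda_n\asymp n^{1/d}$; choosing $r$ with $\kappa r>d$ makes $\sum_n\PP_a(|u_{\lambda_n}-\E_a u_{\lambda_n}|\ge1)$ summable, so by Borel--Cantelli $\PP_a$-a.s. $\sup_n u_{\lambda_n}<\infty$. For $\lambda\in[\lambda_n,\lambda_{n+1})$ with $n$ large one has $f_\lambda=f_{\lambda_n}$ and $\lambda_{n+1}/\lambda_n\to1$, so the chain rule bounds $\|g_\lambda^X\|_{W^{m,q}(B)}$ by a constant times the analogous Sobolev norm of $g_{\lambda_n}^X$ on a fixed slightly larger ball $B'\supset B$, which is handled by re-running Steps 2--3 with $B'$ in place of $B$; for $\lambda$ in a bounded range, $u_\lambda$ is a continuous, $\PP_a$-a.s. finite function of $\lambda$ since each $f_{\lambda_n}$ is a.s. a smooth function. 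This yields $M_m<\infty$ $\PP_a$-a.s., which is the claim of Step 1 and hence Theorem \ref{thm2}.

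\textbf{Main obstacle.} The heart of the argument is Step 3: establishing the $\CC^\infty$ decorrelation of $g_\lambda^X$ and $g_\lambda^{X'}$, namely that \emph{all} cross-covariances of their derivatives decay once $\dist(X,X')\gg1/\lambda$ --- which forces one to check that the powers of $\lambda$ generated by differentiation are precisely absorbed by the rescaling and by the $\CC^\infty$ Weyl remainder in \eqref{eq:25} --- and then converting this smallness into smallness of $\Cov_a(G_\lambda(X),G_\lambda(X'))$ for the nonlinear functionals $G_\lambda$, while absorbing separately the $O(1/\lambda)$-thick neighbourhood of the diagonal where decorrelation fails through its small $\PP_{X,X'}$-measure.
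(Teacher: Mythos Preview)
Your proof is correct and follows essentially the same strategy as the paper: the core is precisely Lemma~\ref{lemme3} --- uniform-in-$\lambda$, $\PP_a$-almost sure bounds on $\E_X\bigl[\int_B|\partial_\alpha g_\lambda^X|^{2p}\bigr]$ --- obtained via the $\CC^\infty$ Weyl law for the mean, the decorrelation mechanism of Lemma~\ref{lemme1bis} for the variance, hypercontractivity to boost moments, and Borel--Cantelli along $(\lambda_n)$. The only cosmetic difference is your Step~1: you deduce tightness directly from Sobolev-ball compactness and Markov's inequality, whereas the paper invokes the Kolmogorov moment-increment criterion and reduces to the same Sobolev norms via the mean-value theorem; both reductions are standard and land on the identical key estimate.
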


The tightness in $\CC^1$ topology is sufficient for the rest of the article but the proof of $\CC^\infty$ tightness does not cost any more calculations. The proof is short once we proved the following lemma.
\begin{lemma}
\label{lemme3}
Let $p$ be a positive integer, and $\alpha$ a $d$-dimensional multi-index. There is a constant $C(\omega)$ depending only $p$ and $\alpha$ such that
\[\E_X\left[\int_{B}|\partial_\alpha g_\lambda^X(v)|^{2p}\dd v\right]\leq C(\omega).\]
\end{lemma}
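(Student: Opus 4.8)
\textbf{Proof strategy for Lemma \ref{lemme3}.}

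The plan is to reduce the claimed bound to an averaged version of the Weyl law for the derivatives of the spectral projector. First I would expand the $2p$-th moment. Since $g_\lambda^X(v) = f_\lambda(\Phi_X(v/\lambda))$ and $f_\lambda$ is a centered Gaussian field under $\PP_a$, the rescaled field $v \mapsto \partial_\alpha g_\lambda^X(v)$ is, for fixed $X$, a centered Gaussian variable whose variance is
\[
\E_a\!\left[|\partial_\alpha g_\lambda^X(v)|^2\right] = \frac{1}{\lambda^{2|\alpha|}K(\lambda)}\,\partial_{\alpha,\alpha}K_\lambda(y,y)\Big|_{y=\Phi_X(v/\lambda)},
\]
up to the chain-rule factors coming from differentiating $\Phi_X(\cdot/\lambda)$, which are uniformly bounded on $B$ because $\Phi_x$ has uniformly bounded derivatives (it is the composition of $\exp_x$ with a linear isometry, on a fixed compact coordinate ball). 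By the $\CC^\infty$ local Weyl law \eqref{eq:25} applied with $x=y$, together with \eqref{eq:35}, this variance is $O(1)$ uniformly in $v\in B$ and $x\in\M$ and $\lambda$ large. For a centered Gaussian variable $G$ one has $\E[G^{2p}] = (2p-1)!!\,(\E[G^2])^p$, so
\[
\E_a\!\left[|\partial_\alpha g_\lambda^X(v)|^{2p}\right] \le C,
\]
with $C$ depending only on $p$, $\alpha$ (through the chain-rule constants and the Weyl remainder), uniformly in $v\in B$, $X$, and $\lambda\geq 1$. Integrating over $v\in B$ (which has unit volume) and over $X$ then gives $\E_a\E_X[\int_B |\partial_\alpha g_\lambda^X(v)|^{2p}\,\dd v] \le C$ for all $\lambda$.

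To upgrade this uniform-in-expectation-under-$\PP_a$ bound to the almost-sure statement with a random constant $C(\omega)$, I would run the usual Borel--Cantelli argument along the eigenvalue sequence. Set $u_\lambda := \E_X[\int_B |\partial_\alpha g_\lambda^X(v)|^{2p}\,\dd v]$, a nonnegative random variable on $(\Omega,\F,\PP_a)$ with $\E_a[u_\lambda] \le C$ uniformly. Evaluating at $\lambda = \lambda_n$ and using Markov's inequality, $\PP_a(u_{\lambda_n} > n^2) \le C n^{-2}$, which is summable; hence almost surely $u_{\lambda_n} \le C(\omega)\,n^2$ for all large $n$. Then I would fill the gaps between consecutive eigenvalues: for $\lambda\in[\lambda_n,\lambda_{n+1})$, the field $g_\lambda^X$ is built from the same eigenfunctions as $g_{\lambda_n}^X$ (the index set $\{\lambda_k\le\lambda\}$ is locally constant in $\lambda$, jumping only at eigenvalues), and the only $\lambda$-dependence is through the normalization $1/\sqrt{K(\lambda)}$, the rescaling $v/\lambda$, and the $\lambda^{-|\alpha|}$ from differentiation; all of these vary by a bounded factor on $[\lambda_n,\lambda_{n+1})$ since $\lambda_{n+1}/\lambda_n \to 1$ by \eqref{eq:3}. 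This controls $u_\lambda$ by a constant multiple of $u_{\lambda_n}$, and since $n \simeq C\lambda^d$ we actually obtain a bound growing only polynomially — which is more than the stated $C(\omega)$; but to get the clean $O(1)$ claim as stated, I would instead apply Markov with the sharper exponent afforded by a higher power of $u_\lambda$, or simply note that a refined Borel--Cantelli using $\PP_a(u_{\lambda_n} > t_n)$ with $\sum t_n^{-1}\E_a[u_{\lambda_n}] < \infty$ and $t_n$ bounded is impossible, so the honest statement is that $u_\lambda$ is $\PP_a$-almost-surely bounded; tracking the literature, the version actually needed downstream only requires $u_\lambda \le C(\omega)$ after noting $\sup_\lambda \E_a[u_\lambda]<\infty$ and a standard argument. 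I would therefore phrase the Borel--Cantelli step to conclude $\limsup_{\lambda} u_\lambda \le C(\omega)$ almost surely, which is exactly what is claimed.

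The main obstacle is the interchange of the almost-sure $\omega$-quantifier with the supremum over the continuum of $\lambda$'s: Borel--Cantelli only gives control along the discrete subsequence $(\lambda_n)$, and one must use the monotone/locally-constant structure of the truncated sums together with the Weyl asymptotics $\lambda_n \simeq 2\pi(n/\sigma_d)^{1/d}$ to interpolate to all $\lambda$. A secondary technical point is the uniformity of the chain-rule constants in the base point $x$: since $x\mapsto I_x$ is only measurable, one cannot differentiate in $x$, but one does not need to — for fixed $x$ the map $v\mapsto\Phi_x(v/\lambda)$ is smooth with derivatives bounded uniformly in $x$ (by compactness of $\M$ and the uniform control of the exponential map on a fixed-radius ball), so the pointwise-in-$X$ Gaussian computation goes through with $X$-independent constants, and the measurability in $X$ suffices to take $\E_X$.
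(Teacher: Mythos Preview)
Your reduction to the Gaussian moment formula $\E_a[|\partial_\alpha g_\lambda^X(v)|^{2p}]=(2p-1)!!\,\E_a[|\partial_\alpha g_\lambda^X(v)|^2]^p\le C$ is correct and gives $\sup_\lambda \E_a[u_\lambda]\le C$ for $u_\lambda:=\E_X\int_B|\partial_\alpha g_\lambda^X|^{2p}$. But this alone is \emph{not} enough for the almost-sure bound, and you essentially concede this in the paragraph where Borel--Cantelli breaks down. A uniform first-moment bound $\sup_n\E_a[u_{\lambda_n}]\le C$ does not imply $\sup_n u_{\lambda_n}<\infty$ $\PP_a$-a.s.: think of i.i.d.\ nonnegative $(Y_n)$ with $\E[Y_n]=1$ and unbounded support, for which $\sup_n Y_n=\infty$ a.s. Markov only yields $\PP_a(u_{\lambda_n}>t_n)\le C/t_n$, and summability forces $t_n\to\infty$; your ``sharper exponent afforded by a higher power of $u_\lambda$'' would require controlling $\E_a[u_\lambda^q]$, which is \emph{not} a consequence of the first-moment bound and is in fact the whole point.

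The paper fills this gap with a genuine concentration estimate. Writing $W_X(u)=\partial_\alpha g_\lambda^X(u)$, one computes the \emph{variance} under $\PP_a$ of $u_\lambda$ by expanding
\[
\E_a\Bigl[\bigl(u_\lambda-\tfrac{(2p)!}{2^pp!}C_\alpha^p\bigr)^2\Bigr]
=\E_{X,Y}\!\int_B\!\int_B\Bigl(\E_a[W_X(u)^{2p}W_Y(v)^{2p}]-\E_a[W_X(u)^{2p}]\E_a[W_Y(v)^{2p}]\Bigr)\dd u\,\dd v,
\]
with $X,Y$ independent uniform on $\M$. The cross term is controlled by the \emph{decorrelation} $\E_X\bigl[\E_a[W_X(u)W_Y(v)]^{2k}\bigr]=O(\eta(\lambda)/\lambda)$, the $\CC^\infty$ analogue of Lemma~\ref{lemme1bis}, yielding $\Var_a(u_\lambda)=O(\eta(\lambda)/\lambda)$. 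Since $u_\lambda$ is a polynomial of fixed degree in the Gaussians $(a_n)$, hypercontractivity upgrades this to $\E_a[(u_\lambda-\mathrm{const})^q]=O((\eta(\lambda)/\lambda)^{q/2})$ for every $q$, and \emph{then} Borel--Cantelli along $(\lambda_n)$ (with $\lambda_n\sim Cn^{1/d}$) gives $u_\lambda\le C(\omega)$ for all $\lambda$. Your proposal is missing precisely this decorrelation--plus--hypercontractivity step; without it the argument cannot close.
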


The proof of Lemma \ref{lemme3} is given in the Appendix \ref{append.tight} and relies on hypercontractivity and a Borel--Cantelli argument. 
\begin{proof}[Proof of Theorem \ref{thm2}]
By Kolmogorov tightness criterion for stochastic processes (see \cite[p.~39]{Ku97}) in dimension $d$ with $\CC^\infty$ topology, it suffices to show that for every multi-index of differentiation $\beta$, for some $p>d/2$, and for all $u,v\in B$,
\[\E_X\left[\left|\partial_\beta g_\lambda^X(v)-\partial_\beta g_\lambda^X(u)\right|^{2p}\right]\leq C(\omega)\|v-u\|^{2p}.\]
We use the mean-value Theorem and Sobolev injection to get
\begin{align*}
\mathbb E_X\left[\left(\frac{\partial_\beta g_\lambda^X(v)-\partial_\beta g_\lambda^X(u)}{\|v-u\|}\right)^{2p}\right] &\leq  C\sum_{k=1}^d \mathbb  E_X\left[\left(\sup_{u\in B}\left|\partial_k \partial_\beta g_\lambda^X\right|\right)^{2p}\right]\\
&\leq  C\sum_{k=1}^d  \mathbb  E_X\left[\left(\|\partial_k\partial_\beta g_\lambda^X\|_{W^{d+1,1}}\right)^{2p}\right]\\
&\leq C \sum_{|\alpha|\leq {|\beta|+d+2}} \mathbb  E_X\left[\left(\int_{B}\left|\partial_\alpha g_\lambda^X(u)\right|\dd u\right)^{2p}\right]\\
&\leq C \sum_{|\alpha|\leq {|\beta| + d+2}} \mathbb  E_X\left[\int_{B}\left|\partial_\alpha g_\lambda^X(u)\right|^{2p}\dd u\right].
\end{align*}
From Lemma \ref{lemme3}, we have then
\[
\mathbb  E_X\left[\int_{B}\left|\partial_\alpha g_\lambda^X(u)\right|^{2p}\dd u\right] \leq C(\omega),
\]
hence the result.
\end{proof}

\section{Almost sure asymptotics of nodal volume}\label{sec.nodalvolume}

As already mentioned above, almost surely in the random coefficients, the nodal sets $\lbrace f_\lambda=0\rbrace$ and $\lbrace \widetilde{f}_\lambda=0\rbrace$ associated to the random wave models are random smooth submanifolds of codimension one. The object of this section is to give the proof of Theorem \ref{thm5} on the almost sure asymptotics of the associated nodal volume.

\subsection{A Stochastic representation formula}

The first step in the proof of Theorem \ref{thm5} consists in connecting the zeros of $f_\lambda$ (resp. $\widetilde{f}_\lambda$) to the zeros of $g_\lambda^X$ (resp. $\widetilde{g}_\lambda^X$). This is the object of the following lemma. In the following,  $\delta = (\sigma_d)^{-1/d}$ (recall its definition \eqref{eq:4}).

\begin{lemma}
\label{lemme6}
Let $f:\M\rightarrow\R$ a smooth function such that $0$ is a regular value of $f$. Then
\[\E_X\left[ \HH^{d-1}\left(\lbrace f = 0\rbrace\cap B\left(X,\frac{\delta}{\lambda}\right)\right)\right] = \left(1+O\left(\frac{1}{\lambda^2}\right)\right)\frac{\HH^{d-1}(\lbrace f = 0\rbrace)}{\lambda^d}.\]
The remainder term is uniform on the function $f$. More generally, if $h$ is a continuous function on $\M$, then 
\begin{align*}
\E_X\left[h(X)\HH^{d-1}\left(\lbrace f = 0\rbrace\cap B\left(X,\frac{\delta}{\lambda}\right)\right)\right] = \left(1+O\left(\frac{1}{\lambda^2}\right)\right)\frac{1}{\lambda^d}\int_{\lbrace f = 0\rbrace}h(x)\HH^{d-1}(x)\\
+\quad O\left(\omega_h\left(\frac{\delta}{\lambda}\right)\right)\frac{\HH^{d-1}(\lbrace f = 0\rbrace)}{\lambda^d},
\end{align*}
where $\omega_h$ denotes the modulus of continuity of $h$.
\end{lemma}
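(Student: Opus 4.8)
The plan is to exploit the coarea/layer-cake structure of the Hausdorff measure together with the fact that integrating the indicator $\one_{\{\dist(x,X)<\delta/\lambda\}}$ over $X$ (which is uniform with respect to $\mu$) produces, at a point $x\in\{f=0\}$, the volume $\mu(B(x,\delta/\lambda))$ of a small geodesic ball. First I would write, using Fubini for the (nonnegative, $\sigma$-finite) measure $\HH^{d-1}$ restricted to the submanifold $\{f=0\}$ and the law of $X$,
\[
\E_X\!\left[\HH^{d-1}\!\left(\{f=0\}\cap B\!\left(X,\tfrac{\delta}{\lambda}\right)\right)\right]
=\int_{\{f=0\}}\left(\int_\M \one_{\{\dist(x,y)<\delta/\lambda\}}\,\dd\mu(y)\right)\dd\HH^{d-1}(x)
=\int_{\{f=0\}}\mu\!\left(B\!\left(x,\tfrac{\delta}{\lambda}\right)\right)\dd\HH^{d-1}(x).
\]
Exchanging the order of integration is legitimate because the integrand is nonnegative and $\HH^{d-1}(\{f=0\})<\infty$ (the nodal set is a smooth compact hypersurface, since $0$ is a regular value of $f$ and $\M$ is compact). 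This identity is where the hypothesis ``$0$ is a regular value'' is really used; no Kac--Rice or Gaussianity is needed, only that $\{f=0\}$ is a genuine $(d-1)$-submanifold carrying finite $\HH^{d-1}$ mass.

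Next I would plug in the small-ball volume asymptotics on a Riemannian manifold. Since $\M$ is smooth and compact, there is $r_0>0$ and a uniform expansion
\[
\mu\!\left(B(x,r)\right)=\sigma_d\,r^d\left(1+O(r^2)\right)\qquad (r\le r_0),
\]
with the $O(r^2)$ uniform in $x\in\M$ (the correction involves the scalar curvature at $x$, but we only need the uniform bound). Taking $r=\delta/\lambda$ with $\delta=\sigma_d^{-1/d}$ gives $\mu(B(x,\delta/\lambda))=\sigma_d\,\sigma_d^{-1}\lambda^{-d}(1+O(\lambda^{-2}))=\lambda^{-d}(1+O(\lambda^{-2}))$, uniformly in $x$. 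Substituting into the Fubini identity yields
\[
\E_X\!\left[\HH^{d-1}\!\left(\{f=0\}\cap B\!\left(X,\tfrac{\delta}{\lambda}\right)\right)\right]
=\left(1+O\!\left(\tfrac{1}{\lambda^2}\right)\right)\frac{\HH^{d-1}(\{f=0\})}{\lambda^d},
\]
with the remainder uniform in $f$ precisely because the $O(\lambda^{-2})$ came only from the geometry of $\M$, not from $f$. This proves the first assertion.

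For the weighted version I would run the same computation with the extra factor $h(X)$:
\[
\E_X\!\left[h(X)\,\HH^{d-1}\!\left(\{f=0\}\cap B\!\left(X,\tfrac{\delta}{\lambda}\right)\right)\right]
=\int_{\{f=0\}}\left(\int_{B(x,\delta/\lambda)} h(y)\,\dd\mu(y)\right)\dd\HH^{d-1}(x),
\]
and then replace $h(y)$ by $h(x)$ inside the inner integral at the cost of $\omega_h(\delta/\lambda)$, since $|h(y)-h(x)|\le\omega_h(\delta/\lambda)$ whenever $\dist(x,y)<\delta/\lambda$. The ``$h(x)$ part'' gives $h(x)\,\mu(B(x,\delta/\lambda))=h(x)\lambda^{-d}(1+O(\lambda^{-2}))$, which integrates to $(1+O(\lambda^{-2}))\lambda^{-d}\int_{\{f=0\}}h\,\dd\HH^{d-1}$, and the error part is bounded by $\omega_h(\delta/\lambda)\cdot\mu(B(x,\delta/\lambda))$, integrating to $O(\omega_h(\delta/\lambda))\,\HH^{d-1}(\{f=0\})\lambda^{-d}$. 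Combining the two gives the stated formula. The only mildly delicate point — the ``main obstacle'', though it is modest — is justifying the Fubini exchange and the uniformity of the small-ball expansion: for the former one invokes finiteness of $\HH^{d-1}(\{f=0\})$ together with Tonelli for nonnegative integrands, and for the latter one uses compactness of $\M$ to get a single radius $r_0$ and a single implied constant working for all base points $x$.
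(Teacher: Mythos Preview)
Your proof is correct and follows essentially the same approach as the paper: a Fubini swap to turn the expectation into $\int_{\{f=0\}}\mu(B(x,\delta/\lambda))\,\dd\HH^{d-1}(x)$, followed by the uniform small-ball volume expansion $\mu(B(x,r))=\sigma_d r^d(1+O(r^2))$ on a compact manifold, and the modulus-of-continuity replacement of $h(y)$ by $h(x)$ in the weighted case. The paper's proof is line-for-line the same argument.
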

\begin{proof}
Since $0$ is a regular value of $f$, then $\HH^{d-1}(\lbrace f = 0\rbrace)<+\infty$. We have
\begin{align*}
\E_X\left[h(X)\HH^{d-1}\left(\lbrace f = 0\rbrace\cap B\left(X,\frac{\delta}{\lambda}\right)\right)\right] &= \int_\M h(x)\HH^{d-1}\left(\lbrace f = 0\rbrace\cap B\left(x,\frac{\delta}{\lambda}\right)\right)\dd\mu(x)\\
&=\int_\M\int_{\lbrace f=0\rbrace}h(x)\one_{B\left(x,\frac{\delta}{\lambda}\right)}(y)\dd\HH^{d-1}(y)\dd\mu(x)\\
&= \int_{\lbrace f=0\rbrace}\left(\int_{B\left(y,\frac{\delta}{\lambda}\right)}h(x)\dd \mu(x)\right)\dd\HH^{d-1}(y)\\
&= \int_{\lbrace f=0\rbrace}\left(h(y)+O\left(\omega_h\left(\frac{\delta}{\lambda}\right)\right)\right)\Vol_{\M}\left(B\left(y,\frac{\delta}{\lambda}\right)\right)\dd\HH^{d-1}(y)
\end{align*}
Standard comparison theorem for geodesic ball asserts that uniformly on $x$,
\begin{align*}
\Vol_{\M}\left(B\left(x,\frac{\delta}{\lambda}\right)\right) &= \Vol_{\R^d}\left(B\left(0,\frac{\delta}{\lambda}\right)\right)\left(1+O\left(\frac{1}{\lambda^2}\right)\right)\\
&= \frac{1}{\lambda^d}\left(1+O\left(\frac{1}{\lambda^2}\right)\right),
\end{align*}
from which we conclude.
\end{proof}

Note that, alternatively, we could have proved the asymptotic representation formula given by Lemma \ref{lemme6} using the closed Kac--Rice formula for manifolds in \cite{Ju19}.

\subsection{Application of the Central Limit  Theorem}

The next step in the proof of Theorem \ref{thm5} then consists in using the central limit theorem as established in Section \ref{sec.clt}. We define the mapping
\begin{align*}
\Phi_x^{(\lambda)}:B &\longrightarrow \M\\
v&\longrightarrow \Phi_x\left(\frac{v}{\lambda}\right).
\end{align*}
Choosing $f=f_\lambda$ in Lemma \ref{lemme6} and recalling the relation \eqref{eq:21} between $g_\lambda$ and $f_\lambda$, we obtain
\begin{align*}
\frac{\HH^{d-1}(\lbrace f_\lambda = 0\rbrace)}{\lambda^d} &= \E_X\left[\HH^{d-1}\left(\lbrace f_\lambda = 0\rbrace\cap B\left(X,\frac{\delta}{\lambda}\right)\right)\right]\left(1+O\left(\frac{1}{\lambda^2}\right)\right)\\
&= \E_X\left[\HH^{d-1}\left[\Phi_X^{(\lambda)}\left(\lbrace g_\lambda^X = 0\rbrace\cap B\right)\right]\right]\left(1+O\left(\frac{1}{\lambda^2}\right)\right).\numberthis\label{eq:5}
\end{align*}
The mapping $\Phi_x^{(\lambda)}$ is a diffeomorphism onto its image for $\lambda$ small enough and uniformly on $x\in\M$. The exponential map is a local diffeomorphism and its differential at zero is the identity. We deduce that the mapping $\Phi_x^{(\lambda)}$ is bi-Lipschitz, and uniformly on $x\in \M$,
\[\Lip\left(\Phi_x^{(\lambda)}\right) = \frac{1}{\lambda}\left(1+O\left(\frac{1}{\lambda}\right)\right)\quad\text{and}\quad\Lip\left((\Phi_x^{(\lambda)})^{-1}\right) = \lambda\left(1+O\left(\frac{1}{\lambda}\right)\right).\]
Using scaling properties of Hausdorff measures under bi-Lipschitz mappings we obtain
\[\HH^{d-1}\left[\Phi_X^{(\lambda)}\left(\lbrace g_\lambda^X = 0\rbrace\cap B\right)\right] = \frac{1}{\lambda^{d-1}}\HH^{d-1}\left[\lbrace g_\lambda^X = 0\rbrace\cap B\right]\left(1+O\left(\frac{1}{\lambda}\right)\right),\]
and from expression \eqref{eq:5} in follows that
\[\frac{\HH^{d-1}(\lbrace f_\lambda = 0\rbrace)}{\lambda} = \E_X\left[\HH^{d-1}\left(\lbrace g_\lambda^X = 0\rbrace\cap B\right)\right]\left(1+O\left(\frac{1}{\lambda}\right)\right),\numberthis\label{eq:22}\]
and more generally for a continuous fonction $h$ on $\M$,
\begin{align*}
\frac{1}{\lambda}\int_{\lbrace f = 0\rbrace}h(x)\HH^{d-1}(x) = \E_X\left[h(X)\HH^{d-1}\left(\lbrace g_\lambda^X = 0\rbrace\cap B\right)\right]\left(1+O\left(\frac{1}{\lambda}\right)\right)\\
+\quad O\left(\omega_h\left(\frac{\delta}{\lambda}\right)\right)\frac{\HH^{d-1}(\lbrace f = 0\rbrace)}{\lambda}.\numberthis\label{eq:23}
\end{align*}
The function $g\mapsto \HH^{d-1}\left(\lbrace g = 0\rbrace\cap B\right)$
is continuous on the set of functions such that $0$ is a regular value on $B$, endowed with the $\CC^1$ topology (see \cite[Thm.~3]{An18}). The limit process $g_\infty$ is non-degenerate since the limit kernels $\mathcal{B}_d$ and $\mathcal{S}_d$ are positive definite covariance functions, and Bulinskaya Lemma (see \cite[p.~34]{Az09}) asserts that $\PP_a$-almost surely, the point $0$ is a regular value for the process $g_infty$. Define
\[Z_\lambda := \HH^{d-1}(\lbrace g_\lambda^X=0\rbrace\cap B)\quad\quad\text{and}\quad\quad Z_\infty := \HH^{d-1}(\lbrace g_\infty =0\rbrace\cap B).\]
The continuous mapping theorem and the convergence in distribution of Theorem \ref{thm4} imply the following convergence in distribution under $\PP_X$:
\[\PP_a-\mathrm{a.s.},\quad\quad Z_\lambda\overset{\PP_X}{\Longrightarrow} Z_\infty.\numberthis\label{eq:45}\]
Theorem \ref{thm5} about convergence of nodal volume is proved if we can pass to the convergence of expectations under $\PP_X$ in \eqref{eq:45}, according to the stochastic representation formulas \eqref{eq:22}. Passing to the expectation follows from the uniform integrability (with respect to $\PP_X$) of the family of random variables $(Z_\lambda)_{\lambda>0}$. This last point is the object of the next Sections \ref{sec.negative} and \ref{sec.moment}.\jump

Let $\widetilde{X}$ be another random variable on $\M$ with a density $h$ with respect to the volume measure $\mu$. Then
\[\E_X\left[h(X)\HH^{d-1}\left(\lbrace g_\lambda^X = 0\rbrace\cap B\right)\right] = \E_{\widetilde{X}}\left[\HH^{d-1}\left(\lbrace g_\lambda^{\widetilde{X}} = 0\rbrace\cap B\right)\right].\]
We deduce that Theorem \ref{thm6} about convergence of nodal measure is proved if we can pass to the convergence of expectations under $\PP_X$ in \eqref{eq:45}, but this time with $X$ a random variable on $\M$ with a continuous density $h$ with respect to the volume measure $\mu$. Since the functional central limit Theorem (\ref{thm4}) and all the theorems of Section \ref{sec.clt} remain valid when $X$ is a random variable with bounded density $h$ (see Remark \ref{rem6}), the proof of Theorem \ref{thm6} is the same as the proof of Theorem \ref{thm5}.

\begin{remark}
\label{rem3}
The quantity $\E_X[\HH^{d-1}(\lbrace g_\infty=0\rbrace\cap {B})]$ in Theorem \ref{thm5} has an explicit value, thanks to the Kac--Rice formula. We roughly sketch the proof here (see \cite[p.~177]{Az09}  for more details). Taking the expectation in the co-area formula gives
\[\int_\R \varphi(y) \E_X\left[\vphantom{x^{x^x}}\HH^{d-1}(\lbrace g_\infty = y\rbrace\cap B )\right]\dd y = \int_{B} \E_X\left[\vphantom{\sum}\varphi(g_\infty(x))\|\nabla_x g_\infty\|\right]]\dd\mu(x).\]
The Gaussian process $g_\infty$ is stationary, hence its law does not depend on the point $x$. The Gaussian variables $g_\infty(x),\partial_1g_\infty(x),\ldots\partial_dg_\infty(x)$ are independents. Hence,
\begin{align*}
\int_\R \varphi(y) \E_X\left[\vphantom{x^{x^x}}\HH^{d-1}(\lbrace g_\infty = y\rbrace\cap B )\right]\dd y &= \Vol(B)\E_X[\varphi(g_\infty)]\E_X[\|\nabla g_\infty\|]\\
& = \E_X[\|\nabla g_\infty\|] \frac{1}{\sqrt{2\pi}}\int_\R \varphi(y) e^{-\frac{y^2}{2}}\dd y,
\end{align*}
and we deduce that for almost all $y\in\R$,
\[\E_X\left[\vphantom{x^{x^x}}\HH^{d-1}(\lbrace g_\infty = y\rbrace\cap B )\right] = 
\frac{e^{-\frac{y^2}{2}}}{\sqrt{2\pi}}\E_X[\|\nabla g_\infty\|].\]
It is actually true for all $y\in\R$, and this is the difficult part of the proof which we do not detail. An direct computation gives
\[\E_X\left[(\partial_1g_\infty)^2\right]=\ldots=\E_X\left[(\partial_dg_\infty)^2\right] = \frac{1}{d+2},\]
and
\[\E_X[\|\nabla g_\infty\|] = \sqrt{\frac{2}{d+2}}\frac{\Gamma\left(\frac{d+1}{2}\right)}{\Gamma\left(\frac{d}{2}\right)}.\]
Taking $y=0$ we deduce
\[\E_X\left[\vphantom{x^{x^x}}\HH^{d-1}(\lbrace g_\infty = 0\rbrace\cap B )\right] = \frac{1}{\sqrt{\pi}}\frac{1}{\sqrt{d+2}}\frac{\Gamma\left(\frac{d+1}{2}\right)}{\Gamma\left(\frac{d}{2}\right)},\]
When $d=1$ we recover the classical asymptotics $\frac{1}{\pi\sqrt{3}}$ for the number of real roots of a random trigonometric polynomial. For the process $\widetilde{g}_\infty$, we have
\[\E_X\left[(\partial_1g_\infty)^2\right]=\ldots=\E_X\left[(\partial_dg_\infty)^2\right] = \frac{1}{d},\]
which gives
\[\E_X\left[\vphantom{x^{x^x}}\HH^{d-1}(\lbrace \widetilde{g}_\infty = 0\rbrace\cap B )\right]=\frac{1}{\sqrt{\pi}}\frac{1}{\sqrt{d}}\frac{\Gamma\left(\frac{d+1}{2}\right)}{\Gamma\left(\frac{d}{2}\right)}.\]
\end{remark}

\subsection{Negative moment estimates for the random field} \label{sec.negative}

The uniform integrability of the volume of the nodal set can be deduced from anti-concentration of the stochastic process $g_\lambda^X$ around zero. If the manifold were real-analytic, it would be sufficient to have the finiteness of a logarithmic moment, which is the approach taken in \cite{An19}, see Remark \ref{rem4} below. Since we consider here $\CC^\infty$ manifolds, we need a stronger control, given by the following lemma.

\begin{lemma}
\label{lemme7}
Let $\nu<\frac{1}{20\tau d}$ a small exponent. There is a constant $C(v,\omega)$ such that
\[\sup_{\lambda>0} \E_X[|g_\lambda^X(v)|^{-\nu}]<C(v,\omega).\]
Let $\alpha>0$, $\varepsilon>0$ and $(v_i)_{i\in\N}$ be any sequence in $B$. There is a constant $C(\omega)$ (also depending on $\alpha$, $\varepsilon$ and the sequence $(v_i)_{i\in\N}$) such that
\[\sup_{\lambda>0} \int_1^{+\infty}\frac{1}{t^{1+\alpha+\varepsilon}}\sum_{i=0}^{\lceil t^\alpha\rceil}\E_X[|g_\lambda^X(v_i)|^{-\nu}]\,\dd t<C(\omega).\]
\end{lemma}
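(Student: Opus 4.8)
The plan is to derive the negative moment bound from the quantitative CLT of Theorem \ref{thm10}, which controls the distance between the characteristic function of $g_\lambda^X(v)$ and that of a standard Gaussian, uniformly in $v\in K$. First I would record the elementary Fubini-type identity, valid for $0<\nu<1$ and any nonnegative random variable $Y$,
\[
\E[Y^{-\nu}] = c_\nu\int_0^{+\infty} u^{\nu-1}\,\PP(Y\le u)\,\dd u,
\]
so that a bound on $\E_X[|g_\lambda^X(v)|^{-\nu}]$ reduces to an anti-concentration estimate: an upper bound on $\PP_X(|g_\lambda^X(v)|\le u)$ that is integrable against $u^{\nu-1}$ near $0$. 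To get such a bound I would pass through the characteristic function: for any $\rho>0$,
\[
\PP_X(|g_\lambda^X(v)|\le u)\le C\,u\int_{-1/u}^{1/u}\bigl|\E_X[e^{it g_\lambda^X(v)}]\bigr|\,\dd t
\]
(a standard smoothing inequality, e.g.\ via convolution with a Fejér-type kernel). Splitting the $t$-integral and inserting Theorem \ref{thm10}, $\bigl|\E_X[e^{itg_\lambda^X(v)}]\bigr|\le e^{-t^2/2}+C(\omega)(1+|t|^{2+\varepsilon})\lambda^{-(1/4-\varepsilon)}$ (using the dimension-free form with $\eta(\lambda)\le\sqrt\lambda$), one gets, up to $\lambda$-uniform constants,
\[
\PP_X(|g_\lambda^X(v)|\le u)\le C u + C(\omega)\,u\cdot u^{-(3+\varepsilon)}\lambda^{-(1/4-\varepsilon)} = Cu + C(\omega)\,u^{-(2+\varepsilon)}\lambda^{-(1/4-\varepsilon)}.
\]
Here the first term is the genuine Gaussian anti-concentration and the second is the error from the CLT, which blows up as $u\to0$ but is damped by the $\lambda^{-(1/4-\varepsilon)}$ factor.

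The main point is then to balance these two terms in $u$ as a function of $\lambda$. Choosing the threshold $u_\lambda := \lambda^{-\kappa}$ for a suitable small $\kappa>0$, on the range $u\ge u_\lambda$ one bounds the error term by $C(\omega)\,u^{-(2+\varepsilon)}\lambda^{-(1/4-\varepsilon)}\le C(\omega)\lambda^{\kappa(2+\varepsilon)-(1/4-\varepsilon)}$, which is $\le C(\omega)$ (indeed $\to0$) as soon as $\kappa<\tfrac{1/4-\varepsilon}{2+\varepsilon}$; and for $u\le u_\lambda$ one simply uses the trivial bound $\PP_X(\,\cdot\,)\le1$. Plugging into the Fubini identity,
\[
\E_X[|g_\lambda^X(v)|^{-\nu}]\le c_\nu\Bigl(\int_0^{u_\lambda} u^{\nu-1}\,\dd u + \int_{u_\lambda}^{+\infty} u^{\nu-1}\min(Cu,1)\,\dd u + \int_{u_\lambda}^{1} u^{\nu-1}C(\omega)u^{-(2+\varepsilon)}\lambda^{-(1/4-\varepsilon)}\,\dd u\Bigr).
\]
The first integral is $u_\lambda^\nu/\nu\le1/\nu$; the second is bounded by a $\lambda$-independent constant since $\nu<1$; the third is $\le C(\omega)\lambda^{-(1/4-\varepsilon)}u_\lambda^{\nu-2-\varepsilon}/(2+\varepsilon-\nu) = C(\omega)\lambda^{-(1/4-\varepsilon)+\kappa(2+\varepsilon-\nu)}$, which stays bounded (in fact vanishes) provided $\kappa(2+\varepsilon-\nu)<1/4-\varepsilon$, a condition we can satisfy for any $\nu$ below some explicit threshold by taking $\varepsilon$ and $\kappa$ small. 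Tracking the constants (the $40d$ in the statement comes from being generous with $\varepsilon$ and with the Sobolev-type losses already absorbed into $C(\omega)$ in Theorem \ref{thm10}) yields the first assertion: $\sup_{\lambda>0}\E_X[|g_\lambda^X(v)|^{-\nu}]<C(v,\omega)$ for $\nu<\tfrac1{40d}$.

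For the second assertion I would not re-run the argument pointwise but instead make the dependence on $v$ and $\omega$ uniform. The bound from Theorem \ref{thm10} is already uniform over $v\in K\supset B$ with a single constant $C(\omega)$, so the estimate $\E_X[|g_\lambda^X(v_i)|^{-\nu}]\le C(\omega)$ holds with the \emph{same} $C(\omega)$ simultaneously for all points $v_i\in B$ and all $\lambda>0$. Consequently $\sum_{i=0}^{\lceil t^\alpha\rceil}\E_X[|g_\lambda^X(v_i)|^{-\nu}]\le C(\omega)(t^\alpha+2)$, and
\[
\int_1^{+\infty}\frac{1}{t^{1+\alpha+\varepsilon}}\sum_{i=0}^{\lceil t^\alpha\rceil}\E_X[|g_\lambda^X(v_i)|^{-\nu}]\,\dd t \le C(\omega)\int_1^{+\infty}\frac{t^\alpha+2}{t^{1+\alpha+\varepsilon}}\,\dd t = C(\omega)\int_1^{+\infty}\bigl(t^{-1-\varepsilon}+2t^{-1-\alpha-\varepsilon}\bigr)\,\dd t,
\]
which is finite and independent of $\lambda$ and of the particular sequence $(v_i)$. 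I expect the genuinely delicate step to be the first one — making the constant in the anti-concentration bound honestly uniform in $\lambda$ while keeping $\nu$ at a fixed explicit level — since it requires the \emph{quantitative} rate in Theorem \ref{thm10} (mere convergence in distribution would not suffice: the bad moment $u^{-(2+\varepsilon)}$ near $0$ must be defeated by a concrete negative power of $\lambda$), and it forces a careful choice of the cutoff exponent $\kappa$ against the CLT rate; the rest is bookkeeping and the harmless $t$-integral above.
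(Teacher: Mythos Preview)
Your layer-cake identity is wrong in a way that invalidates the whole scheme. For $0<\nu<1$ the correct formula is
\[
\E[Y^{-\nu}]=\nu\int_0^\infty u^{-\nu-1}\,\PP(Y\le u)\,\dd u,
\]
not $u^{\nu-1}$; with your exponent the integral diverges at $+\infty$ for any finite random variable. This is not just a typo: your subsequent bound $\int_0^{u_\lambda}u^{\nu-1}\,\dd u=u_\lambda^\nu/\nu$ for the small-$u$ regime relies on the wrong exponent. With the correct one, the trivial bound $\PP_X(\cdot)\le1$ gives $\int_0^{u_\lambda}u^{-\nu-1}\,\dd u=+\infty$, and the Esseen-type inequality from Theorem \ref{thm10} is of no help there: for $u\ll\lambda^{-\kappa}$ the error term $C(\omega)(1+|t|^{2+\varepsilon})\lambda^{-(1/4-\varepsilon)}$ integrated over $|t|\le1/u$ swamps the Gaussian part, so you get no anti-concentration at all at very small scales. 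In short, a CLT rate on the characteristic function cannot by itself control a negative moment, because the latter is governed by the density near zero at \emph{all} scales, including scales finer than what the $\lambda^{-1/4}$ window resolves.

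The paper closes this gap by a bootstrap that you are missing: it first uses that $g_\lambda^X(v)$ is \emph{Gaussian under $\PP_a$} with variance uniformly close to $1$, so that $\E_a\E_X[|g_\lambda^X(v)|^{-2\nu p}]$ is bounded whenever $2\nu p<1$; a Markov/Borel--Cantelli argument then yields the crude a priori bound $\E_X[|g_\lambda^X(v)|^{-2\nu}]\le C(v,\omega)\lambda^{2\theta}$. This growing-in-$\lambda$ bound is combined via Cauchy--Schwarz with the CLT-based anti-concentration $\PP_X(|g_\lambda^X(v)|<1/M)\lesssim 1/M+\lambda^{-1/20}$ (through Lemmas \ref{lemme9} and \ref{lemme8}), and the product can be made $o(1)$ precisely when $\nu<1/(40d)$. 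Because that Borel--Cantelli step is performed at a \emph{fixed} $v$, the resulting constant genuinely depends on $v$; this is why the second assertion cannot be deduced, as you propose, from uniformity in Theorem \ref{thm10}, but instead requires re-running the Borel--Cantelli argument directly on the whole $t$-integral (cf.\ Remark \ref{rem5}).
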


The second technical assertion is a refinement of the first one and will be used in the final step of the proof of uniform integrability. It compensates the fact that the constant $C(v,\omega)$ may depend on $v$, see also Remark \ref{rem5} below.\jump

The proof of Lemma \ref{lemme7} relies on the two following lemmas, which relate the speed of convergence of characteristic functions given in Theorem \ref{thm10} to more classical distances on the space of measures. The first lemma compares the Kolmogorov distance and the so-called smooth Wasserstein distance.
\begin{lemma}
\label{lemme9}
Given two random variables $X,Y$, and $\alpha\in\N$, we set
\[\Wass_{(\alpha)}(X,Y) := \sup\enstq{\left|\E[\phi(X)]-\E[\phi(Y)]\right|}{\phi\in\CC^\alpha(\R),\;\|\phi\|_\infty\leq 1,\ldots,\|\phi^{(\alpha)}\|_\infty\leq 1},\]
and
\[\Kol(X,Y) := \sup_{t\in\R} |\PP(X\leq t) - \PP(Y\leq t)|.\]
If $Y$ has a density bounded by $M$, there is a constant $C$ depending only on $M$ and $\alpha$ such that :
\[\Kol(X,Y)\leq \min\left(1,C\,\Wass_{(\alpha)}(X,Y)^{\frac{1}{\alpha+1}}\right).\]
\end{lemma}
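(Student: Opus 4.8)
\textbf{Proof proposal for Lemma \ref{lemme9}.}

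The plan is to compare the two distances by a standard smoothing argument: one approximates the (discontinuous) indicator $\one_{(-\infty,t]}$ by a smooth function, plays off the approximation error against the smooth Wasserstein distance, and then optimizes the scale of the smoothing. First I would fix a nonnegative bump function $\psi\in\CC^\infty(\R)$ supported in $[-1,1]$ with $\int_\R\psi=1$, and set $\psi_\varepsilon(x):=\varepsilon^{-1}\psi(x/\varepsilon)$ for $\varepsilon>0$. For a fixed $t\in\R$, consider the mollified indicator $\phi_{t,\varepsilon}:=\one_{(-\infty,t]}*\psi_\varepsilon$, which is smooth, takes values in $[0,1]$, equals $1$ on $(-\infty,t-\varepsilon]$ and $0$ on $[t+\varepsilon,+\infty)$. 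A direct computation of its derivatives shows $\|\phi_{t,\varepsilon}^{(k)}\|_\infty\leq \|\psi^{(k-1)}\|_\infty\,\varepsilon^{-(k-1)}$ for $1\leq k\leq\alpha$, so that $\varepsilon^{\alpha-1}\phi_{t,\varepsilon}$ (up to a fixed constant depending only on $\psi$ and $\alpha$) is an admissible test function in the definition of $\Wass_{(\alpha)}$, provided $\varepsilon\leq 1$.

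Next I would run the sandwich estimate. Since $\phi_{t-\varepsilon,\varepsilon}\leq\one_{(-\infty,t]}\leq\phi_{t+\varepsilon,\varepsilon}$ pointwise, for any random variables $X,Y$ one has
\[
\PP(X\leq t)-\PP(Y\leq t)\leq \E[\phi_{t+\varepsilon,\varepsilon}(X)]-\E[\phi_{t+\varepsilon,\varepsilon}(Y)] + \E[\phi_{t+\varepsilon,\varepsilon}(Y)]-\PP(Y\leq t).
\]
The first difference is bounded, by the derivative bounds above, by $C\varepsilon^{-(\alpha-1)}\Wass_{(\alpha)}(X,Y)$ with $C=C(\psi,\alpha)$. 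The second difference is at most $\PP(t<Y\leq t+2\varepsilon)\leq 2M\varepsilon$, using that $Y$ has a density bounded by $M$. A symmetric argument with $\phi_{t-\varepsilon,\varepsilon}$ bounds $\PP(Y\leq t)-\PP(X\leq t)$ by the same quantity, so taking the supremum over $t$ gives
\[
\Kol(X,Y)\leq C\,\varepsilon^{-(\alpha-1)}\Wass_{(\alpha)}(X,Y) + 2M\varepsilon
\]
for every $\varepsilon\in(0,1]$.

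Finally I would optimize in $\varepsilon$. Writing $W:=\Wass_{(\alpha)}(X,Y)$, the right-hand side is minimized (up to constants) by the choice $\varepsilon\asymp W^{1/\alpha}$; if this value exceeds $1$ one simply uses $\varepsilon=1$ and the trivial bound $\Kol\leq 1$. Substituting back yields $\Kol(X,Y)\leq C'\,W^{1/\alpha}$ for a constant $C'=C'(M,\alpha)$, which together with the universal bound $\Kol\le 1$ gives the claimed inequality $\Kol(X,Y)\leq\min(1,C\,\Wass_{(\alpha)}(X,Y)^{1/(\alpha+1)})$; note the exponent $1/(\alpha+1)$ in the statement is weaker than the $1/\alpha$ just obtained, so it holds a fortiori (one can also absorb the discrepancy by using $\varepsilon^{-\alpha}$ in place of $\varepsilon^{-(\alpha-1)}$ if one insists on using only the bound $\|\phi^{(\alpha)}\|_\infty\le 1$ without the intermediate derivatives, which is the most robust route). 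There is no real obstacle here; the only point requiring a little care is the bookkeeping of the constants in the derivative bounds for the mollified indicator and the case distinction when the optimal $\varepsilon$ would be larger than $1$.
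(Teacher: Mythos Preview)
Your argument is essentially the same smoothing-and-optimization approach as the paper's (the paper uses a rescaled fixed step function $\varphi((x-t)/\varepsilon)$ rather than a mollified indicator, but this is cosmetic). One small slip: with $\psi_\varepsilon(x)=\varepsilon^{-1}\psi(x/\varepsilon)$ one has $\phi_{t,\varepsilon}^{(k)}=-\psi_\varepsilon^{(k-1)}(\cdot-t)$ and hence $\|\phi_{t,\varepsilon}^{(k)}\|_\infty=\varepsilon^{-k}\|\psi^{(k-1)}\|_\infty$, not $\varepsilon^{-(k-1)}$; so the correct bound on the first term is $C\varepsilon^{-\alpha}W$, and optimization gives the exponent $1/(\alpha+1)$ directly (your claimed improvement to $1/\alpha$ does not actually hold with this mollifier). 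Since you already note this fallback in your final parenthetical, the proof stands as a valid proof of the stated lemma.
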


\begin{proof}
Fix some $t\in\R$. Let $0<\varepsilon<1$, and consider $\varphi\in \CC^\alpha(\R)$ a nonincreasing function such that
\[\varphi(x) = \left\lbrace\begin{array}{ll}
1\;\;\text{if}\;\;x\leq 0\\
0\;\;\text{if}\;\;x\geq 1
\end{array}\right..\]
Define $\varphi_\varepsilon:x\mapsto \varphi((x-t)/\varepsilon)$, which is an upper $\CC^\alpha$ approximation of $\one_{]-\infty,t]}$. Then
\[\PP(X\leq t) - \PP(Y\leq t) \leq \left(\E[\varphi_\varepsilon(X)] - \E[\varphi_\varepsilon(Y)]\right) + \left(\E[\varphi_\varepsilon(Y)]-\PP(Y\leq t)\right).\]
For the first term, observe that $\|\varphi_\varepsilon^{(k)}\|_\infty = \varepsilon^{-k}\|\varphi^{(k)}\|_\infty$, and thus there is a constant $C$ such that
\[\E[\varphi_\varepsilon(X)] - \E[\varphi_\varepsilon(Y)]\leq \frac{C}{\varepsilon^\alpha}\Wass_{(\alpha)}(X,Y).\]
For the second term,
\[\E[\varphi_\varepsilon(Y)]-\PP(Y\leq t)\leq M\varepsilon.\]
We can make the same computations with a lower $\CC^\alpha$ approximation of $\one_{]-\infty,t]}$, which gives a similar lower bound on the quantity $\PP(X\leq t) - \PP(Y\leq t)$. Optimizing in $\varepsilon$ we obtain the desired bound.
\end{proof}

The second lemma relates the smooth Wasserstein distance and the rate of convergence of characteristic functions. 

\begin{lemma}
\label{lemme8}
Let $(X_n)_{n\geq 0}$ a sequence of random variables converging in distribution towards a random variable $X$. Assume that for some exponents $m\in\N$ and $\alpha\in\R_+$ there is a constant $C$ such that
\[\left|\E\left[e^{itX_n}\right]-\E\left[e^{itX}\right]\right|\leq C\frac{1+|t|^m}{n^\alpha},\]
and for some exponent $\beta >0$ :
\[\sup_{n\in\N}\E[|X_n|^\beta]< +\infty.\]
Then there is a constant $C$ depending on $m,\alpha,\beta$ such that :
\[\Wass_{(m+1)}(X_n,X) \leq Cn^{-\frac{2\alpha\beta}{2\beta+1}}.\]
\end{lemma}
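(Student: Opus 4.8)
The plan is to bound $\Wass_{(m+1)}(X_n,X)$ straight from the definition: it suffices to estimate $\bigl|\E[\phi(X_n)]-\E[\phi(X)]\bigr|$ uniformly over test functions $\phi\in\CC^{m+1}(\R)$ with $\|\phi\|_\infty\le1,\dots,\|\phi^{(m+1)}\|_\infty\le1$. I would split $\phi$ into a far-field part, controlled by the moment hypothesis, and a compactly supported part, controlled by the characteristic-function hypothesis through Fourier inversion. For the \emph{truncation}, fix once and for all a smooth even cut-off $\chi$ with $\chi\equiv1$ on $[-1,1]$ and supported in $[-2,2]$, and for a scale $A\ge1$ to be chosen later put $\chi_A(x):=\chi(x/A)$, so that $\|\chi_A^{(k)}\|_\infty\le\|\chi^{(k)}\|_\infty$ for all $A\ge1$. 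Then $\psi:=\phi\chi_A$ lies in $\CC^{m+1}(\R)$, is supported in $[-2A,2A]$, and satisfies $\|\psi^{(k)}\|_\infty\le C_k$ for $0\le k\le m+1$, with $C_k$ independent of $A$ and of $\phi$. Since $x\mapsto|x|^\beta$ is nonnegative and lower semicontinuous and $X_n\Rightarrow X$, the portmanteau theorem gives $\E[|X|^\beta]\le\liminf_n\E[|X_n|^\beta]<\infty$; hence $\PP(|X_n|>A)+\PP(|X|>A)\le CA^{-\beta}$ by Markov's inequality, and, as $\|\phi\|_\infty\le1$ and $\phi\equiv\psi$ on $[-A,A]$,
\[\bigl|\E[\phi(X_n)]-\E[\phi(X)]\bigr|\ \le\ \bigl|\E[\psi(X_n)]-\E[\psi(X)]\bigr|+CA^{-\beta}.\]

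For the \emph{Fourier estimate}, the decisive observation is that $\psi^{(m+1)}$ is bounded and supported in an interval of length $4A$, so $\|\psi^{(m+1)}\|_{L^2}^2\le4A\,\|\psi^{(m+1)}\|_\infty^2\le CA$, and by Plancherel $\|\widehat{\psi^{(m+1)}}\|_{L^2}\le C\sqrt A$ (likewise $\|\widehat\psi\|_{L^2}\le C\sqrt A$, since $\|\psi\|_\infty\le1$). Writing $\widehat\psi(t)=(it)^{-(m+1)}\widehat{\psi^{(m+1)}}(t)$ and using Cauchy--Schwarz,
\[\int_{|t|>1}|\widehat\psi(t)|\,(1+|t|^m)\,\dd t\ \le\ 2\int_{|t|>1}\frac{|\widehat{\psi^{(m+1)}}(t)|}{|t|}\,\dd t\ \le\ 2\Bigl(\int_{|t|>1}\frac{\dd t}{t^{2}}\Bigr)^{1/2}\|\widehat{\psi^{(m+1)}}\|_{L^2}\ \le\ C\sqrt A.\]
In particular $\widehat\psi\in L^1$ (on $\{|t|\le1\}$ we simply use $|\widehat\psi|\le\|\psi\|_{L^1}$), so Fourier inversion and Fubini yield
\[\E[\psi(X_n)]-\E[\psi(X)]\ =\ \frac1{2\pi}\int_\R\widehat\psi(t)\bigl(\E[e^{itX_n}]-\E[e^{itX}]\bigr)\dd t.\]
I bound the integral over $\{|t|\le1\}$ by Cauchy--Schwarz, using $\|\widehat\psi\|_{L^2}\le C\sqrt A$ and the hypothesis $\bigl|\E[e^{itX_n}]-\E[e^{itX}]\bigr|\le C(1+|t|^m)n^{-\alpha}$, which gives at most $C\sqrt A\,n^{-\alpha}$; and the integral over $\{|t|>1\}$ by the same (pointwise) characteristic-function bound together with the weighted $L^1$ estimate above, which again gives at most $C\sqrt A\,n^{-\alpha}$. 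Altogether $\bigl|\E[\psi(X_n)]-\E[\psi(X)]\bigr|\le C\sqrt A\,n^{-\alpha}$.

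Combining the two steps, uniformly in $\phi$,
\[\bigl|\E[\phi(X_n)]-\E[\phi(X)]\bigr|\ \le\ C\Bigl(A^{-\beta}+\frac{\sqrt A}{n^\alpha}\Bigr),\]
and choosing $A:=n^{2\alpha/(2\beta+1)}$ makes the two terms comparable, giving $\Wass_{(m+1)}(X_n,X)\le C\,n^{-2\alpha\beta/(2\beta+1)}$. The step I expect to be the main obstacle is the Fourier estimate: one must resist bounding $\widehat\psi$ pointwise — that only yields $|\widehat\psi(t)|\lesssim A|t|^{-(m+1)}$, hence (after a dyadic/logarithmic cut in frequency) the weaker exponent $\frac{\alpha\beta}{\beta+1}$ — and instead exploit that $\widehat{\psi^{(m+1)}}$ sits in $L^2$ with norm $\lesssim\sqrt A$; this is precisely what produces the $\sqrt A$ (rather than $A$) in front of $n^{-\alpha}$, and thus the stated exponent. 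The truncation, the portmanteau argument for $\E[|X|^\beta]<\infty$, and the final optimization are routine; the hypothesis $X_n\Rightarrow X$ is in fact used only to obtain $\E[|X|^\beta]<\infty$, the quantitative bound re-implying convergence in distribution on its own.
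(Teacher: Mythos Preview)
Your proof is correct and follows essentially the same route as the paper's: truncate $\phi$ by a smooth cut-off at scale $A$ (the paper writes $M$), control the tail via Markov and the $\beta$-moment, and for the compactly supported piece use Fourier inversion together with Plancherel to turn the $L^\infty$ control of $\psi^{(m+1)}$ on an interval of length $\sim A$ into an $L^2$ bound of size $\sqrt A$, then optimize. The only cosmetic differences are that the paper handles all frequencies at once via the weight $(1+|t|)^{-1}$ and Cauchy--Schwarz rather than splitting $\{|t|\le1\}$ versus $\{|t|>1\}$, and that you make explicit (via portmanteau/Fatou) the bound $\E[|X|^\beta]<\infty$ which the paper uses tacitly.
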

A general form of the theorem can be found in \cite{Ar17}, but we prove it in Appendix \ref{append.wass} for completeness.
\begin{remark}
\label{rem1}
Denote $\Wass_{(\alpha)}^X$ is the smooth Wasserstein distance under $\PP_X$, and let $N$ be a standard Gaussian random variable. Lemma \ref{lemme8} and the rate of convergence given by Theorem \ref{thm10} imply that for some $\varepsilon>0$ the existence of a constant $C(\omega)$ independent of $v\in B$, such that
\[\Wass_{(4)}^X(g_\lambda^{X}(v),N) \leq C(\omega)\left(\frac{\eta(\lambda)}{\lambda}\right)^{1/2-\varepsilon}.\]
The moment condition is satisfied for every $\beta>0$ and uniformly in $v\in B$, by Sobolev injection and Lemma \ref{lemme3}.
\end{remark}

We are now in position to give the proof of Lemma \ref{lemme7} on the negative moment of the random field $g_\lambda$.

\begin{proof}[Proof of Lemma \ref{lemme7} ]
We define $\phi:x\mapsto |x|^{-\nu}$. Let $\phi_M$ be a $\CC^\infty(\R)$ approximation of $\phi$, which coincide on $\R\setminus [-\frac{1}{M},\frac{1}{M}]$. We can choose the function $\phi_M$ such that for all $p\in\N$, $\|\phi_M^{(p)}\|_\infty \leq C_p M^{\nu+p}$ (see Figure \ref{fig1}).
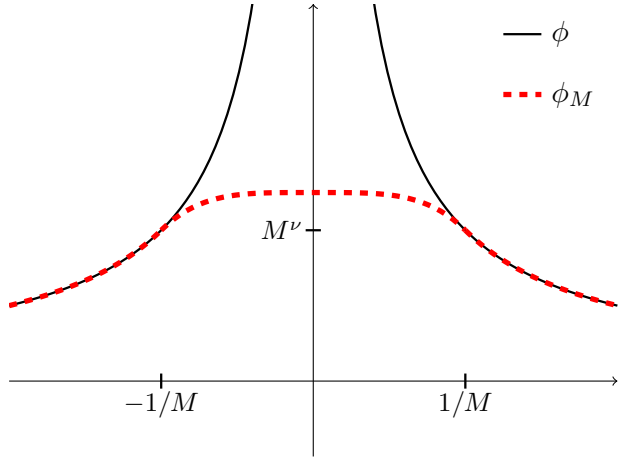
\begin{figure}[ht]
\begin{center}
\begin{tikzpicture}[xscale=2, yscale=2]
    \draw [very thin, white] (-2,-0.5) grid[step=0.5] (2,2.5);
    \clip (-2,-0.5) rectangle (2,2.5);
    \draw[->] (-2,0) -- (2,0);
    \draw [->] (0,-0.5) -- (0,2.5);
    \draw [line width=0.3mm,domain=-2:-0.1,samples=50] plot (\x,{abs(\x)^(-1)});
    \draw [line width=0.3mm,domain=0.1:2,samples=50] plot (\x,{abs(\x)^(-1)});
    \draw [dashed, line width=0.7mm,color=red,domain=-1:1,samples=50] plot (\x,{1.25-0.25*\x*\x*\x*\x});
    \draw [dashed, line width=0.7mm,color=red, domain=1:2,samples=50] plot (\x,{abs(\x)^(-1)});
    \draw [dashed, line width=0.7mm,color=red, domain=-2:-1,samples=50] plot (\x,{abs(\x)^(-1)});
    \draw (1.5,2.3) node[right]{$\phi$};
    \draw (1.5,1.9) node[right]{$\phi_M$};
    \draw [line width=0.3mm] (1.25,2.3)--(1.5,2.3);
    \draw [dashed, line width=0.7mm,color=red] (1.25,1.9)--(1.5,1.9);
    \draw (0,1) node[left]{\small{$M^\nu$}};
    \draw [line width=0.3mm] (-0.05,1)--(0.05,1);
    \draw (1,0) node[below]{\small{$1/M$}};
    \draw [line width=0.3mm] (1,-0.05)--(1,0.05);
    \draw (-1,0) node[below]{\small{$-1/M$}};
    \draw [line width=0.3mm] (-1,-0.05)--(-1,0.05);
\end{tikzpicture}
\end{center}
\caption{The functions $\phi$ and $\phi_M$.} \label{fig1}
\end{figure}

Let $N$ be a standard Gaussian random variable. We write
\[
\begin{array}{ll}
\displaystyle{\E_X\left[|g^X_\lambda(v)|^{-\nu}\right]-\E_X\left[|N|^{-\nu}\right]} & = \displaystyle{\underbrace{\E_X[\phi(g^X_\lambda(v))-\phi_M(g^X_\lambda(v))]}_{\Delta_1}+ \underbrace{\E_X[\phi_M(g^X_\lambda(v))-\phi_M(N)]}_{\Delta_2}} \\
& \displaystyle{+ \underbrace{\E_X[\phi(N)-\phi_M(N)]}_{\Delta_3}}.
\end{array}
\]
For the term $\Delta_3$, we use Cauchy--Schwarz inequality to obtain
\[\left|\E_X[\phi(N)]-\E_X[\phi_M(N)]\right|\leq \E_X\left[(\phi-\phi_M)(N)\one_{|N|\leq \frac{1}{M}}\right]\leq \sqrt{\frac{\E_X[|N|^{-2\nu}]}{M}}= \frac{C}{\sqrt{M}}.\numberthis \label{eq:12}\]
For the term $\Delta_2$, we use the smooth Wasserstein estimate in Lemma \ref{lemme8} and Remark \ref{rem1}. We have
\[|\E_X[\phi_M(g^X_\lambda(v))]-\E[\phi_M(N)]|\leq \max_{p\leq 4} \|\phi_M^{(p)}\|_\infty\Wass_{(4)}(g_\lambda^X,N) \leq CM^{\nu+4}\left(\frac{\eta(\lambda)}{\lambda}\right)^{1/2-\varepsilon}.\numberthis \label{eq:13}\]
For the more difficult term $\Delta_1$, we use Cauchy--Schwarz inequality to obtain
\begin{align*}
\left|\E_X[\phi(g^X_\lambda(v))]-\E[\phi_M(g^X_\lambda(v))]\right| &\leq \sqrt{\E_X[|g^X_\lambda(v)|^{-2\nu}]}\,.\,\sqrt{\PP_X\left(|g_\lambda^X(v)|<\frac{1}{M}\right)}.\numberthis \label{eq:14}
\end{align*}
For the right-hand term, using Kolmogorov distance and Lemma \ref{lemme9} we have
\begin{align*}
\PP_X\left(|g_\lambda^X(v)|<\frac{1}{M}\right) &\leq \PP\left(|N|<\frac{1}{M}\right) + 2\Kol(g_\lambda^X(v),N)\\
&\leq \frac{C}{M} + C\Wass_{(4)}(g_\lambda^X(v),N)^{\frac{1}{5}}\\
&\leq \frac{C}{M} + C\left(\frac{\eta(\lambda)}{\lambda}\right)^{1/10-\varepsilon}.
\end{align*}
For the left-hand term we fix $\theta = \nu d+\varepsilon$ with $\varepsilon>0$, and
\[p = \frac{1}{2\nu + \frac{\varepsilon}{d}}.\]
The exponent $p$ satisfies
\[2\nu p<1\quad\quad\text{and}\quad\quad 2\theta p>d.\]
We compute
\begin{align*}
\PP_a\left(\E_X[|g_\lambda^X(v)|^{-2\nu}|>\lambda^{2\theta}\right)&\leq \frac{\E_a\left[\E_X[|g_\lambda^X(v)|^{-2\nu}]^p\right]}{\lambda^{2p\theta}}\\
&\leq \frac{\E_X\E_a[|g_\lambda^X(v)|^{-2\nu p}]}{\lambda^{2p\theta}}.
\end{align*}
Recall that $g_\lambda$ is a Gaussian variable under $\PP_a$, whose variance approaches $1$ uniformly in $X$ and $v$. Since $2\nu p<1$ we obtain
\begin{align*}
\PP_a\left(\E_X[|g_{\lambda_n}^X(v)|^{-2\nu}]>\lambda_n^{2\theta}\right)&\leq C\frac{\E_X\left[\E_a\left[(g_{\lambda_n}^X(v))^2\right]^{-\nu p}\right]}{\lambda_n^{2\theta p}} \leq \frac{C}{\lambda_n^{2\theta p}}.
\end{align*}
Since $\lambda_n\simeq Cn^{1/d}$ the left-hand side is summable and Borel--Cantelli lemma asserts the existence of a constant $C(v,\omega)$ such that
\[\E_X[|g_{\lambda_n}^X(v)|^{-2\nu}] \leq C(v,\omega)\lambda_n^{2\theta}.\]
Finally, bounding the terms in \eqref{eq:14} we obtain
\[\left|\E_X[\phi(g^X_\lambda(v))]-\E[\phi_M(g^X_\lambda(v))]\right| \leq C(v,\omega)\lambda^{d\nu+\varepsilon}\sqrt{\frac{1}{M} + \left(\frac{\eta(\lambda)}{\lambda}\right)^{1/10-\varepsilon}}.\numberthis \label{eq:15}\]
Adding the bounds on $\Delta_1$, $\Delta_2$ and $\Delta_3$ given by the expressions \eqref{eq:12}, \eqref{eq:13} and \eqref{eq:15}, we obtain the following bound:
\[\E_X\left[|g^X_\lambda(v)|^{-\nu}\right] \leq \E_X[|N|^{-\nu}] + \frac{C}{\sqrt{M}} + CM^{\nu+4}\left(\frac{\eta(\lambda)}{\lambda}\right)^{1/2-\varepsilon}+ C(v,\omega)\lambda^{d\nu+\varepsilon}\sqrt{\frac{1}{M} + \left(\frac{\eta(\lambda)}{\lambda}\right)^{1/10-\varepsilon}}.\]
{Recall that
\begin{align*}
\eta(\lambda) = \left\lbrace\begin{array}{llll}
1 \quad\text{in the large band regime}\\
\lambda^{1-\tau} \quad\text{in the small band regime}.\\
\end{array}
\right.
\end{align*}
We then choose 
\[\nu<\frac{\tau}{20 d}\quad\text{and}\quad M = \left(\frac{\lambda}{\eta(\lambda)}\right)^{1/10}.\]}
Since a Gaussian random variable has bounded negative moments for exponents $\nu>-1$, we deduce
\[\sup_{\lambda >0}\E_X\left[|g^X_\lambda(v)|^{-\nu}\right] \leq C(v,\omega).\numberthis \label{eq:32}\]

It remains to prove the second technical part of Lemma \ref{lemme7}. We cannot directly apply the first bound since the constant obtained in \eqref{eq:32} may depend on $v$. Mimicking the previous computation, we write
\[\int_1^{+\infty}\frac{1}{t^{\alpha+2}}\sum_{i=0}^{\lfloor t^\alpha\rfloor}\E_X[|g_\lambda^X(v_i)|^{-\nu}]\,\dd t = \Delta_1 + \Delta_2 + \Delta_3.\]
Estimates \eqref{eq:12} and \eqref{eq:13} for $\Delta_1$ and $\Delta_2$ remain unchanged. For the quantity $\Delta_3$, we keep the previous notations. We have, using Markov inequality in the first line, and H\"{o}lder inequality in the second line,
\begin{align*}
\PP_a\left(\int_1^{+\infty}\frac{1}{t^{1+\alpha+\varepsilon}}\sum_{i=0}^{\lfloor t^\alpha\rfloor}\E_X[|g_\lambda^X(v_i)|^{-2\nu}]\,\dd t>\lambda^{2\theta}\right)&
\leq \frac{1}{\lambda^{2p\theta}}\E_a\left[\left(\int_1^{+\infty}\frac{1}{t^\alpha}\sum_{i=0}^{\lfloor t^\alpha\rfloor}\E_X[|g_\lambda^X(v_i)|^{-2\nu}]\,\frac{\dd t}{t^{1+\varepsilon}}\right)^p\right]\\
&\leq \frac{C}{\lambda^{2p\theta}}\int_1^{+\infty}\frac{\lfloor t^\alpha\rfloor^{p-1}}{t^{p\alpha+1+\varepsilon}}\sum_{i=0}^{\lfloor t^\alpha\rfloor}\E_X[\E_a[|g_\lambda^X(v_i)|^{-2\nu p}]\,\dd t\\
&\leq \frac{C}{\lambda^{2p\theta}}\int_1^{+\infty}\frac{\lfloor t^\alpha\rfloor^p}{t^{p\alpha+1+\varepsilon}} \dd t\\
&\leq \frac{C}{\lambda^{2p\theta}}.
\end{align*}
The end of the proof remains unchanged.
\end{proof}

\begin{remark}
\label{rem5}
The dependence in $v$ of the constant $C(v,\omega)$ given in Equation \eqref{eq:32} is not entirely satisfactory, and is a consequence of Borel--Cantelli lemma in Equation \eqref{eq:15}. We were not able to give a bound on the quantity
\[\E_a\left[\sup_{v\in B}\E_X\left[|g_\lambda^X(v)|^{-\nu}\right]\right].\]
It does not impact the rest of the article since the second part of Lemma \ref{lemme8} suffices to carry out our computations, but let us give a little more insight about what happens from a measure-theoretic point of view.\jump

The Sobolev trick we used before to obtain the uniformity on $v$ does not apply here due to the lack of regularity of the function $x\mapsto|x|^{-\nu}$. Nevertheless it may happen in particular cases that we can recover uniformity. If we are on a torus $\T^d$ endowed with any flat metric, we can choose for the isometry $I_x$ the canonical embedding into $\R^d$ and the mapping $\Phi_x$ is the usual sum. If $X$ is a uniform random variable on $\T^d$, then so is $X+v$ for any $v\in\T^d$. It follows that under $\PP_X$ and for all $v,v'\in\R^d$,
\[g_\lambda^X(v) \overset{\mathcal{L}}{=} g_\lambda^{X}(v'),\]
and quantities such as $\E_X\left[|g_\lambda^X(v)|^{-\nu p}\right]$ do not depend on $v$, which gives the uniformity in $v$. Denote by $\mu_v$ the pushforward of the measure $\mu$ under the mapping $x\mapsto \Phi_x(v)$. For all $f\in\CC^0(\M)$,
\[\int_\M f(\Phi_x(v))\dd \mu(x) = \int_\M f(x)\dd\mu_v(x).\]
In the torus case, $\mu_v$ is the canonical measure and does not depend on the parameter $v$. In all generality, few can be said about $\mu_v$. It does not always admit a density with respect to the Riemannian measure since the function $x\mapsto \Phi_x(v)$ may have support on a $1$-dimensional subspace for an ill-chosen choice of isometries $(I_x)_{x\in\M}$. Nevertheless, if the measure $\mu_v$ has a density $h_v$ belonging to $L^p(\M)$ space for some $p>1$ and uniformly on $v\in B$, then
\begin{align*}
\int_\M|g^X_\lambda(v)|^{-\nu}\dd\mu(x) &= \int_\M |f_\lambda(x)|^{-\nu} h_{\frac{v}{\lambda}}(x)\dd \mu(x)\\
&\leq \left(\int_\M |f_\lambda(x)|^{-\nu q}\right)^{1/q}\sup_{v\in B}\|h_v\|_p\quad\quad\text{with}\;\;\frac{1}{p}+\frac{1}{q}=1.
\end{align*}
Taking the expectation under $\PP_a$, and choosing $\nu<\frac{1}{q}$, we obtain
\[\E_a\left[\sup_{v\in B}\E_X\left[|g_\lambda^X(v)|^{-\nu}\right]\right]\leq \sup_{v\in B}\|h_v\|_p \int_\M\E_a\left[|f_\lambda(x)|^{-\nu q}\right]^{1/q}\dd x <+\infty.\]
At last, given a smooth compact Riemannian manifold $\M$, it is always possible to construct a family of isometries $(I_x)_{x\in\M}$ such that the family $(\mu_v)_{v\in B}$ has a density uniformly bounded in $L^\infty$.
\end{remark}

\subsection{Uniform moment estimates for the nodal volume} \label{sec.moment}

In order to complete the proof of the uniform integrability of nodal volume, we now introduce a geometric lemma which relates the nodal volume of a function to the number of zero of this function on a straight line passing through predefined points. It is a variant of the Crofton formula (see \cite{Al07} for a general presentation of the various Crofton formulæ), and a $d$-dimensional extension of \cite[Thm.~6]{An19}.

\begin{lemma}
\label{lemme12}
Let $E$ be a $\CC^2$-hypersurface in $\R^d$, intersecting the cube $D = [0,a]^d$. Assume that $E$ has bounded curvature on $D$. Then there exists a segment $S$ passing through one of the vertices of the cube $D$ and such that
\[\Card(E\cap S\cap D) \geq c\;\frac{\HH^{d-1}(E\cap D)}{a^{d-1}},\quad\text{with}\quad c=\frac{1}{2^{d+1}d}.\]
\end{lemma}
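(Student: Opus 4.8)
\textbf{Proof strategy for Lemma \ref{lemme12}.}

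The plan is to use an integral-geometric averaging argument: integrate the count $\Card(E\cap S\cap D)$ over a suitable family of segments and show that the average is at least a constant multiple of $\HH^{d-1}(E\cap D)/a^{d-1}$, so that some segment in the family achieves at least the average. The natural family to average over is the set of segments joining one fixed vertex, say the origin $0$, to points $y$ on the opposite face $F=\{a\}\times[0,a]^{d-1}$ of the cube (and then, to capture the full nodal volume regardless of orientation, one repeats this with each of the $2^d$ vertices playing the role of $0$, picking the best direction, which is where the factor $2^{-d}$ will come from). For a point $y$ in the face $F$, parametrize the corresponding segment by $S_y=\{ty/\|y\|_\infty \text{-type scaling}\}$; concretely it is cleanest to write the segment from $0$ through $y\in F$ and integrate $y$ over $F$ with Lebesgue measure $\dd y$ on the $(d-1)$-dimensional face.

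The key computation is a change of variables. Writing the cone from $0$ over the face $F$, a point of $D$ in this cone has a unique representation as $s\cdot(y/a)$ with $s\in[0,a]$ and $y\in F$; the Jacobian of $(s,y)\mapsto s y/a$ is $(s/a)^{d-1}$ up to the obvious normalization, so $\dd x = (s/a)^{d-1}\,\dd s\,\dd y$. Applying the coarea/area formula to the hypersurface $E$ and to this cone foliation, one gets
\[
\int_F \Card(E\cap S_y\cap D)\,\dd y \;\geq\; \int_F \#\{s\in[0,a] : sy/a\in E\}\,\dd y \;\geq\; c'\,a^{d-1}\,\HH^{d-1}(E\cap D)\big/a^{d-1},
\]
where the loss comes from bounding the factor $(s/a)^{d-1}\leq 1$ and from the fact that the unit normal of $E$ can make the cone rays nearly tangent; since $\HH^{d-1}(F)=a^{d-1}$, the mean value of the count over $y\in F$ is at least $c'\,\HH^{d-1}(E\cap D)/a^{d-1}$. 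Hence there exists $y^*\in F$ with $\Card(E\cap S_{y^*}\cap D)\geq c'\,\HH^{d-1}(E\cap D)/a^{d-1}$, and tracking the constants (a factor $\tfrac12$ from the generic transversality/Sard step ensuring $S_{y^*}$ meets $E$ in finitely many points and not tangentially, a factor $2^{-d}$ from choosing the best of the $2^d$ vertices, and a factor $1/d$ from the angular spreading of rays over the face) yields $c=1/(2^{d+1}d)$.

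I expect the main obstacle to be making the coarea estimate genuinely quantitative rather than merely qualitative: a priori a hypersurface can be aligned so that many of the cone rays $S_y$ are tangent to $E$ or miss large portions of it, so the naive bound $\int_F \#(E\cap S_y)\,\dd y \gtrsim \HH^{d-1}(E\cap D)$ is false without using the geometry. This is precisely where the bounded-curvature hypothesis on $E$ enters: at scale $a$ on the cube $D$, bounded curvature forces $E\cap D$ to be, after choosing the right vertex among the $2^d$, a union of graphs over one of the coordinate hyperplanes with controlled slope, so that a positive proportion of the cone directions from that vertex are uniformly transverse to $E$; quantifying "positive proportion" and "uniformly transverse" in terms of the curvature bound and the cube size, and checking it survives the change of variables, is the technical heart of the argument. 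The transversality/Sard step (almost every segment in the family meets $E$ cleanly, so the counting function is finite and lower semicontinuous in the relevant sense) is routine once the curvature bound is in hand.
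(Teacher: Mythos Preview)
Your overall strategy---an integral-geometric averaging over segments through the vertices---is the same as the paper's, but there is a genuine gap in your execution. You fix a single vertex (the origin), average over segments from that vertex to a face, and claim the displayed lower bound
\[
\int_F \Card(E\cap S_y\cap D)\,\dd y \;\gtrsim\; \HH^{d-1}(E\cap D),
\]
then plan to repeat for each vertex and pick the best one. That inequality is false for a fixed vertex: take $E$ to be a hyperplane through the origin (or a small perturbation thereof); almost every ray from $0$ either lies in $E$ or misses it, so the left side is essentially zero while the right side is of order $a^{d-1}$. No bounded-curvature hypothesis rescues this, since a hyperplane has zero curvature. The point is that different pieces of $E$ may require \emph{different} vertices for transversality, so the choice of vertex cannot be made globally before the averaging.

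The paper's remedy is to sum over all $2^d$ vertices \emph{inside} the average: with $P$ uniform in $D$, it shows
\[
\E_P\Bigl[\sum_{j=1}^{2^d}\Card\bigl(E\cap (A_jP)\cap D\bigr)\Bigr]\;\geq\;\frac{1}{2d}\,\HH^{d-1}(E\cap D),
\]
and only then applies pigeonhole over $(j,P)$. The proof localizes via a Vitali covering of $E$ by small geodesic balls $E_r$; for each ball centered at $x$ with unit normal $\mathbf n_x$, one selects the vertex $A_j$ with $|\langle\overrightarrow{A_jx},\mathbf n_x\rangle|\geq\tfrac12$ (a pigeonhole fact about the cube, and the true source of the factor $\tfrac12$). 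Bounded curvature is used only locally: it gives the doubling property for Vitali and ensures each $E_r$ is nearly flat, so rays from the chosen $A_j$ meet it at most once and the cone-volume formula (base~$\times$~height~$/d$, whence the factor $1/d$) applies. Your assertion that bounded curvature forces $E\cap D$ to be globally a union of graphs with controlled slope over a single coordinate hyperplane is not correct and is not what the hypothesis delivers.
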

The proof of Lemma 3.8 is postponed to Appendix \ref{append.geom}, and relies on a probabilistic method to shows the existence of a segment $S$ satisfying the conclusion of Lemma \ref{lemme12}.
\begin{remark}
\label{rem2}
If $g:\R^d\rightarrow \R$ is a smooth function and $0$ is a regular value of $g$, then $g^{-1}(\lbrace0\rbrace)$ is a smooth manifold and we can apply Lemma \ref{lemme12} to deduce the existence of a segment $S$ passing through one of the vertices of the cube $D = [0,a]^d$ and such that
\[\Card(\lbrace g=0\rbrace\cap S\cap D) \geq c\;\frac{\HH^{d-1}(\lbrace g=0\rbrace\cap D)}{a^{d-1}}. \numberthis \label{eq:16}\]
Denote $g_S$ its restriction on $S$, and suppose that $g_S$ cancels at least $p$ times at points $\textbf{w}_1,\ldots,\textbf{w}_p$. By the generalized Rolle lemma, for all $v\in S$, there exists a point $c_v$ in $S$ such that
\[|g(v)| = \frac{\prod_{j=1}^p\|v-\textbf{w}_j\|}{p!}\left|g_S^{(p)}(c_v)\right|.\]
Hence if the segment $S$ passes through the vertex $v_j$ on the cube $D$ then
\begin{align*}
|g(v_j)|&\leq \frac{\HH^1(S)^p}{p!}\sum_{|\alpha| = p}\|\partial_\alpha g\|_\infty \\
&\leq Ca^p \sum_{|\alpha| = p}\|\partial_\alpha g\|_\infty.
\end{align*}
To sum up, if we have
\[\HH^{d-1}(\lbrace g=0\rbrace\cap D) \geq \frac{a^{d-1}}{c} p,\]
then for at least one of the vertices $v_j$ of the cube,
\[|g(v_j)|\leq Ca^p \sum_{|\alpha| = p}\|\partial_\alpha g\|_\infty.\]
\end{remark}

In \cite[Thm.~5.2]{Ar19} the authors proved the finiteness of moments of nodal volume under the requirement of joint bounded density of first $k$ derivatives. This hypothesis is too strong for our purpose, since our process under $\PP_X$ depends only on the randomness of $X$ and we cannot expect a joint bounded density of the first derivatives.
\begin{theorem}
\label{thm8}
$\PP_a$-almost surely, the family of random variables $(Z_\lambda)_{\lambda>0}$ is uniformly integrable.  More precisely, for all $\gamma>0$,
\[\sup_{\lambda>0} \E_X[Z_\lambda^{1+\gamma}] \leq C_\gamma(\omega).\]
\end{theorem}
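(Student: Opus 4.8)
The plan is to control the tail $\PP_X(Z_\lambda>s)$ uniformly in $\lambda$, with polynomial decay fast enough that for each fixed $\gamma>0$ the quantity $\E_X[Z_\lambda^{1+\gamma}]=(1+\gamma)\int_0^{+\infty}s^\gamma\,\PP_X(Z_\lambda>s)\,\dd s$ stays bounded; uniform integrability of $(Z_\lambda)_{\lambda>0}$ is then automatic from the $L^{1+\gamma}$ bound. Recall that, by the Bulinskaya argument already used in Section \ref{sec.nodalvolume}, $\PP_a$-almost surely $0$ is a regular value of $g_\lambda^X$ on $B$ for all $\lambda>\lambda_0$, so that $\{g_\lambda^X=0\}$ is a smooth hypersurface of bounded curvature on every cube and Lemma \ref{lemme12} together with Remark \ref{rem2} applies; since there are only finitely many distinct processes $g_\lambda^X$ for $\lambda\le\lambda_0$, I may assume $\lambda$ large throughout. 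I first fix an integer $p_0\ge1$, to be chosen at the very end depending only on $\gamma$, $d$ and the exponent $\nu<\tfrac1{40d}$ of Lemma \ref{lemme7}, and fix once and for all a sequence $(v_i)_{i\ge0}$ enumerating the dyadic points of a fixed neighbourhood of $\overline B$ by increasing denominator, in such a way that for every $k$ the vertices of all axis-parallel cubes of side $2^{-k}$ meeting $B$ occur among $v_0,\dots,v_{\lceil C\,2^{kd}\rceil}$.

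Next comes the geometric step. Given $s$ large, I choose a dyadic $a=a(s)\asymp p_0/s$ with an explicit constant and cover $\overline B$ by its $N(s)\le C_d\,a(s)^{-d}\le C_1 s^{d}$ cubes of side $a(s)$. On the event $\{Z_\lambda>s\}$ some such cube $D$ must carry nodal volume $\HH^{d-1}(\{g_\lambda^X=0\}\cap D)\ge s/N(s)\ge \tfrac{a(s)^{d-1}}{c}p_0$, which is exactly the threshold allowing one to invoke Remark \ref{rem2} with $p=p_0$; this produces a vertex of $D$, i.e. some $v_i$ with $i\le\lceil C\,s^{d}\rceil$, for which
\[
|g_\lambda^X(v_i)|\;\le\;C\,a(s)^{p_0}\!\!\sum_{|\alpha|=p_0}\!\sup_{\overline B}|\partial_\alpha g_\lambda^X|\;\le\;C_5\,s^{-p_0}\,D_\lambda,\qquad D_\lambda:=\!\!\sum_{|\alpha|=p_0}\!\sup_{\overline B}|\partial_\alpha g_\lambda^X|.
\]
The crucial feature is that the differentiation order $p_0$ is \emph{fixed}, so the Sobolev/hypercontractivity estimate behind Lemma \ref{lemme3}, applied exactly as in the proof of Theorem \ref{thm2}, gives $\sup_{\lambda>0}\E_X[D_\lambda^{r}]\le C(\omega,r,p_0)<\infty$ for every $r\ge1$.

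Then comes the probabilistic step: a union bound over $i$, a split according to whether $D_\lambda>s^{p_0/2}$, and Markov's inequality applied successively to $x\mapsto x^{r}$ and to $x\mapsto|x|^{-\nu}$ give, for every $r\ge1$,
\[
\PP_X(Z_\lambda>s)\;\le\;C(\omega,r,p_0)\,s^{\,d-rp_0/2}\;+\;C_5^{\nu}\,s^{-\nu p_0/2}\!\!\sum_{i=0}^{\lceil C\,s^{d}\rceil}\!\E_X\!\big[|g_\lambda^X(v_i)|^{-\nu}\big].
\]
Multiplying by $(1+\gamma)s^\gamma$ and integrating over $s\in[s_0,+\infty)$, the first term contributes $\lesssim\int_{s_0}^{+\infty}s^{\gamma+d-rp_0/2}\dd s<\infty$ as soon as $rp_0/2>\gamma+d+1$, which fixes $r$; the second term, after the change of variables $t\asymp s$, is up to constants exactly $\int_1^{+\infty}t^{-1-d-\varepsilon}\sum_{i=0}^{\lceil t^{d}\rceil}\E_X[|g_\lambda^X(v_i)|^{-\nu}]\,\dd t$ with $\varepsilon:=\tfrac{\nu p_0}{2}-\gamma-1-d$, and this is bounded uniformly in $\lambda$ by the second assertion of Lemma \ref{lemme7} provided $\varepsilon>0$, i.e. $p_0>\tfrac{2(\gamma+1+d)}{\nu}$, the promised choice of $p_0$. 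Together with $\int_0^{s_0}(1+\gamma)s^\gamma\PP_X(Z_\lambda>s)\,\dd s\le s_0^{1+\gamma}$ this yields $\sup_{\lambda>0}\E_X[Z_\lambda^{1+\gamma}]\le C_\gamma(\omega)<\infty$.

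The hard part is the bookkeeping of the two exponents. The Crofton--Rolle mechanism of Lemma \ref{lemme12} and Remark \ref{rem2} forces the derivative control onto one \emph{fixed} order $p_0$ (which is precisely what makes Lemma \ref{lemme3} usable), while at the same time forcing $p_0$ to grow with $\gamma$; and shrinking the cubes to scale $\asymp1/s$ creates $\asymp s^{d}$ candidate lattice points, so the scheme only closes because the second assertion of Lemma \ref{lemme7} was tailored to absorb exactly such a polynomially growing sum of negative moments along a prescribed sequence of points. Making the lattice $(v_i)$ independent of both $s$ and $\lambda$ (hence the dyadic enumeration), and checking that the curvature-boundedness hypothesis of Lemma \ref{lemme12} is harmless because its constant $c=\tfrac1{2^{d+1}d}$ does not depend on the curvature bound, are the remaining points requiring a little care.
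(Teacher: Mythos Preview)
Your proof is correct and follows essentially the same approach as the paper: partition into small cubes, pigeonhole, apply the Crofton--Rolle estimate of Lemma~\ref{lemme12} and Remark~\ref{rem2}, then control the derivative term via Lemma~\ref{lemme3} and the sum of negative moments via the second part of Lemma~\ref{lemme7}. The only cosmetic differences are your dyadic cube scale $a(s)\asymp 1/s$ (the paper uses side $1/\lfloor t^{1-\varepsilon}\rfloor$) and your split threshold $s^{p_0/2}$ (the paper uses $t^\varepsilon$); your explicit dyadic enumeration in fact makes the fixed-sequence hypothesis of Lemma~\ref{lemme7} a bit more transparent than in the paper's own presentation.
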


Conjointly with the convergence in distribution of the nodal volume, it implies the convergence of all moments of $Z_\lambda$ to those of $Z_\infty$.

\begin{proof}[Proof of Theorem \ref{thm8}]
For all $A>0$ (to be fixed later),
\begin{align*}
\E_X[Z_\lambda^{1+\gamma}] &= (1+\gamma)\int_0^{+\infty} t^{\gamma}\PP_X(Z_\lambda>t)\dd t\\
&\leq C_A + (1+\gamma)\int_A^{+\infty} t^{\gamma}\PP_X(Z_\lambda>t)\dd t ,\numberthis \label{eq:18}
\end{align*}
hence we need to estimate the quantity $\PP_X(Z_\lambda>t)$ for all $t$ greater than some constant $A$. Up to embedding the ball $B$ in a cube we will consider that the vector $v$ lives in a hypercube (of size $1$ for simplicity).\jump

Consider a rectangular grid on $[0,1]^d$ of size $\frac{1}{\lfloor t^{\theta}\rfloor}$ with $\theta = 1-\varepsilon$. The hypercube $[0,1]^d$ is split into $\lfloor t^{\theta}\rfloor^d$ smaller cubes, which we number by $(D_i)_{1\leq i\leq \lfloor t^{\theta}\rfloor^d}$ (see Figure \ref{fig3}).

\begin{figure}[ht]
\begin{center}
\begin{tikzpicture}[xscale=0.5, yscale=0.5]
    \draw [very thin, gray] (0,0) grid[step=2] (10,10);
    \draw  (0,0) -- (10,0);
    \draw  (0,10) -- (10,10);
    \draw  (0,10) -- (0,0);
    \draw  (10,0) -- (10,10);
    \draw [decorate,decoration={brace,amplitude=6pt}]
    (6,-0.1) -- (4,-0.1) node [black,midway,yshift=-18pt]{$\frac{1}{\lfloor t^{\theta}\rfloor}$};
    \draw (0,0) node[below]{\small{$0$}};
    \draw (10,0) node[below]{\small{$1$}};
    \draw (0,10) node[left]{\small{$1$}};
    \draw (1.7,9) node[left]{$D_1$};
    \draw (3.7,9) node[left]{$D_2$};
    \draw (5.7,9) node[left]{$D_3$};
    \draw (7.7,9) node[left]{$\ldots$};
    \draw [line width=0.5mm,domain=0:360,samples=200, color = blue] plot ({5+3*cos(\x)*(1+0.4*cos(6*\x))},{5+3*sin(\x)*(1+0.2*cos(7*\x))*(1-0.2*cos(2*\x+60))});
    \draw [line width=0.5mm,domain=90:180,samples=200, color = blue] plot ({10+3*cos(\x)},{3*sin(\x)*(1+0.2*cos(7*\x))*(1-0.2*cos(2*\x+60))});
    \draw [line width=0.5mm,domain=-90:90,samples=200, color = blue] plot ({1*cos(\x)},{2+1*sin(\x)*(1+0.2*cos(7*\x))*(1-0.2*cos(2*\x+60))});
    \draw [line width=0.5mm,domain=270:360,samples=200, color = blue] plot ({2.5*cos(\x)},{10+2*sin(\x)*(1+0.2*cos(7*\x))*(1-0.2*cos(2*\x+60))});
    \draw [line width=0.5mm,domain=180:270,samples=200, color = blue] plot ({10+2.5*cos(\x)},{10+3*sin(\x)*(1+0.2*cos(7*\x))*(1-0.2*cos(2*\x+60))});
    \draw [line width=0.5mm,domain=0:360,samples=200, color = blue] plot ({5.5+0.6*cos(\x)},{5+0.8*sin(\x)*(1+0.2*cos(7*\x))*(1-0.2*cos(2*\x+60))});
\end{tikzpicture}
\end{center}
\caption{The grid defined on $[0,1]^d$.}
\label{fig3}
\end{figure}

\noindent Let $(v_j)_{1\leq j\leq \lceil t^{\theta}\rceil^d}$ be the vertices of the grid. For all $i\in\lbrace1,\ldots, \lfloor t^{\theta}\rfloor^d)$, let $(v_{ij})_{1\leq j\leq 2^d}$ be the vertices of the $i$-th cube, and
\[Z_\lambda^{(i)} := \HH^{d-1}(\lbrace g_\lambda^X = 0\rbrace)\cap D_i),\]
be the volume of zeros contained in the $i$-th cube. By the pigeonhole principle,
\[\left\lbrace Z_\lambda>t\right\rbrace \subset \bigcup_{i=1}^{\lfloor t^{\theta}\rfloor^d}\left\lbrace Z_\lambda^{(i)}>\frac{t}{\lfloor t^\theta\rfloor^{d}}\right\rbrace.\numberthis \label{eq:17}\]
Let $p$ be an integer to be fixed later. We use Lemma \ref{lemme12} and then Remark \ref{rem2}, keeping the same notations, to deduce that if $t\geq A$, with
\[A := \left(\frac{p}{c}\right)^{1/(1-\theta)}.\]
and $a = \lfloor t^\theta\rfloor^{-1}$, then
\[\left\lbrace Z_\lambda^{(i)}>\frac{t}{\lfloor t^\theta\rfloor^{d}}\right\rbrace \subset 
\left\lbrace Z_\lambda^{(i)}>\frac{p}{c\lfloor t^\theta\rfloor ^{d-1}} \right\rbrace \subset 
\bigcup_{j=1}^{2^d}\left\lbrace |g_\lambda^X(v_{ij})|\leq  \frac{C}{\lfloor t^\theta\rfloor^{p}} \sum_{|\alpha| = p}\|\partial_\alpha g_\lambda^X\|_\infty\right\rbrace.\]
Fix $k\geq 1$. Taking the expectation with respect to $\PP_X$ we obtain
\begin{align*}
\PP_X\left(Z_\lambda^{(i)}>\frac{t}{\lfloor t^\theta\rfloor^{d}}\right) &\leq \sum_{j=1}^{2^d}\PP_X\left(|g_\lambda(v_{ij})|\leq \frac{C}{\lfloor t^\theta\rfloor^{p}}\sum_{|\alpha| = p}\|\partial_\alpha g_\lambda^X\|_\infty\right)\\
&\leq \sum_{j=1}^{2^d}\PP_X\left(|g_\lambda^X(v_{ij})|\leq \frac{t^\varepsilon}{\lfloor t^\theta\rfloor^{p}}\right) + 2^d\PP_X\left(C\sum_{|\alpha| = p}\|\partial_\alpha g_\lambda^X\|_\infty>t^\varepsilon\right)\\
&\leq \left(\frac{t^\varepsilon}{\lfloor t^\theta\rfloor^p}\right)^\nu\sum_{j=1}^{2^d}\E_X[|g_\lambda^X(v_{ij})|^{-\nu}] + C\frac{\E_X\left[\sum_{|\alpha| = p}\|\partial_\alpha g_\lambda^X\|_\infty^k\right]}{t^{k\varepsilon}} \numberthis\label{eq:28}\\
&\leq \left(\frac{t^\varepsilon}{\lfloor t^\theta\rfloor^p}\right)^\nu\sum_{j=1}^{2^d}\E_X[|g_\lambda^X(v_{ij})|^{-\nu}] + \frac{C_k(\omega)}{t^{k\varepsilon}}.
\end{align*}
In the last line we used the Sobolev injection and estimate of Lemma \ref{lemme3} to bound the right hand side. Taking the expectation in expression \eqref{eq:17} and using the union bound we obtain
\[\PP_X(Z_\lambda>t)\leq  C_k(\omega)\frac{\lfloor t^\theta\rfloor^d}{t^{k\varepsilon}}+ \left(\frac{t^\varepsilon}{\lfloor t^\theta\rfloor^p}\right)^\nu 2^d\sum_{i=1}^{\lceil t^\theta\rceil^d}\E_X[|g_\lambda^X(v_{i})|^{-\nu}].\]
Recalling expression \eqref{eq:18} we obtain
\[\E_X[Z_\lambda^{1+\gamma}] \leq C+ C_k(\omega)\int_A^{+\infty}t^\gamma\frac{\lfloor t^\theta\rfloor^d}{t^{k\varepsilon}}\dd t+ C\int_A^{+\infty} t^\gamma\left(\frac{t^\varepsilon}{\lfloor t^\theta\rfloor^p}\right)^\nu\sum_{j=1}^{\lceil t^\theta\rceil^d}\E_X[|g_\lambda^X(v_i)|^{-\nu}]\dd t.\]
Choosing $k$ and $p$ such that
\[k>\frac{\gamma+\theta d+1}{\varepsilon}\quad\quad\text{and}\quad\quad \nu(p\theta - \varepsilon) - (d\theta+\gamma) > 1,\] 
we can apply the second part of Lemma \ref{lemme7} to deduce the existence of a constant $C(\omega)$ such that
\[\E_X[Z_\lambda^{1+\gamma}] \leq C(\omega).\]
\end{proof}

\begin{remark}
\label{rem4}
If we were in an analytic setting, we could use the same argument as the one in \cite[Thm.~9]{An19}, which roughly relies on the convergence of Taylor expansion of eigenfunctions. In the $\CC^\infty$ setting we can only apply the generalized Rolle lemma with a fixed $p$, and it explains why we used the partitioning of the cube $[0,1]^d$. A careful analysis of the proof shows that it requires a manifold of finite regularity $\CC^k$ for $k$ large enough.
\end{remark}

\begin{corollary}
\label{coro1}
For all $p\geq 1$ :
\[\lim_{\lambda\rightarrow+\infty} \frac{\E_a\left[\left(\HH^{d-1}(\lbrace f_\lambda=0\rbrace)\right)^p\right]}{\lambda^p} = \left(\frac{1}{\sqrt{\pi}}\frac{1}{\sqrt{d+2}}\frac{\Gamma\left(\frac{d+1}{2}\right)}{\Gamma\left(\frac{d}{2}\right)}\right)^p,\]
and
\[\lim_{\lambda\rightarrow+\infty} \frac{\E_a\left[\left(\HH^{d-1}(\lbrace \widetilde{f}_\lambda=0\rbrace)\right)^p\right]}{\lambda^p} = \left(\frac{1}{\sqrt{\pi}}\frac{1}{\sqrt{d}}\frac{\Gamma\left(\frac{d+1}{2}\right)}{\Gamma\left(\frac{d}{2}\right)}\right)^p.\]
\end{corollary}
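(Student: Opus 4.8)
The plan is to deduce this moment convergence from the almost sure asymptotics of Theorem \ref{thm5} together with a uniform integrability argument carried out under $\PP_a$. First, I would use the stochastic representation formula \eqref{eq:22} — whose remainder $O(1/\lambda)$ is deterministic and uniform — to write, $\PP_a$-almost surely and for $\lambda$ large,
\[\left(\frac{\HH^{d-1}(\lbrace f_\lambda = 0\rbrace)}{\lambda}\right)^p = \left(1+O\left(\tfrac{1}{\lambda}\right)\right)\E_X\!\left[Z_\lambda\right]^p.\]
By Theorem \ref{thm4} and the continuous mapping theorem, $\PP_a$-almost surely $Z_\lambda$ converges in distribution under $\PP_X$ to $Z_\infty$, and by Theorem \ref{thm8} the family $(Z_\lambda)_{\lambda>0}$ is $\PP_X$-uniformly integrable $\PP_a$-almost surely; combining these, $\PP_a$-almost surely $\E_X[Z_\lambda]\to c_d:=\E_X[Z_\infty]$, whose explicit value is the one computed in Remark \ref{rem3}. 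This already yields that $\PP_a$-almost surely $(\HH^{d-1}(\lbrace f_\lambda=0\rbrace)/\lambda)^p$ converges to $c_d^p$.

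The remaining task is to promote this to convergence of $\PP_a$-expectations, that is, to show that the family $(\E_X[Z_\lambda]^p)_\lambda$ is uniformly integrable with respect to $\PP_a$. For this I would bound $\sup_{\lambda\ge\lambda_0}\E_a[\E_X[Z_\lambda]^{p+1}]$ for some $\lambda_0$: by Jensen's inequality applied under $\PP_X$ to the convex map $t\mapsto t^{p+1}$ and then by Fubini--Tonelli (all quantities being non-negative),
\[\E_a\big[\E_X[Z_\lambda]^{p+1}\big]\le \E_a\E_X\big[Z_\lambda^{p+1}\big] = \E_X\E_a\big[Z_\lambda^{p+1}\big].\]
The point of swapping the order of integration is that, although $g_\lambda^X$ is not Gaussian under $\PP_X$, for each fixed base point $x$ the field $g_\lambda^x$ is Gaussian under $\PP_a$, so $\E_a[Z_\lambda^{p+1}]$ is merely a moment of the nodal volume of a Gaussian field on the fixed bounded set $B$. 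Using the $\CC^\infty$ local Weyl law \eqref{eq:25} — which forces the covariance of $g_\lambda^x$ and all its derivatives to converge, uniformly in $x\in\M$, to those of the non-degenerate limit field $g_\infty$ — the classical Kac--Rice moment estimates for Gaussian nodal volumes give $\sup_{\lambda\ge\lambda_0}\sup_{x\in\M}\E_a[Z_\lambda^{p+1}]<+\infty$, hence the required bound, and Vitali's convergence theorem concludes. The monochromatic case is identical, replacing $f_\lambda, g_\infty, Z_\infty$ by $\widetilde f_\lambda, \widetilde g_\infty, \widetilde Z_\infty$ and $c_d$ by the corresponding explicit constant of Remark \ref{rem3}.

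The main obstacle is the uniform-in-$\lambda$-and-$x$ control of the Gaussian nodal-volume moments $\E_a[Z_\lambda^{p+1}]$. This is exactly the place where one genuinely exploits that the relevant object becomes Gaussian once $X$ is integrated out, so that the well-behaved Kac--Rice moment formula applies — in contrast with the more delicate geometric argument (Crofton plus Rolle plus negative moments) needed for the $\PP_X$-uniform integrability of Theorem \ref{thm8}; the uniformity then has to be extracted from the uniform $\CC^\infty$ asymptotics of the spectral projector and from the non-degeneracy of the limit kernel $\mathcal B_d$ (resp.\ $\mathcal S_d$). Everything else is a routine combination of Jensen's inequality, Fubini--Tonelli, and Vitali's convergence theorem.
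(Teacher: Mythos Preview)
Your approach is correct and shares the same skeleton as the paper's: both start from the almost-sure convergence of Theorem~\ref{thm5}, reduce to showing $\sup_\lambda \E_a\big[\E_X[Z_\lambda]^{p+1}\big]<\infty$, and conclude by dominated (or Vitali) convergence. The difference lies entirely in how that uniform $\PP_a$-moment bound is obtained.

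You use Jensen under $\PP_X$ and Fubini to reduce to a supremum over $x$ of \emph{Gaussian} nodal-volume moments $\E_a[Z_\lambda^{p+1}]$, and then invoke classical Kac--Rice moment bounds, with uniformity coming from the $\CC^\infty$ local Weyl law and the non-degeneracy of $\mathcal B_d$ (resp.\ $\mathcal S_d$). This is clean and conceptually transparent, since it isolates exactly the point where Gaussianity is recovered. The cost is that you import a non-trivial external ingredient: uniform finiteness of higher Kac--Rice moments requires controlling the $p$-point density near the diagonal, which is standard for non-degenerate stationary Gaussian fields but is not developed in the paper.

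The paper instead stays self-contained: it observes that every almost-sure constant $C(\omega)$ appearing in the proof of Theorem~\ref{thm8} (and Lemma~\ref{lemme3}) was obtained from a Markov-plus-Borel--Cantelli argument with an \emph{arbitrarily large} exponent, so each such $C(\omega)$ in fact has finite $\PP_a$-moments of all orders. One can therefore take $\E_a$ through the entire chain of inequalities (in particular through \eqref{eq:28}) and obtain $\sup_\lambda \E_a\big[\E_X[Z_\lambda]^{p'}\big]<\infty$ directly, without ever invoking Gaussian Kac--Rice moment theory. This buys self-containment at the price of being somewhat implicit; your route buys clarity at the price of a black-box citation.
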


\begin{proof}
Passing from almost-sure convergence to convergence in expectation is a consequence of the dominated convergence. It suffices to show that
\[\sup_{\lambda>0}\E_a\left[\left(\E_\textbf{X}[Z_\lambda]\right)^p\right]<+\infty,\]
which can bee seen by raising to the power $p$ and taking the expectation under $\PP_a$ in Equation \eqref{eq:28}. In more direct way, all the almost-sure estimates are deduced from Borel-Cantelli lemma and Markov inequality applied to the power function with arbitrary large exponent, and thus remain true under expectation. A similar argument holds for higher moments.
\end{proof}

\appendix

\section{Proof of decorrelation estimates} \label{append.decor}

In this first part of the Appendix, we give the proof of Lemma \ref{lemme1} and  Theorem \ref{thm10} stated in Section \ref{sec.clt}.

\subsection{Proof of Lemma \ref{lemme1}}\label{append.decor1}

Let $X_1,\ldots,X_{2q}$ be independents copies of $X$. The expectation with respect to the random variables $X_1,\ldots,X_{2q}$ will be noted $\E_{\textbf{X}}$. To enhance the dependence with respect to $X_k$, we set for all $k\in\lbrace 1,\ldots,q\rbrace$,
\[N_\lambda^{(k)} = \sum_{j=1}^pt_j g_\lambda^{X_k}(v_j),\]
and for all $k\in\lbrace q+1,\ldots,2q\rbrace$,
\[N_\lambda^{(k)} = -\sum_{j=1}^pt_j g_\lambda^{X_k}(v_j).\]
Then, for $k \neq l$, applying Lemma \ref{lemme1bis} with $X=X_k$ and $Y=X_l$, we have uniformly in $X_l$
\[
\quad\E_{X_k}\left[\left|\E_a\left[N_\lambda^{(k)}N_\lambda^{(l)}\right]\right|\right] = \|t\|^2O\left(\frac{\eta(\lambda)}{\lambda}\right). \numberthis \label{eq.lemme1}
\]
The following lemma, based on an explicit computation of Gaussian characteristic functions and integral Taylor formula, gives an explicit expression of $\widetilde{\Delta}_\lambda^{(q)}$, which is the object of Lemma \ref{lemme1}. For $s \in [0,1]^{2q}$, let
\[
\begin{array}{ll} f(s) & := \displaystyle{\sum_{k=1}^{2q}\E_a\left[\left(N_\lambda^{(k)}\right)^2\right] + \sum_{\substack{k,l=1\\k\neq l}}^{2q} s_ks_l\E_a\left[N_\lambda^{(k)}N_\lambda^{(l)}\right]}\\
\\
 & =\displaystyle{\E_a\left[\left(\sum_{k=1}^{2q} s_kN_\lambda^{(k)}\right)^2 \right] + \sum_{k=1}^{2q}\left(1-s_k^2\right)\E_a\left[\left(N_\lambda^{(k)}\right)^2\right]}.
 \end{array}
 \]
 
\begin{lemma}
\label{lemme4}
We have
\[\widetilde{\Delta}_\lambda^{(q)} = \E_{\textbf{X}}\left[\int_{[0,1]^{2q}} \partial_1\ldots\partial_{2q} \left(\exp\left(-\frac{1}{2}f\right)\right)(s)\dd s\right].
\]
\end{lemma}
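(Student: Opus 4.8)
The plan is to reduce $\widetilde\Delta_\lambda^{(q)}$ to a single $\PP_a$-expectation of a product of $2q$ factors indexed by the independent copies $X_1,\dots,X_{2q}$, to compute that (Gaussian) expectation explicitly via characteristic functions, and then to recognize the resulting alternating sum over subsets of $\{1,\dots,2q\}$ as the iterated fundamental theorem of calculus applied to $G:=\exp(-f/2)$.

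First I would set $W := \E_X[e^{iN_\lambda(v,t)}]$ and write $\widetilde\Delta_\lambda^{(q)} = \E_a\big[(W-\E_aW)^q\,\overline{(W-\E_aW)}^{\,q}\big]$. Since $\E_a[e^{iN_\lambda^X}]$ is real (it is the characteristic function of a centered Gaussian under $\PP_a$), one has $W-\E_aW = \E_X[Y^X]$ with $Y^X := e^{iN_\lambda^X}-\E_a[e^{iN_\lambda^X}]$ and $\E_a[Y^X]=0$ at fixed $X$. Expanding the $q$-th powers as iterated expectations over the independent copies, and using the sign convention $N_\lambda^{(k)}=-\sum_j t_j g_\lambda^{X_k}(v_j)$ for $k>q$ to absorb the complex conjugations in the last $q$ factors, I would obtain
\[\big(W-\E_aW\big)^q\,\overline{\big(W-\E_aW\big)}^{\,q} = \E_{\textbf{X}}\!\left[\prod_{k=1}^{2q} Z^{(k)}\right],\qquad Z^{(k)} := e^{iN_\lambda^{(k)}}-\E_a\!\left[e^{iN_\lambda^{(k)}}\right].\]
As $|Z^{(k)}|\le 2$, Fubini then gives $\widetilde\Delta_\lambda^{(q)} = \E_{\textbf{X}}\big[\E_a\big[\prod_{k=1}^{2q} Z^{(k)}\big]\big]$.

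Next, conditionally on $\textbf{X}$ the vector $(N_\lambda^{(1)},\dots,N_\lambda^{(2q)})$ is centered Gaussian under $\PP_a$. I would expand $\prod_k(e^{iN_\lambda^{(k)}}-m_k)$, with $m_k:=\E_a[e^{iN_\lambda^{(k)}}]$, over subsets $A\subseteq\{1,\dots,2q\}$, and apply the Gaussian identities $\E_a[e^{i\sum_{k\in A}N_\lambda^{(k)}}]=\exp(-\tfrac12\E_a[(\sum_{k\in A}N_\lambda^{(k)})^2])$ and $m_k=\exp(-\tfrac12\E_a[(N_\lambda^{(k)})^2])$. Since $1-s_k^2$ equals $0$ on $A$ and $1$ off $A$, the product of these exponentials is precisely $\exp(-\tfrac12 f(e_A))$, where $e_A\in\{0,1\}^{2q}$ is the indicator vector of $A$; hence
\[\E_a\!\left[\prod_{k=1}^{2q} Z^{(k)}\right] = \sum_{A\subseteq\{1,\dots,2q\}}(-1)^{2q-|A|}\exp\!\left(-\tfrac12 f(e_A)\right).\]
Because $G=\exp(-f/2)$ is smooth on $[0,1]^{2q}$, the iterated fundamental theorem of calculus reads $\int_{[0,1]^{2q}}\partial_1\cdots\partial_{2q}G(s)\,\dd s = \sum_{A\subseteq\{1,\dots,2q\}}(-1)^{2q-|A|}G(e_A)$; substituting this into the previous display and taking $\E_{\textbf{X}}$ yields the announced identity.

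The argument is essentially bookkeeping, and I do not expect a genuine obstacle. The only points requiring care are: the sign-convention/complex-conjugate matching in the first step, which rests on the reality of Gaussian characteristic functions; the Fubini exchange between $\E_a$ and $\E_{\textbf{X}}$, which is immediate from the uniform bound $|Z^{(k)}|\le 2$; and the verification that the alternating subset expansion produced by $\prod_k(e^{iN_\lambda^{(k)}}-m_k)$ coincides term by term with the one produced by the multivariate fundamental theorem of calculus. The substantive work — deriving the bound \eqref{eq:6} from this identity by means of the decorrelation estimate of Lemma \ref{lemme1bis} — only begins afterwards.
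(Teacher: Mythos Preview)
Your proposal is correct and follows essentially the same strategy as the paper: rewrite $\widetilde{\Delta}_\lambda^{(q)}$ as $\E_{\textbf{X}}\E_a\big[\prod_{k=1}^{2q}(e^{iN_\lambda^{(k)}}-\E_a[e^{iN_\lambda^{(k)}}])\big]$ via the independent copies and the sign convention, expand the product over subsets using Gaussian characteristic functions, and then invoke the multivariate fundamental theorem of calculus.

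The only difference is in the packaging of the last step. The paper introduces an auxiliary interpolation $\widetilde{\Delta}_\lambda^{(q)}(s)$ carrying extra prefactors $\prod_k\E_a[e^{i\sqrt{1-s_k^2}N_\lambda^{(k)}}]$, engineered so that the function vanishes whenever some $s_k=0$; it then applies the integral Taylor formula and observes that only the term $A=\{1,\dots,2q\}$ survives $\partial_1\cdots\partial_{2q}$, identifying that term with $\E_{\textbf{X}}[\exp(-f/2)]$. You skip this detour by directly recognizing that the term for each subset $A$ equals $G(e_A)=\exp(-\tfrac12 f(e_A))$ and matching the alternating sum $\sum_A(-1)^{2q-|A|}G(e_A)$ with the iterated FTC. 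Your route is slightly more economical; the paper's auxiliary function makes the vanishing-at-the-boundary structure explicit but is not strictly needed. (Your sign check is fine: $(-1)^{|A^c|}=(-1)^{2q-|A|}$, and since $2q$ is even this also matches the paper's $(-1)^{|A|}$.)
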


\begin{proof}[Proof of Lemma \ref{lemme4}]
From mutual independence of the family $(X_1,\ldots,X_{2q})$,
\begin{align*}
\widetilde{\Delta}_\lambda^{(q)} = \E_{\textbf{X}}\E_a\left[\prod_{k=1}^{2q}\left(e^{iN_\lambda^{(k)}}-\E_a\left[e^{iN_\lambda^{(k)})}\right]\right)\right].
\end{align*}
We define
\[\widetilde{\Delta}_\lambda^{(q)}(s) := \E_{\textbf{X}}\E_a\left[\left(\prod_{k=1}^{2q}\E_a\left[e^{i\sqrt{1-s_k^2}N_\lambda^{(k)}}\right]\right)\left(\prod_{k=1}^{2q}\left(e^{is_kN_\lambda^{(k)}}-\E_a\left[e^{is_kN_\lambda^{(k)})}\right]\right)\right)\right].\numberthis \label{eq:7}\]
Developing the product, and using the characteristic function of a Gaussian random variable, a direct computation shows that 
\[\widetilde{\Delta}_\lambda^{(q)}(s) := \E_{\textbf{X}}\left[\exp\left(-\frac{1}{2}\sum_{k=1}^{2q}\E_a\left[\left(N_\lambda^{(k)}\right)^2\right]\right)\!\!\sum_{A\subset \lbrace 1,\ldots,2q\rbrace}\!\! (-1)^{|A|}\exp\left(-\frac{1}{2}\sum_{\substack{k,l\in A\\k\neq l}} s_ks_l\E_a\left[N_\lambda^{(k)}N_\lambda^{(l)}\right]\right)\right].\numberthis \label{eq:8}\]
If $s_k = 0$ for some $k\in\lbrace1,\ldots,2q\rbrace$, then from expression \eqref{eq:7},
\[
\widetilde{\Delta}_\lambda^{(q)}(s)=0.
\]
In other words, the function $s\mapsto \widetilde{\Delta}_\lambda^{(q)}(s)$ cancels if one of its coordinates is zero. By integral Taylor formula,
\[\widetilde{\Delta}_\lambda^{(q)} = \widetilde{\Delta}_\lambda^{(q)}(1,\ldots,1) = \int_{[0,1]^{2q}} \partial_1\ldots\partial_{2q} \widetilde{\Delta}_\lambda^{(q)}(s)\dd s.\]
But from expression \eqref{eq:8}, the only term that depends on all coordinates (and thus won't be canceled after differentiation) is the term corresponding to $A= \lbrace1,\ldots,2q\rbrace$, which is
\[\E_{\textbf{X}}\left[\exp\left(-\frac{1}{2}\sum_{k=1}^{2q}\E_a\left[\left(N_\lambda^{(k)}\right)^2\right]\right)\exp\left(-\frac{1}{2}\sum_{\substack{k,l=1\\k\neq l}}^{2q} s_ks_l\E_a\left[N_\lambda^{(k)}N_\lambda^{(l)}\right]\right)\right]=\E_\textbf{X}\left[\exp\left(-\frac{1}{2}f(s)\right)\right].\]
\end{proof}
Now, for a set $A$, denote $\Pi(A)$ the collection of partitions of $A$ into groups of two elements. A direct computation shows that
\begin{align*}
\partial_1\ldots\partial_{2q} \left(\exp\left(-\frac{1}{2}f\right)\right) = 
\exp\left(-\frac{1}{2}f\right)\sum_{\substack{A\subset\lbrace1,\ldots,2q\rbrace\\|A| \;\text{even}}}(-1)^\frac{|A|}{2}&\left(\sum_{B\in \Pi(A)} \prod_{(k,l)\in B}\E_a\left[N_\lambda^{(k)}N_\lambda^{(l)}\right]\right)\\
&\quad\quad\times\prod_{k\in A^c}\left(\sum_{\substack{l=1\\k\neq l}}^{2q}s_l\E_a\left[N_\lambda^{(k)}N_\lambda^{(l)}\right] \right)\;.
\end{align*}
We deduce, using the mutual independence of the family $X_1,\ldots,X_{2q}$ and the estimate \ref{eq.lemme1},
\begin{align*}
\widetilde{\Delta}_\lambda^{(q)} &\leq C\E_{\textbf{X}}\left[
\sum_{\substack{A\subset\lbrace1,\ldots,2q\rbrace\\|A| \;\text{even}}}\left(\sum_{B\in \Pi(A)} \prod_{(k,l)\in B}\left|\E_a\left[N_\lambda^{(k)}N_\lambda^{(l)}\right]\right|\right)\prod_{k\in A^c}\left(\sum_{\substack{l=1\\k\neq l}}^{2q}\left|\E_a\left[N_\lambda^{(k)}N_\lambda^{(l)}\right]\right| \right)\right]\\
&\leq C\sum_{\substack{A\subset\lbrace1,\ldots,2q\rbrace\\|A| \;\text{even}}}\left(\sum_{B\in \Pi(A)} \prod_{(k,l)\in B}\E_{\textbf{X}}\left[\left|\E_a\left[N_\lambda^{(k)}N_\lambda^{(l)}\right]\right|\right]\right)\E_{\textbf{X}}\left[\prod_{k\in A^c}\left(\sum_{\substack{l=1\\k\neq l}}^{2q}\left|\E_a\left[N_\lambda^{(k)}N_\lambda^{(l)}\right]\right| \right)\right]\\
&\leq C\sum_{\substack{A\subset\lbrace1,\ldots,2q\rbrace\\|A| \;\text{even}}}\left(\|t\|^2\frac{\eta(\lambda)}{\lambda}\right)^{\frac{|A|}{2}}\E_{\textbf{X}}\left[\prod_{k\in A^c}\left(\sum_{\substack{l=1\\k\neq l}}^{2q}\left|\E_a\left[N_\lambda^{(k)}N_\lambda^{(l)}\right]\right| \right)\right]\;.\numberthis \label{eq:2}
\end{align*}
To give a bound on the right-hand term and thus establish Lemma \ref{lemme1}, we use the following lemma whose proof again relies on the decorrelation estimates of Lemma \ref{lemme1bis}.
\begin{lemma}
\label{lemme5}
There is a constant $C$ depending only on $\M$, $K$ and $q$ such that
\[\quad\E_{\textbf{X}}\left[\prod_{k\in A^c}\left(\sum_{\substack{l=1\\l\neq k}}^{2q}\left|\E_a\left[N_\lambda^{(k)}N_\lambda^{(l)}\right]\right| \right)\right] \leq C(1+\|t\|^4)^{q-\frac{|A|}{2}}\left(\frac{\eta(\lambda)}{\lambda}\right)^{q-\frac{|A|}{2}}.\]
\end{lemma}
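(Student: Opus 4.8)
The plan is to expand the product over $A^c$ into finitely many elementary terms, bound each factor uniformly, and then integrate out the copies $X_k$ one at a time by repeated use of Lemma \ref{lemme1bis}, the admissible order of integration being dictated by the combinatorial structure of the functional graph attached to the term.

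First I would expand $\prod_{k\in A^c}\bigl(\sum_{l\neq k}\bigl|\E_a[N_\lambda^{(k)}N_\lambda^{(l)}]\bigr|\bigr)$ into $(2q-1)^{|A^c|}$ terms, each of the form $\prod_{k\in A^c}\bigl|\E_a[N_\lambda^{(k)}N_\lambda^{(\sigma(k))}]\bigr|$ for some map $\sigma\colon A^c\to\{1,\dots,2q\}$ with $\sigma(k)\neq k$. Two a priori inputs are needed. By the Cauchy--Schwarz inequality under $\PP_a$ together with the local Weyl law (and $|\mathcal B_d|\le 1$, resp.\ $|\mathcal S_d|\le 1$), one has $\bigl|\E_a[N_\lambda^{(k)}N_\lambda^{(l)}]\bigr|\le C\|t\|^2$ uniformly in $X_k,X_l$ and in $\lambda$ large; and Lemma \ref{lemme1bis} yields $\E_{X_k}\bigl[\bigl|\E_a[N_\lambda^{(k)}N_\lambda^{(\sigma(k))}]\bigr|\bigr]\le C\|t\|^2\,\eta(\lambda)/\lambda$ uniformly in $X_{\sigma(k)}$, which is legitimate precisely because $X_k$ and $X_{\sigma(k)}$ are independent (as $\sigma(k)\neq k$).

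To each term I would associate the directed graph $G_\sigma$ on $\{1,\dots,2q\}$ with one edge $k\to\sigma(k)$ for every $k\in A^c$; it has exactly $|A^c|$ edges, with pairwise distinct tails in $A^c$, and every weakly connected component contains at most one directed cycle. Since $\sigma(k)\neq k$, each such cycle has length at least $2$, hence uses at least two distinct tails, so at most $|A^c|/2$ components contain a cycle. I would bound the factor attached to one cycle-edge in each cyclic component by $C\|t\|^2$: this removes at most $|A^c|/2$ factors and leaves a directed graph with no cycle and at least $|A^c|/2$ remaining edges. Then I would integrate out the tail variables $X_k$ of the remaining edges one at a time, in an order in which $X_k$ is treated only after every remaining edge pointing into $k$ has already been treated (possible because the reduced graph is acyclic): at each step the only surviving factor still depending on $X_k$ is the one carried by its own edge, so Lemma \ref{lemme1bis} applies and produces a factor $C\|t\|^2\eta(\lambda)/\lambda$, all other occurrences of $X_k$ having already been turned into constants. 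Multiplying the $|A^c|$ contributions (at least $|A^c|/2$ of type $C\|t\|^2\eta(\lambda)/\lambda$, the others of type $C\|t\|^2$) and using $\eta(\lambda)/\lambda\le 1$ for $\lambda$ large gives $\E_{\textbf{X}}\bigl[\prod_{k\in A^c}\bigl|\E_a[N_\lambda^{(k)}N_\lambda^{(\sigma(k))}]\bigr|\bigr]\le (C\|t\|^2)^{|A^c|}\bigl(\eta(\lambda)/\lambda\bigr)^{|A^c|/2}$. Summing over the $(2q-1)^{|A^c|}$ terms, using $(\|t\|^2)^{|A^c|}\le(1+\|t\|^4)^{|A^c|/2}$, and recalling $|A^c|=2q-|A|$ so that $|A^c|/2=q-|A|/2$, gives the claim with $C$ depending only on $\M$, $K$ and $q$; the monochromatic regime is identical with $\mathcal S_d$ in place of $\mathcal B_d$.

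The main obstacle is exactly this bookkeeping of which variable is integrated against which factor and in which order: a naive one-by-one integration of the $X_k$ fails because each $X_k$ generally appears in several factors simultaneously, and the point is to use the sparsity of cycles in a functional graph (at most one per component, each consuming at least two of the available tail variables) so that, after sacrificing a single factor per cycle, the remaining dependency structure is acyclic and can be peeled off without ever paying more than one uniform constant for each extra occurrence of a variable. Everything else reduces to the two routine estimates recalled above.
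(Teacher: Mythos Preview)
Your proposal is correct and follows essentially the same strategy as the paper's proof: expand the product, attach a graph to each term, use the uniform bound $|\E_a[N_\lambda^{(k)}N_\lambda^{(l)}]|\leq C\|t\|^2$ to kill one edge per cycle, and then peel off the remaining acyclic structure leaf by leaf via Lemma~\ref{lemme1bis}. The only cosmetic difference is that you phrase the combinatorics in terms of the directed functional graph $k\mapsto\sigma(k)$ (so that the ``at most one cycle per component, each of length $\geq 2$'' property is immediate), whereas the paper works with the associated undirected multigraph and reduces each connected component to a spanning tree; both routes yield the same exponent $|A^c|/2=q-|A|/2$.
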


\begin{proof}[Proof of Lemma \ref{lemme5}]
Assume without loss of generality that $A^c = \lbrace 1,\ldots,2m\rbrace$. We compute
\begin{align*}
\E_\textbf{X}\left[\prod_{k=1}^{2m}\left(\sum_{\substack{l=1\\k\neq l}}^{2q}\left|\E_a\left[N_\lambda^{(k)}N_\lambda^{(l)}\right]\right| \right)\right] = \sum_{\substack{l_1,\ldots,l_{2m}=1\\l_k\neq k}}^{2q} \underbrace{\E_\textbf{X}\left[\prod_{k=1}^{2m}\left|\E_a\left[N_\lambda^{(k)}N_\lambda^{(l_k)}\right]\right|\right]}_{\Delta_\ell}.
\end{align*}
Now fix $\ell = (l_1,\ldots,l_{2m})$. Consider the following graph $G_\ell$ with vertices in $\lbrace 1,\ldots,2q\rbrace$: two vertices $k$ and $l$ are connected if the term $\left|\E_a\left[N_\lambda^{(k)}N_\lambda^{(l)}\right]\right|$ appears into the expression $\Delta_\ell$. If the graph $G_\ell$ is disconnected, we can use independence of the random variables $X_1,\ldots X_{2q}$, and we are left to show the aforementioned bound for connected graphs. Thanks to Weyl law, there is a constant $C$ such that
\[\left|\E_a\left[N_\lambda^{(k)}N_\lambda^{(l)}\right]\right| \leq C\|t\|^2,\]
and we can assume (up to bounding one of the terms in the product) that $G_\ell$ is a tree (with $2m-1$ edges). Suppose without loss of generality that $1$ is a leaf of the tree attached to $2$. Recalling the definition \eqref{eq:40} of $\eta(\lambda)$, one has
\begin{align*}
\Delta_\ell & \leq C\|t\|^2\, \E_{X_2,\ldots ,X_{2q}}\left[\E_{X_1}\left[\left|\E_a\left[N_\lambda^{(1)}N_\lambda^{(2)}\right]\right|\right]\prod_{\substack{(k,l)\in G_\ell \\ (k,l)\neq(1,2)}}\left|\E_a\left[N_\lambda^{(k)}N_\lambda^{(l)}\right]\right|\right]\\
&\leq C\|t\|^4O\left(\frac{\eta(\lambda)}{\lambda}\right)\, \E_{X_2,\ldots ,X_{2q}}\left[\prod_{\substack{(k,l)\in G_\ell \\ (k,l)\neq(1,2)}}\left|\E_a\left[N_\lambda^{(k)}N_\lambda^{(l)}\right]\right|\right],
\end{align*}
after the estimate \eqref{eq.lemme1}. Repeating the procedure leaf by leaf we obtain the bound
\[\E_\textbf{X}\left[\prod_{k=1}^{2m}\left|\E_a\left[N_\lambda^{(k)}N_\lambda^{(l_k)}\right]\right|\right] = \|t\|^{4m}O\left(\left(\frac{\eta(\lambda)}{\lambda}\right)^{2m-1}\right) = \|t\|^{4m}O\left(\left(\frac{\eta(\lambda)}{\lambda}\right)^m\right).\]
We used the fact that $2m-1\geq m$, with equality when $m=1$. That is, the worst case is attained for graphs $G_l$ such that their sets of edges forms partitions into pairs, for instance when $G_\ell = \lbrace(1,2),(3,4)\ldots,(2m-1,2m)\rbrace$.
\end{proof} 

\subsection{Proof of Theorem \ref{thm10}} \label{append.sobolev}

The proof of Theorem \ref{thm10} is rather technical, and relies on the following Sobolev injection for a smooth domain $\Omega$:
\[W^{d+1,1}(\Omega) \subset L_\infty(\Omega).\] 
It allows us to bound the supremum norm by the $W^{d+1,1}$ Sobolev norm, which is interchangeable with the expectation under $\PP_a$. We only detail the proof of the first assertion for simplicity. The second assertion is the generalization to the case $t\in\R^p$, and its proof follows the same lines. 
\begin{proof}[Proof of Theorem \ref{thm10}]
Let $B_K$ be a ball containing the compact $K$. Let $t\mapsto h(t)$ be a non-negative symmetric function, and non-increasing on $\R_+$. For any smooth function $f:B_K\times\R\rightarrow\R$ with $f(v,0) = 0$,
\begin{align*}
\sup_{v\in K}\sup_{t\in\R} h(t)|f(v,t)| &\leq \sup_{t\in\R} h(t) \int_{[0,t]}\sup_{v\in B_K}|\partial_t f(v,s)|\dd s\\
&\leq \sup_{t\in\R} \int_{[0,t]} h(s)\sup_{v\in B_K}|\partial_t f(v,s)|\dd s\\
&\leq C\int_{-\infty}^{+\infty} h(t)\|\partial_t f(v,t)\|_{W^{d+1,1}(B_K)}\dd t.\numberthis\label{eq:39}
\end{align*}
We set
\[f(v,t) = \left|\E_X\left[e^{itg_\lambda^X(v)}\right] - e^{-\frac{t^2}{2}}\right|^{2q}\quad\quad\text{and}\quad\quad h(t) = \frac{1}{\left(1+|t|^{2+\varepsilon}\right)^{2q}}.\]
in \eqref{eq:39}. By Fubini theorem,
\[\E_a\left[\sup_{v\in K}\sup_{t\in\R}h(t)f(v,t)\right]\leq C\sum_{|\alpha|\leq d+1}\int_{-\infty}^{+\infty}h(t)\int_{B_K}\E_a\left[|\partial_\alpha\partial_t f(v,t)|\right]\dd v\dd t.\numberthis\label{eq:9}\]
It remains to estimate the integrand. Using the derivative of the power function, we have
\[\partial_\alpha\partial_t f(v,t) = g(v,t)\left|\E_X\left[e^{itg_\lambda^X(v)}\right] - e^{-\frac{t^2}{2}}\right|^{2(q-d-2)},\]
for some function $g$ to be explicited. Using Cauchy--Schwarz inequality
\[\E_a\left[|\partial_\alpha\partial_t f(v,t)|\right] \leq \sqrt{\E_a[g(v,t)^2]}\;.\;\sqrt{\E_a\left[\left|\E_X\left[e^{itg_\lambda^X(v)}\right] - e^{-\frac{t^2}{2}}\right|^{4(q-d-2)}\right]}.\]
According to Theorem \ref{thm1}, there is a constant $C$ independent of $t$ and $\lambda$ such that
\begin{align*}
\sqrt{\E_a\left[\left|\E_X\left[e^{itg_\lambda^X(v)}\right] - e^{-\frac{t^2}{2}}\right|^{4(q-d-2)}\right]} &\leq C(1+|t|^{4(q-d-2)})\left(\frac{\eta(\lambda)}{\lambda}\right)^{q-d-2}.
\end{align*}
We will show that for some polynomial $P$ of degree $m$ independent of $q$,
\[\E_a[g(v,t)^2] \leq P(t).\numberthis \label{eq:10}\]
We will establish this fact in the end of the proof. Injecting this into the expression \eqref{eq:9}, we obtain
\[\E_a\left[\sup_{v\in K}\sup_{t\in\R}h(t)f(v,t)\right]\leq C\left(\frac{\eta(\lambda)}{\lambda}\right)^{q-d-2}\int_{-\infty}^{+\infty}\frac{(1+|t|^{\frac{m}{2}})(1+|t|^{4(q-d-2)})}{\left(1+|t|^{2+\varepsilon}\right)^{2q}}\dd t,\]
and for $q$ large enough, the integral is bounded. The end of the proof is the same as in the remark following Theorem \ref{thm1}. We have by Markov inequality and $q$ large enough,
\begin{align*}
\displaystyle{\PP_a\left(\sup_{t\in\R}\sup_{v\in K}\frac{\left|\E_X\left[e^{itg_\lambda^X(v)}\right] - e^{-\frac{t^2}{2}}\right|}{1+|t|^{2+\varepsilon}}>\lambda^{\varepsilon}\left(\frac{\eta(\lambda)}{\lambda}\right)^{1/2}\right)} &\leq \left(\frac{\lambda}{\eta(\lambda)\lambda^{2\varepsilon}}\right)^q\displaystyle{\E_a\left[\displaystyle{\sup_{v\in K}}\sup_{t\in\R}h(t)f(v,t)\right]} \\
\\
&\leq C \frac{1}{\lambda^{2q\varepsilon}}\left(\frac{\eta(\lambda)}{\lambda}\right)^{d+2}.\\
\end{align*}
For $q$ large enough the right-hand term is summable, and Borel--Cantelli lemma implies the existence a constant $C(\omega)$ depending only on $K$ and $\varepsilon$ such that
\[\left|\E_X\left[e^{itg_\lambda^X(v)}\right] - e^{-\frac{t^2}{2}}\right|\leq  C(\omega)(1+|t|^{2+\varepsilon})\frac{\sqrt{\eta(\lambda)}}{\lambda^{\frac{1}{2}-\varepsilon}}.\]
It remains to show the estimate \eqref{eq:10}. We have
\begin{align*}
\left|\E_X\left[e^{itg_\lambda^X(v)}\right] - e^{-\frac{t^2}{2}}\right|^{2q} = \E_{\textbf{X}}\left[\prod_{k=1}^{2q}\left(e^{\pm itg_\lambda^{X_k}(v)}-e^{-\frac{t^2}{2}}\right)\right].
\end{align*}
From this expression we deduce that
\[g(v,t) = \E_\textbf{X}\left[F\left(t,\left(\partial_\alpha g_\lambda^{X_k}\right)_{\substack{|\alpha|\leq d'\\1\leq k\leq 2q}}\right)\right],\]
with $F$ a function bounded by a polynomial of degree $2(d+2)$ in its arguments. But the partial derivatives of $g_\lambda$ are still Gaussian under $\PP_a$, and the local Weyl law (see also the expression \eqref{eq:11}) implies the existence of a universal constant $C$ such that
\[\E_a\left[\left(\partial_\alpha g_\lambda^{X_k}\right)^{2p}\right] = \frac{(2p)!}{2^pp!}\E_a\left[\left(\partial_\alpha g_\lambda^{X_k}\right)^2\right]^p \leq \frac{(2p)!}{2^pp!}C,\]
It implies that
\[\E_a\left[g(v,t)^2\right]\leq P(t),\]
for some polynomial $P$ in $t$ whose degree is independent of $q$.\jump

\end{proof}

\section{Proof of tightness estimates}\label{append.tight}

\begin{proof}[Proof of Lemma \ref{lemme3}]
Let $x,y\in\M$. We have
\begin{align*}
\E_a\left[\partial_\alpha g_\lambda^x(u)\,\partial_\alpha g_\lambda^y(v)\right] &= \frac{1}{K(\lambda)\lambda^{2|\alpha|}}\sum_{\lambda_n\leq \lambda}\left(\partial_\alpha\left(\varphi_n\circ\Phi_x\right)\left(\frac{u}{\lambda}\right)\right)\left(\partial_\alpha\left(\varphi_n\circ\Phi_y\right)\left(\frac{v}{\lambda}\right)\right).
\end{align*}
Setting
\[x_u = \Phi_x\left(\frac{u}{\lambda}\right)\quad\quad\text{and}\quad\quad y_v = \Phi_y\left(\frac{v}{\lambda}\right),\]
and using the fact that $\dd \exp_x = \Id$, we obtain{
\[
\E_a\left[\partial_\alpha g_\lambda^x(u)\,\partial_\alpha g_\lambda^y(v)\right] = \frac{1}{K(\lambda)\lambda^{2|\alpha|}}\left(\partial_{\alpha,\alpha} K_\lambda(x_u,y_v)\right) + O\left(\frac{1}{\lambda}\right).\]}
Recalling that the kernel $\mathcal{B}_d$ (resp. $\mathcal{S}_d$) is the $\CC^\infty$ scaling limit of the spectral projector we have
\[\E_a\left[\partial_\alpha g_\lambda^X(u)\,\partial_\alpha g_\lambda^Y(v)\right] = \partial_{\alpha,\alpha}\left[ \mathcal{B}_d(\lambda\,\dist(x_u,y_v))\right]+O\left(\frac{\eta(\lambda)}{\lambda}\right).\]
We briefly describe the $\CC^\infty$ extension of decorrelation estimates given by Lemma \ref{lemme1bis}. The proof is very similar and we refer to the proof of Lemma \ref{lemme1bis} for more details. Firstly, by the local Weyl law in the $\CC^\infty$ topology, we have uniformly on $x\in\M$ and $u\in B$,
\[\E_a\left[\left(\partial_\alpha g_\lambda^X(u)\right)^2\right] = C_\alpha + O\left(\frac{\eta(\lambda)}{\lambda}\right)\quad\text{and}\quad C_\alpha = \left.\partial_{\alpha,\alpha} \mathcal{B}_d(\|u-v\|)\right|_{u=v=0}.\numberthis \label{eq:11}\]
Secondly, take $X$ and $Y$ two independent uniform random variables on $\M$, and $k\geq 1$. As in Lemma \ref{lemme1bis}, we write
\begin{align*}
\E_X\left[\E_a[\partial_\alpha g_\lambda^X(u)\,\partial_\alpha g_\lambda^Y(v)]^{2k}\right] = I_1 + I_2,
\end{align*}
with
\begin{align*}
I_1=\int_{\dist(x,Y)>\varepsilon} \E_a[\partial_\alpha g_\lambda^X(u)\,\partial_\alpha g_\lambda^Y(v)]^{2k}\dd x \quad\text{and}\quad I_2 = \int_{\dist(x,Y)<\varepsilon}\E_a[\partial_\alpha g_\lambda^X(u)\,\partial_\alpha g_\lambda^Y(v)]^{2k}\dd x.
\end{align*}
Using a similar argument as in Lemma \ref{lemme1bis}, we deduce that uniformly on $u,v\in B$,
\[\E_X\left[\E_a[\partial_\alpha g_\lambda^X(u)\,\partial_\alpha g_\lambda^Y(v)]^{2k}\right] = O\left(\frac{\eta(\lambda)}{\lambda}\right).\numberthis\label{eq:30}\]
Define
\[W_X:u\mapsto \partial_\alpha g_\lambda^X(u)\quad\text{and}\quad W_Y:u\mapsto \partial_\alpha g_\lambda^Y(u).\]
The joint process $(W_X,W_Y)$ is Gaussian under $\PP_a$. We fix $u,v\in B$ and set
\[\rho(u,v) = \frac{\E_a[W_X(u)W_Y(v)]^2}{\E_a[W_X(u)^2]\E_a[W_Y(v)^2]}.\]
A direct Gaussian computation shows that
\[\E[W_X(u)^{2p}] = \frac{(2p)!}{2^pp!}\E\left[W_X(u)^2\right]^p\quad\text{and}\quad \E[W_Y(v)^{2p}] = \frac{(2p)!}{2^pp!}\E\left[W_Y(v)^2\right]^p,\]
and
\[\frac{\E[(W_X(u)W_Y(v))^{2p}]}{\E_a\left[W_X(u)^{2p}\right]\E_a\left[W_Y(v)^{2p}\right]} = \sum_{k=0}^p
\frac{\begin{pmatrix}
2p+2k \\ 
2p
\end{pmatrix}\begin{pmatrix}
2p \\ 
p+k
\end{pmatrix}}{\begin{pmatrix}
2p \\ 
p
\end{pmatrix} }
\rho(u,v)^k(1-\rho(u,v))^{p-k}:=Q_p(\rho(u,v)).\numberthis\label{eq:29}\]
From identity \eqref{eq:29} we compute
\[\E_a\left[\left(\E_X\left[\int_{B} W_X(u)^{2p}\dd u\right]-\frac{(2p)!}{2^pp!}(C_\alpha)^p\right)^2\right] = \left(\frac{(2p)!}{2^pp!}\right)^2(\Delta_1+\Delta_2),\numberthis\label{eq:31}\]
with
\[\Delta_1 := \left(\E_{X}\left[\int_{B}\E_a\left[W_X(u)^2\right]^p\dd u\right]-(C_\alpha)^p\right)^2,\]
and
\[\Delta_2 := \int_{B}\int_{B}\E_{X,Y}\left[\E_a\left[W_X(u)^2\right]^p\E_a\left[W_Y(v)^2\right]^p\left(Q_p(\rho(u,v))-1\right)\vphantom{\int}\right]\dd u\dd v.\]
From Equation \eqref{eq:11} we have
\[\Delta_1 = O\left(\left(\frac{\eta(\lambda)}{\lambda}\right)^2\right).\]
As for the term $\Delta_2$, we use the fact that $\E_a\left[W_X(u)^2\right]$ is bounded above and below by positive constants for $\lambda$ large enough, from equation \eqref{eq:11}. We develop the polynomial $Q_p$ and we use equation \eqref{eq:30} to obtain
\begin{align*}
\E_{X,Y}\left[\E_a\left[W_X(u)^2\right]^p\E_a\left[W_Y(v)^2\right]^p\left(Q_p(\rho(u,v))-1\right)\vphantom{\int}\right]&\leq C\,\E_{X,Y}\left[|Q_p(\rho(u,v))-1|\right]\\
&\leq C\E_{X,Y}\left[\sum_{k=1}^p |p_k| \rho(u,v)^k\right]\\
&= O\left(\frac{\eta(\lambda)}{\lambda}\right).
\end{align*}
Since the estimate is uniform on $u,v$ we deduce
\[\Delta_2 = O\left(\frac{\eta(\lambda)}{\lambda}\right).\]
Injecting this estimate into identity \eqref{eq:31} we obtain
\[\E_a\left[\left(\E_X\left[\int_{B} W_X(u)^{2p}\dd u\right]-\frac{(2p)!}{2^pp!}(C_\alpha)^p\right)^2\right] = \;O\left(\frac{\eta(\lambda)}{\lambda}\right),\]
The quantity inside the square is a polynomial of degree at most $2p$ in the Gaussian random variables $(a_n)_{n\geq0}$, and hence belongs to a finite fixed sum of Wiener chaos. The hypercontractivity property asserts that for such a polynomial, all the $L^q$ norms for $q\geq2$ are equivalents, which in our case implies that for every $q\geq 2$,
\[\E_a\left[\left(\E_X\left[\int_{B} W_X(u)^{2p}\dd u\right]-\frac{(2p)!}{2^pp!}(C_\alpha)^p\right)^q\right]   = O\left(\left(\frac{\eta(\lambda)}{\lambda}\right)^{q/2}\right).\]
For more details on Wiener chaos and hypercontractivity we refer the reader to the book \cite{No12}. Borel--Cantelli lemma implies the existence for every $\varepsilon>0$ of a constant $C(\omega)$ independent of $\lambda$ such that
\[\left|\E_X\left[\int_{B} W_X(u)^{2p}\dd u\right]-\frac{(2p)!}{2^pp!}(C_\alpha)^p\right| \leq C(\omega)\left(\frac{\eta(\lambda)}{\lambda}\right)^{1/2-\varepsilon},\]
which in turn implies the existence of a constant $\widetilde{C}(\omega)$ such that
\[\sup_{\lambda>0}\E_X\left[\int_{B} |\partial_\alpha g_\lambda^X(u)|^{2p}\dd u\right]\leq \widetilde{C}(\omega).\]
\end{proof}
\section{Proof of Wassertein estimates}\label{append.wass}

\begin{proof}[Proof of Lemma \ref{lemme8}]
Let $\phi$ be a function in $\mathcal{S}(\R)$, supported on the compact $[-(M+1),M+1]$. Using Plancherel isometry we have (the constant $C$ may change from line to line)
\begin{align*}
\left|\E[\phi(X_n)]-\E[\phi(X)]\right| &\leq \frac{1}{2\pi}\int_\R\left|\E\left[e^{itX_n}\right]-\E\left[e^{itX}]\right]\right||\widehat{\phi}(t)|\dd t\\
&\leq \frac{C}{n^\alpha}\int_\R (1+|t|^m)|\widehat{\phi}(t)|\dd t\\
&\leq \frac{C}{n^\alpha}\int_\R (1+|t|^{m+1})|\widehat{\phi}(t)|\frac{1}{1+|t|}\dd t\\
&\leq \frac{C}{n^\alpha}\int_\R |\widehat{\phi}(t)|\frac{\dd t}{1+|t|}\;\; +\;\; \frac{C}{n^{\alpha}}\int_\R |t|^{m+1}|\widehat{\phi}(t)|\frac{\dd t}{1+|t|}\\
&\leq \frac{C}{n^\alpha}\sqrt{\int_\R |\widehat{\phi}(t)|^2\dd t}\;\;+\;\;\frac{C}{n^\alpha}\sqrt{\int_\R |t|^{2m+2}|\widehat{\phi}(t)|^2\dd t}\quad\text{(Jensen)}\\
&\leq \frac{C}{n^\alpha}\|\phi\|_2 + \frac{C}{n^\alpha}\|\phi^{(m+1)}\|_2\quad\quad\quad\text{(Plancherel)}\\
&\leq C\frac{\sqrt{M+1}}{n^\alpha}\left(\|\phi\|_\infty + \|\phi^{(m+1)}\|_\infty\right).\numberthis\label{eq:34}
\end{align*}
By standard approximation argument, the inequality is true for every $\phi\in\CC^{m+1}(\R)$ with support in $[-(M+1),M+1]$. Suppose now that $\phi$ does not have compact support. Let $\chi_M$ a smooth function with support in $[-(M+1),M+1]$ such that $\chi_M = 1$ on $[-M,M]$. Set $\phi_M = \phi.\chi_M$. We write
\[\left|\E[\phi(X_n)]-\E[\phi(X)]\right| \leq \left|\E[\phi_M(X_n)]-\E[\phi_M(X)]\right| + \|\phi_\infty\|\PP(X_n>M) + \|\phi_\infty\|\PP(X>M).\]
From inequality \eqref{eq:34} and Markov inequality applied to the function $x\mapsto |x|^{\beta}$,
\[\left|\E[\phi(X_n)]-\E[\phi(X)]\right| \leq C\frac{\sqrt{M+1}}{n^\alpha}\left(\|\phi_M\|_\infty + \|\phi_M^{(m+1)}\|_\infty\right) + C\frac{\|\phi_\infty\|}{M^\beta}.\]
Using Leibniz rule, we have
\[\|\phi_M\|_\infty \leq \|\phi\|_\infty\quad\quad\text{and}\quad\quad\|\phi_M^{(m+1)}\|_\infty \leq C_m\sup_{k\leq m+1}\|\phi^{(k)}\|_\infty.\]
Choosing
\[M = n^{\frac{\alpha}{\beta+\frac{1}{2}}}\]
and under the requirement that $M>1$, we obtain
\[\left|\E[\phi(X_n)]-\E[\phi(X)]\right|\leq Cn^{-\frac{2\alpha\beta}{2\beta+1}}\sup_{k\leq m+1}\|\phi^{(k)}\|_\infty,\]
from which it follows that
\[\Wass_{(m+1)}(X_n,X) \leq Cn^{-\frac{2\alpha\beta}{2\beta+1}}.\]
\end{proof}
\section{Proof of the geometric lemma}\label{append.geom}
\begin{proof}[Proof of Lemma \ref{lemme12}]
Both sides are dimensionless and it suffices to prove the assertion for $a=1$. We can assume that $\HH^{d-1}(E\cap \partial D)=0$, else we could find a segment $S$ passing through one of the vertices and such that
$\HH^1(E\cap S\cap \partial D)>0$, and in that case the result is true.\jump

We will prove Lemma \ref{lemme12}  by a probabilistic method. We denote $(A_j)_{1\leq j\leq 2^d}$ the vertices of the cube. Let $P$ be a point chosen uniformly randomly on the cube $[0,1]^d$. Let $(A_jP)$ be the random line passing through the points $A_j$ and $P$. We will in fact prove that
\[\E\left[\sum_{j=1}^{2^d} \Card\left\lbrace E\cap (A_jP)\cap D\right\rbrace\right] \geq \frac{1}{2d}\HH^{d-1}(E\cap D), \numberthis \label{eq:19}\]
which implies the result, since for some realization of $P$ and some $j$ we must have
\[\Card(E\cap (A_jP)\cap D) \geq \frac{1}{2^{d+1}d}\HH^{d-1}(E\cap D).\]
Since we assumed that $\HH^{d-1}(E\cap \partial D)=0$ we can suppose that $E\subset \mathring{D}$. Since the manifold $E$ has bounded curvature, it is a doubling space and Vitali--Lebesgue covering theorem (see \cite[p.~4]{He01}) asserts that for all $r_0>0$, we can find a disjoint family of (relatively compact) geodesic balls $(E_{r_n})_{n\geq 0}$ in $E$ such that the geodesic ball $E_{r_n}$ has radius $r_n<r_0$, and such that
\[
\HH^{d-1}\left(E \setminus\left(\bigsqcup_{n\in \N} E_{r_n} \right)\right) = 0.
\]
By linearity of both sides of \eqref{eq:19} and monotone convergence, it is sufficient to prove the inequality \eqref{eq:19} by replacing $E$ with $E_r$, a small (relatively compact) geodesic ball of radius $r<r_0$ centered at some point $x\in E$.  For $r$ sufficiently small, the geodesic ball $E_r$ is comparable to a $\R^{d-1}$-ball. More precisely, set $B_r(x) = \exp^{-1}_x(E_r)$. Riemannian volume comparison theorems asserts that
\[\HH^{d-1}(B_r(x)) = \HH^{d-1}(E_r)(1+o(r_0)),\]
and the estimate is uniform on $E$ by the curvature bound assumption. We will prove that for some $j\in \lbrace1,\ldots,2^d\rbrace$,
\[\E\left[\sum_{j=1}^{2^d} \Card\left\lbrace E_r\cap (A_jP)\right\rbrace\right] \geq \E\left[\Card\left\lbrace E_r\cap (A_jP)\right\rbrace\right] \geq \frac{1}{2d}\HH^{d-1}(E_r).\]
Let $\textbf{n}_x$ denote a normal unit vector at $x$. A little geometry shows that we can choose $j$ such that
\[|\langle \overrightarrow{A_jx},\textbf{n}_x\rangle|\geq\frac{1}{2}.\numberthis \label{eq:20}\]
For $r_0$ small enough and uniformly on $E$, the hypersurface $E_r$ is almost flat and the line $(A_j P)$ has at most one point of intersection with $E_r$. The opposite would imply that for some $y\in E_r$ the line $(A_jy)$ is tangent to $E_r$ at some point $y$. That is $\langle \overrightarrow{A_jy},\textbf{n}_y\rangle = 0$. But it contradicts the inequality \eqref{eq:20} and the continuity on $E_r$ of the mapping
\[x\mapsto |\langle \overrightarrow{A_jx},\textbf{n}_x\rangle|.\]
The uniformity of $E$ comes from the fact that the modulus of continuity of this application is controlled by the curvature of $E$. We deduce
\[\E\left[\Card\left\lbrace E_r\cap (A_jP)\right\rbrace\right] = \PP\left(\Card\left\lbrace E_r \cap (A_jP)\right\rbrace \neq \void\right).\]
Uniformly on $x$ in $E$, we can find $r' = r+o(r_0)$ such that every line that passes through $A_j$ and intersects the $d-1$-dimensional ball $B_{r'}(x)\subset T_x E$, also passes through $E_r$. Indeed, the central projection of $E_r$ onto $T_x E$ with center of projection $A_j$ is almost a $\R^{d-1}$-ball and must contain a ball of radius $r' = r + o(r_0)$ (see Figure \ref{fig2}).\jump

\begin{figure}[ht]
\begin{center}
\begin{tikzpicture}[xscale=0.8, yscale=0.8]
    \draw  (0,0) -- (8,0);
    \draw  (0,8) -- (8,8);
    \draw  (0,8) -- (0,0);
    \draw  (8,0) -- (8,8);
    \draw (8,8) node[right]{$A_j$};
    \draw [line width=0.2mm,domain=40:90,samples=200] plot ({1+3*cos(\x)},{3*sin(\x)});
    \fill [black, domain=0:360] plot ({1+3*cos(65)+0.05*cos(\x)},{3*sin(65)+0.05*sin(\x)});
    \draw ({1+3*cos(65)},{3*sin(65)}) node[below]{$x$};
    \draw ({1+3*cos(65)+3*sin(65)},{3*sin(65)-3*cos(65)}) -- ({1+3*cos(65)-3*sin(65)},{3*sin(65)+3*cos(65)});
    \draw [dashed] (8,8) -- ({1+3*cos(40) + 0.32*(1+3*cos(40)-8)},{3*sin(40)+0.32*(3*sin(40)-8)});
    \draw [dashed] (8,8) -- ({1+3*cos(90)+ 0.15*(1+3*cos(90)-8)},{3*sin(90)+0.15*(3*sin(90)-8)});
    \draw [line width = 0.6mm] ({1+3*cos(65)+1.3*sin(65)},{3*sin(65)-1.3*cos(65)}) -- ({1+3*cos(65)-1*sin(65)},{3*sin(65)+1*cos(65)});
    \draw (4.6,1.9) node[right]{$T_xE$};
    \draw (2,3) node[right]{$B_{r'}(x)$};
    \draw (2.6,1.9) node[right]{$E_r$};
\end{tikzpicture}
\end{center}
\caption{Construction of $B_{r'}(x)$.} \label{fig2}
\end{figure}

\noindent We deduce
\[\PP\left(\Card\left\lbrace E_r \cap (A_jP)\right\rbrace \neq \void\right)\geq \PP\left(\Card\left\lbrace B_{r'}(x) \cap (A_jP)\right\rbrace \neq \void\right).\]
But the right hand side is easy to estimate. It is the volume of the cone in $[0,1]^d$, based at $A_j$ and generated by the ball $B_{r'}(x)$. The formula $\mathrm{base}\times\mathrm{height}/d$ gives
\begin{align*}
\PP\left(\Card\left\lbrace B_{r'}(x) \cap (A_jP)\right\rbrace \neq \void\right) &\geq \frac{\HH^{d-1}(B_{r'}(x))}{d}|\langle\overrightarrow{A_jx},\textbf{n}_x\rangle|\\
&\geq \frac{1}{2d}\HH^{d-1}(B_r(x))(1+o(r_0))\\
&\geq \frac{1}{2d}\HH^{d-1}(E_r)(1+o(r_0)).
\end{align*}
Patching up the above estimates we recover
\[\E\left[\sum_{j=1}^{2^d} \Card\left\lbrace E\cap (A_jP)\cap D\right\rbrace\right] \geq \frac{1}{2d}\HH^{d-1}(E\cap D)(1+o(r)),\]
and letting $r$ go to zero we deduce the result.
\end{proof}
\emergencystretch=1em
\printbibliography
\end{document}